\title{Beyond ECH capacities}
\author{Michael Hutchings\footnote{Partially supported by NSF grants DMS-1105820 and DMS-1406312.}}
\date{}
\newcommand{\mc}[1]{{\mathcal #1}}
\numberwithin{equation}{section}
\newtheorem{theorem}{Theorem}[section]
\newtheorem{proposition}[theorem]{Proposition}
\newtheorem{corollary}[theorem]{Corollary}
\newtheorem{lemma}[theorem]{Lemma}
\newtheorem{lemma-definition}[theorem]{Lemma-Definition}
\newtheorem{conjecture}[theorem]{Conjecture}
\theoremstyle{definition}
\newtheorem{definition}[theorem]{Definition}
\newtheorem{remark}[theorem]{Remark}
\newtheorem{example}[theorem]{Example}
\newcommand{\ceil}[1]{\left\lceil #1 \right\rceil}
\newcommand{\C}{{\mathbb C}}
\newcommand{\R}{{\mathbb R}}
\newcommand{\N}{{\mathbb N}}
\newcommand{\Z}{{\mathbb Z}}
\newcommand{\op}{\operatorname}
\newcommand{\Ker}{\op{Ker}}
\newcommand{\current}{\mathscr{C}}
\newcommand{\bpm}{\begin{pmatrix}}
\newcommand{\epm}{\end{pmatrix}}
\renewcommand{\epsilon}{\varepsilon}
\begin{document}

\setcounter{tocdepth}{2}

\maketitle

\begin{abstract}
ECH (embedded contact homology) capacities give obstructions to symplectically embedding one four-dimensional symplectic manifold with boundary into another. These obstructions are known to be sharp when the domain and target are ellipsoids (proved by McDuff), and more generally when the domain is a ``concave toric domain'' and the target is a ``convex toric domain'' (proved by Cristofaro-Gardiner). However ECH capacities often do not give sharp obstructions, for example in many cases when the domain is a polydisk. This paper uses more refined information from ECH to give stronger symplectic embedding obstructions when the domain is a polydisk, or more generally a convex toric domain. We use these new obstructions to reprove a result of Hind-Lisi on symplectic embeddings of a polydisk into a ball, and generalize this to obstruct some symplectic embeddings of a polydisk into an ellipsoid. We also obtain a new obstruction to symplectically embedding one polydisk into another, in particular proving the four-dimensional case of a conjecture of Schlenk. 
\end{abstract}

\section{Introduction}

\subsection{Some previous results}

This paper is concerned with the question of when one symplectic four-manifold with boundary can be symplectically embedded into another. An important class of examples of symplectic four-manifolds with boundary is constructed as follows: If $\Omega$ is a domain in the first quadrant of the plane, define the ``toric domain''
\[
X_\Omega = \left\{z\in\C^2\mid \left(\pi|z_1|^2,\pi|z_2|^2\right )\in\Omega\right\},
\]
with the restriction of the standard symplectic form
\begin{equation}
\label{eqn:omegastd}
\omega = \sum_{i=1}^2dx_idy_i
\end{equation}
on $\C^2$. For example, if $\Omega$ is the triangle with vertices $(0,0)$, $(a,0)$, and $(0,b)$, then $X_\Omega$ is the ellipsoid
\[
E(a,b) = \left\{z\in\C^2\;\bigg|\;\frac{\pi|z_1|^2}{a} + \frac{\pi|z_2|^2}{b} \le 1\right\}. 
\]
As a special case of this, we define the ball
\[
B(a) = E(a,a).
\]
If $\Omega$ is the rectangle with vertices $(0,0)$, $(a,0)$, $(0,b)$, and $(a,b)$, then $X_\Omega$ is the polydisk
\[
P(a,b) = \left\{z\in\C^2\;\big|\; \pi|z_1|^2\le a,\;\pi|z_2|^2\le b\right\}.
\]
It is already a quite subtle question when one four-dimensional ellipsoid or polydisk can be symplectically embedded into another.

In \cite{qech}, embedded contact homology (ECH) was used to define, for any symplectic four-manifold with boundary $X$, a sequence of real numbers
\[
0 = c_0(X) \le c_1(X) \le c_2(X) \le \cdots \le \infty
\]
called {\em ECH capacities\/}, such that if $X$ symplectically embeds into $X'$ then
\[
c_k(X) \le c_k(X')
\]
for all $k$. For example, by \cite[Prop.\ 1.2]{qech}, the ECH capacities of an ellipsoid are given by
\begin{equation}
\label{eqn:ckEab}
c_k(E(a,b)) = N_k(a,b)
\end{equation}
where $(N_0(a,b),N_1(a,b),\ldots)$ denotes the sequence of all nonnegative integer linear combinations of $a$ and $b$, that is real numbers $ma+nb$ where $m,n\in\N$, arranged in nondecreasing order (with repetitions). By \cite[Thm.\ 1.4]{qech}, the ECH capacities of a polydisk are given by
\begin{equation}
\label{eqn:ckPab}
c_k(P(a,b)) = \min \left\{am+bn \;\big|\; (m,n)\in\N^2,\;(m+1)(n+1)\ge k+1 \right\}.
\end{equation}
For more computations of ECH capacities see e.g.\ \cite[Thm.\ 4.14]{bn} and \cite[Thm.\ 1.21]{concave}.

McDuff \cite{mcd} showed that the open ellipsoid $\op{int}(E(a,b))$ symplectically embeds into $E(c,d)$ if and only if $N_k(a,b)\le N_k(c,d)$ for all $k$. Thus ECH capacities give a sharp obstruction to symplectically embedding one four-dimensional ellipsoid into another. (It is still a subtle number-theoretic problem to decide whether this embedding criterion holds for any given $a,b,c,d$, see e.g.\ \cite{ms,ck}.) A similar argument \cite[Cor.\ 11]{pnas}, using a result of Frenkel-M\"uller \cite[Prop.\ 1.4]{fm}, shows that ECH capacities give a sharp obstruction to symplectically embedding an ellipsoid into a polydisk in four dimensions.

More generally, Cristofaro-Gardiner \cite{dcg} has shown that ECH capacities give a sharp obstruction to symplectically embedding a ``concave toric domain'' into a ``convex toric domain''. Here we use the following terminology:

\begin{definition}
A toric domain $X_\Omega$ is {\em convex\/} if
\begin{equation}
\label{eqn:Omega}
\Omega = \{(x,y)\mid 0\le x\le A,\;  0\le y \le f(x)\}
\end{equation}
where $f:[0,A]\to \R^{\ge 0}$ is a nonincreasing\footnote{The result in \cite{dcg} applies to a more general notation of ``convex toric domain'' in which one only assumes that $f$ is concave and $f(0)>0$.} concave function. The toric domain $X_\Omega$ is {\em concave\/}  if $\Omega$ is given by \eqref{eqn:Omega} where $f:[0,A]\to\R^{\ge 0}$ is a convex function with $f(A)=0$.
\end{definition}

For example, a polydisk is a convex toric domain, where $f$ is constant. A toric domain is both convex and concave if and only if it is an ellipsoid, in which case $f$ is linear.

It turns out that ECH capacities sometimes do not give very good obstructions to symplectic embeddings of a convex toric domain into another symplectic manifold, such as a polydisk $P(a,1)$ into a ball $B(c)$. For example, ECH capacities imply that if $P(2,1)$ symplectically embeds into $B(c)$ 
then $c\ge 2$. However Hind-Lisi \cite{hl} showed that in fact, if $P(2,1)$ symplectically embeds into $B(c)$, then $c\ge 3$. Note that the converse is also true, because $P(a,1)$ trivially symplectically embeds into $B(a+1)$ by inclusion.

\begin{remark}
\label{rem:folding}
The Hind-Lisi result is optimal, in the sense that $2$ is the largest value of $a$ such that $P(a,1)$ symplectically embeds into $B(c)$ if and only if $c\ge a+1$. If $a>2$, then ``symplectic folding'' can be used to symplectically embed $P(a,1)$ into $B(c)$ whenever $c>2+a/2$, see \cite[Prop.\ 4.3.9]{schlenk}. When $a>6$, multiple symplectic folding can be used to construct even better symplecting embeddings of $P(a,1)$ into a ball (loc.\ cit.).
\end{remark}

\subsection{Some new results}

In this paper we introduce a new way to obstruct certain four-dimensional symplectic embeddings, using embedded contact homology in a more refined way than ECH capacities. While this method can be employed in a variety of situations, for concreteness we focus here on the problem of symplectically embedding one convex toric domain into another. In Theorem~\ref{thm:convex} below, we show that if such a symplectic embedding exists, then a certain combinatorial criterion must hold. The statement of this criterion is a bit complicated and postponed to \S\ref{sec:statement}. First, here are some applications of Theorem~\ref{thm:convex}.

To start, we can reprove the Hind-Lisi result and extend it to obstruct symplectic embeddings of some other polydisks into balls:

\begin{theorem}
\label{thm:polydiskball}
Let $a\ge 1$ and suppose that the polydisk $P(a,1)$ symplectically embeds into the ball $B(c)$. Then:
\begin{itemize}
\item If $1\le a\le 2$ then $c\ge a+1$.
\item If $2\le a\le 4$ then $c\ge (10+a)/4$.
\item If $4\le a\le 9/2$ then $c\ge 7/2$.
\item If $9/2\le a\le 7$ then $c\ge (13+a)/5$.
\item If $7\le a\le 8$ then $c\ge 4$.
\end{itemize}
\end{theorem}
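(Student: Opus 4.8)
The plan is to deduce Theorem~\ref{thm:polydiskball} as a corollary of Theorem~\ref{thm:convex}, by feeding in the special case where the domain is the polydisk $P(a,1)$ and the target is the ball $B(c) = E(c,c)$. So the first order of business is to write down explicitly what the combinatorial criterion of Theorem~\ref{thm:convex} says in this situation. The criterion will be phrased in terms of ECH generators for the boundaries $\partial P(a,1)$ and $\partial B(c)$: on the target side, $\partial B(c)$ is the boundary of a ball, whose ECH is generated by iterates of a single Reeb orbit, so the relevant data is essentially the ECH index grading, which grows quadratically; on the domain side, the boundary of a polydisk has Reeb orbits coming from the two ``edges'' together with the corner, and ECH generators are lists of these orbits with multiplicities satisfying suitable combinatorial constraints. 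The symplectic action of such a generator on the polydisk side is a nonnegative integer combination $am+n$, and the refined obstruction says that for each ECH grading $k$ there must exist generators on both sides, matched in grading, with the domain action bounded by the target action, plus an additional constraint coming from the ``$J_0$'' index (or whatever more refined invariant Theorem~\ref{thm:convex} uses) that rules out the extra embeddings which ECH capacities alone would permit.

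Next I would translate that criterion into a finite optimization problem: for the five stated ranges of $a$, I expect that each linear lower bound $c \ge (p + a)/q$ arises from a \emph{single} well-chosen ECH generator (or a short finite list of them) whose grading $k$ and action force $\operatorname{vol}$- or index-type inequality $\tfrac{1}{2}c^2 + (\text{lower order}) \ge c \cdot (am+n) - (\text{correction})$, which upon solving for $c$ gives exactly the claimed affine function of $a$. The breakpoints $a = 2, 4, 9/2, 7, 8$ should be precisely the values of $a$ at which the optimal generator switches — i.e., where two of these linear bounds cross — so part of the work is to identify the right generator in each window and check that it beats all the others there. Concretely I would: (i) set up the ECH index and action formulas for $\partial P(a,1)$ and $\partial B(c)$ from \eqref{eqn:ckPab} and \eqref{eqn:ckEab} and the definitions recalled in \S\ref{sec:statement}; (ii) for each candidate generator, extract the inequality Theorem~\ref{thm:convex} imposes and solve it for $c$ as a function of $a$; (iii) verify the endpoints, e.g.\ that at $a = 2$ one recovers $c \ge 3$ (the Hind-Lisi bound), at $a = 8$ one recovers $c \ge 4$, and that the pieces fit together continuously; and (iv) note the matching upper bounds are already known from inclusion and symplectic folding (Remark~\ref{rem:folding}), though the theorem as stated only claims the lower bounds.

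The main obstacle will be step (ii)–(iii): identifying, for each of the five ranges, the correct ECH generator on the polydisk side that produces the sharp obstruction, and then proving that no other generator in the range of relevant gradings gives a weaker constraint on $c$ than the claimed bound — that is, that the chosen generator is genuinely optimal and the combinatorial conditions of Theorem~\ref{thm:convex} are actually satisfiable at the claimed threshold (so the bound is not just an artifact of a bad choice). This is where the ``more refined information from ECH'' has to be used carefully: ECH capacities alone give only $c \ge a$ for $P(a,1) \hookrightarrow B(c)$ in the relevant range, so the improvement from $a$ to $a+1$ (and the subsequent piecewise-linear improvements) must come entirely from the extra constraint in Theorem~\ref{thm:convex}, and I will need to check that this extra constraint is the binding one for exactly the stated generator. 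I expect the computation to reduce to comparing a handful of quadratics in $c$ with linear coefficients in $a$, and the breakpoints to emerge as the abscissas where these quadratics' relevant roots coincide; once the right generators are pinned down, the remaining verification is routine algebra.
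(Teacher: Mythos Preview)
Your outline has the right starting point---apply Theorem~\ref{thm:convex} with domain $P(a,1)$ and target $B(c)$---but it leaves two genuine gaps, and one expectation is off.

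First, you never identify the generator on the \emph{target} side. The paper's proof does not search over generators for $\partial P(a,1)$; it fixes $\Lambda' = e_{1,1}^d$ for $B(c)$ (this is minimal for the ball by Lemma~\ref{lem:action}(a)) and then asks what $\Lambda$ on the polydisk side can satisfy $\Lambda \le_{P(a,1),B(c)} e_{1,1}^d$. The ``extra constraint'' you allude to is precisely condition~(iii) of Definition~\ref{def:le}: it forces $x(\Lambda)+y(\Lambda) \ge 3d-1$. Combined with the action inequality $x(\Lambda)+a\,y(\Lambda) \le dc$, this immediately yields $c \ge (3d-2+a)/d$ whenever $y(\Lambda)\ge 1$, and $c \ge (d+3)/2$ when $y(\Lambda)=0$ (since then $\Lambda = e_{1,0}^{d(d+3)/2}$). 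The five bullets of the theorem come from reading off $d=4$ and $d=5$; nothing quadratic in $c$ appears anywhere.

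Second, and more seriously, you do not address the factorization clause of Theorem~\ref{thm:convex}. The theorem does \emph{not} directly hand you a single $\Lambda$ with $\Lambda \le \Lambda'$; it gives factorizations $\Lambda = \Lambda_1\cdots\Lambda_n$ and $\Lambda' = \Lambda_1'\cdots\Lambda_n'$ with $\Lambda_i \le \Lambda_i'$ for each $i$. To get the bound for a given $d$, you must rule out $n>1$. The paper does this by an ECH-index count: if each $\Lambda_i' = e_{1,1}^{d_i}$ with $\sum d_i = d$ and $n>1$, and if $c$ is below the bounds coming from smaller $d'$, then each $\Lambda_i$ is forced to be $e_{1,0}^{d_i(d_i+3)/2}$, whence $I(\Lambda) = \sum d_i(d_i+3) < d(d+3) = I(\Lambda')$, a contradiction. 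Without this step your argument would only produce the weaker bound coming from $d=1$, namely $c \ge \min(1+a,2)$. Your plan as written contains no mechanism to exclude the factored case, so it cannot reach the stated conclusions for $d \ge 2$.
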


Of course, the third and fifth bullet points follow trivially from the ones above them. We have included the fifth bullet point because the bound $c\ge 4$ is significant up until $a=8$, at which point it ties the volume constraint
\[
a = \op{vol}(P(a,1)) \le \op{vol}(B(c)) = \frac{c^2}{2}.
\]
(The volume of $X_\Omega$ is the area of $\Omega$.)

Some improvement of Theorem~\ref{thm:polydiskball} is possible when $a>2$. For example, the following result shows that if $2\le a\le 12/5$, then one cannot do better than (the limit of) symplectic folding, see Remark~\ref{rem:folding}.

\begin{theorem}
\label{thm:folding}
If $2\le a\le 12/5$, and if $P(a,1)$ symplectically embeds into $B(c)$, then $c\ge 2+a/2$.
\end{theorem}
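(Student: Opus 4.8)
\emph{Proof strategy.} The plan is to deduce Theorem~\ref{thm:folding} from the combinatorial embedding criterion of Theorem~\ref{thm:convex}, applied to the hypothetical symplectic embedding $P(a,1)\hookrightarrow B(c)$; here the domain is the convex toric domain with $\Omega$ the square $[0,a]\times[0,1]$, and the target is $B(c)=E(c,c)$, the convex toric domain with $\Omega'$ the triangle of vertices $(0,0)$, $(c,0)$, $(0,c)$. The criterion says that such an embedding produces, for every suitable convex generator $\Lambda$ of the square, a companion convex generator $\Lambda'$ of the triangle with the same ECH index, with $\mathcal A_{\Omega'}(\Lambda')\ge\mathcal A_\Omega(\Lambda)$, and subject to the additional ``beyond ECH capacities'' constraint of Theorem~\ref{thm:convex} that ties the combinatorics of $\Lambda'$ to that of $\Lambda$ and in particular bounds the size of $\Lambda'$. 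The goal is to feed in a well-chosen family of test generators $\Lambda$ and show that this system of constraints is incompatible with $c<2+a/2$ precisely when $2\le a\le 12/5$. Since $2+a/2$ is in general not the action of a single lattice configuration, one expects to use a sequence $\Lambda=\Lambda_N$ with $x(\Lambda_N),y(\Lambda_N)\to\infty$ and to pass to the limit.

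Two elementary computations make the reduction concrete. For the square the action of a convex generator depends only on its total horizontal and vertical displacements, namely $\mathcal A_{P(a,1)}(\Lambda)=a\,y(\Lambda)+x(\Lambda)$, because the support function of a rectangle is piecewise linear and the action is additive over edges. For the triangle one gets $\mathcal A_{B(c)}(\Lambda')=c\,\mu(\Lambda')$, where $\mu(\Lambda')$ is the sum over the edges $(h,-v)$ of $\Lambda'$ of $\max(h,v)$. Thus the action inequality reads $a\,y(\Lambda)+x(\Lambda)\le c\,\mu(\Lambda')$, and to conclude it suffices to choose the test generators so that every admissible companion $\Lambda'$ satisfies $\mu(\Lambda')\le\big(a\,y(\Lambda)+x(\Lambda)\big)/(2+a/2)$; then $c\ge\big(a\,y(\Lambda)+x(\Lambda)\big)/\mu(\Lambda')\ge 2+a/2$. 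The test generators $\Lambda$ should be near-staircases whose slope and lattice width are modeled on the symplectic folding picture that fills $B(c)$ for every $c>2+a/2$, so that the bound comes out sharp against folding on the stated range of $a$.

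The crux is the extremal combinatorial estimate: one must show that the index matching $I(\Lambda')=I(\Lambda)$, together with the refined domination constraint of Theorem~\ref{thm:convex}, genuinely forces $\mu(\Lambda')$ to be small. This is exactly where one must go beyond ECH capacities, since the plain capacity comparison (which ignores the refined constraint) gives, for $a$ in the relevant range, only a bound close to the volume bound $c\ge\sqrt{2a}$, and $\sqrt{2a}<2+a/2$ for all $a>0$. Carrying out this optimization over all convex generators $\Lambda'$ of the triangle of a prescribed index, while simultaneously optimizing the choice of $\Lambda_N$, is the main work and the place I expect the argument to be delicate. I also expect the threshold $12/5$ to emerge from this optimization: for $a>12/5$ the near-staircase test family ceases to be extremal, a different family of convex generators---of the type underlying the later bullets of Theorem~\ref{thm:polydiskball}---then yields a smaller slope, so the hypothesis $a\le 12/5$ is precisely what keeps the folding configuration optimal. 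As consistency checks, at $a=2$ the estimate should specialize to $c\ge 3$, recovering the Hind--Lisi bound and the first bullet of Theorem~\ref{thm:polydiskball}, and at $a=12/5$ it should give $c\ge 16/5$, strictly stronger than the bound $c\ge 31/10$ obtained from Theorem~\ref{thm:polydiskball}.
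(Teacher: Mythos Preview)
Your proposal misreads Theorem~\ref{thm:convex} in a way that undermines the whole strategy. In that theorem the input is a convex generator $\Lambda'$ which is \emph{minimal for the target} $X_{\Omega'}$ (here the ball), and the output is the existence of some $\Lambda$ on the domain side (the polydisk) together with compatible factorizations. You have the quantifiers reversed: you propose to ``feed in a well-chosen family of test generators $\Lambda$'' on the polydisk and then constrain the resulting $\Lambda'$ on the ball. Theorem~\ref{thm:convex} does not let you do that; you do not get to choose the polydisk generator, and the theorem only asserts existence of one such $\Lambda$, not a constraint for all of them. Consequently your plan to bound $\mu(\Lambda')$ from above by choosing $\Lambda$ cleverly does not get off the ground.

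The paper's proof goes in the correct direction and is much more concrete than what you outline. One fixes the single generator $\Lambda'=e_{1,1}^{9}$ for the ball (minimal by Lemma~\ref{lem:action}(a)), assumes $c<2+a/2$, and derives a contradiction. No limiting family is needed. The argument first shows that for $d\le 9$, any $\Lambda\le_{P(a,1),B(c)} e_{1,1}^{d}$ must have $y(\Lambda)\le 1$ (this is exactly where $2\le a\le 12/5$ is used: the borderline case $d=9$ gives $a>12/5$ if $y(\Lambda)\ge 2$), and must contain a factor $e_{1,0}$. One then checks directly that no $\Lambda$ satisfies $\Lambda\le e_{1,1}^{9}$. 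The contradiction is completed using the second and third bullets of Theorem~\ref{thm:convex} to rule out nontrivial factorizations $\Lambda'=\prod_i\Lambda_i'$: since every $\Lambda_i$ contains $e_{1,0}$, the ``no common elliptic orbit'' condition forces all $(\Lambda_i,\Lambda_i')$ to coincide, and the remaining cases $n=3,9$ fail the ECH index matching. Your sketch neither identifies a specific $\Lambda'$, nor uses the factorization structure, nor carries out the combinatorics that actually produces the threshold $12/5$; the sentence ``carrying out this optimization\ldots is the main work'' is precisely where the proof is supposed to be.
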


More calculation remains to be done to optimize the lower bound on $c$ when $a>12/5$.

We also obtain a sharp obstruction to symplectic embeddings of certain polydisks into integral ellipsoids. Note that $P(a,1)$ trivially embeds into $E(bc,c)$ by inclusion whenever $a+b\le bc$. In some cases the converse is true:

\begin{theorem}
\label{thm:polydiskellipsoid}
Let $1\le a\le 2$ and let $b$ be a positive integer. Then the polydisk $P(a,1)$ symplectically embeds into the ellipsoid $E(bc,c)$ if and only if $a+b\le bc$.
\end{theorem}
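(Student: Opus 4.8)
\emph{Proof strategy.} The ``if'' direction is an inclusion: if $a+b\le bc$, then for every $z\in P(a,1)$ we have $\frac{\pi|z_1|^2}{bc}+\frac{\pi|z_2|^2}{c}\le\frac{a}{bc}+\frac1c=\frac{a+b}{bc}\le 1$, so $P(a,1)\subseteq E(bc,c)$ and inclusion is a symplectic embedding. For the converse, suppose $P(a,1)$ symplectically embeds into $E(bc,c)$; we must show $c\ge 1+a/b$. When $b=1$ this is precisely the first bullet of Theorem~\ref{thm:polydiskball}, the Hind--Lisi bound $c\ge a+1$ for $1\le a\le 2$, so the plan is to run the proof of that bound with the ball $B(c)$ replaced by the ellipsoid $E(bc,c)$, which is still a convex toric domain, and to track how the integer $b$ enters. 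Concretely, one applies the combinatorial criterion of Theorem~\ref{thm:convex}: the embedding forces that for a suitable convex generator $\Lambda$ associated to $P(a,1)$ there is a convex generator $\Lambda'$ associated to $E(bc,c)$ whose ECH index matches that of $\Lambda$, whose symplectic action dominates, i.e.\ $\mc{A}_{E(bc,c)}(\Lambda')\ge\mc{A}_{P(a,1)}(\Lambda)$, and whose ``$J_0$'' complexity is controlled by that of $\Lambda$.

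The substance is the choice of $\Lambda$ and the analysis of the resulting $\Lambda'$. As in Theorem~\ref{thm:polydiskball}, one takes $\Lambda$ from a one-parameter family $\Lambda_d$ ($d\to\infty$) of convex generators of $\partial P(a,1)$ chosen so that $\mc{A}_{P(a,1)}(\Lambda_d)$ and $I(\Lambda_d)$ grow in the proportion dictated by the critical value $c=1+a/b$, while the $J_0$ complexity of $\Lambda_d$ stays bounded (the relevant $\Lambda_d$ ``hug'' the coordinate axes, so the lattice region they bound has few interior lattice points). This boundedness of $J_0$ is what takes the argument beyond ECH capacities: it forbids $\Lambda'$ from being one of the ``efficient'' near-diagonal generators that realize the ECH capacities of $E(bc,c)$, and forces $\Lambda'$ instead to be, up to lower-order terms, a ``product'' generator $\gamma_1^{m}\gamma_2^{n}$ built from the two elliptic Reeb orbits of $\partial E(bc,c)$, with $m$ and $n$ determined by matching the ECH index of $\Lambda_d$ --- here the integrality of $b$ is used, since it makes $\{bi+j : i,j\in\N\}=\N$ with explicitly computable multiplicities, hence makes $I(\gamma_1^m\gamma_2^n)$ computable. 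For such a generator $\mc{A}_{E(bc,c)}(\gamma_1^m\gamma_2^n)=c(bm+n)$, so the action inequality reads $c(bm+n)\ge\mc{A}_{P(a,1)}(\Lambda_d)$, and letting $d\to\infty$ gives $c\ge 1+a/b$, matching the inclusion bound. The hypothesis $1\le a\le 2$ keeps the optimal slope of $\Lambda_d$ in the range where no other target generator survives the index and $J_0$ constraints.

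The step I expect to be the main obstacle is exactly this last classification. Theorem~\ref{thm:convex} only asserts the existence of \emph{some} $\Lambda'$, so to extract a sharp bound one must rule out \emph{every} convex generator of $\partial E(bc,c)$ that meets the index and $J_0$ constraints yet has larger symplectic action than the expected product generator --- the ``long and thin'' generators, which have small ECH index but which the $J_0$ inequality should exclude. This reduces to a lattice-geometric estimate about convex lattice paths and their ECH index and $J_0$, combined with some number theory governing which integer combinations of $b$ and $1$ occur; as always in arguments of this kind (compare the proofs of Theorems~\ref{thm:polydiskball} and~\ref{thm:folding}), the delicate point is to control the error terms well enough that the limit $d\to\infty$ produces the exact constant $1+a/b$ rather than a weaker bound, and the endpoint cases $a=1$, $a=2$ together with the bookkeeping of edges lying on the coordinate axes need a little extra care. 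Once this is in place, the action/index comparison and the passage to the limit are routine.
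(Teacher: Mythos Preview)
Your proposal reverses the logical direction of Theorem~\ref{thm:convex}. In that theorem one \emph{chooses} a convex generator $\Lambda'$ which is minimal for the \emph{target} $X_{\Omega'}$ (here $E(bc,c)$), and the theorem then asserts the \emph{existence} of a source generator $\Lambda$ (together with factorizations) satisfying the three conditions of Definition~\ref{def:le}. You instead propose to pick a family $\Lambda_d$ of generators for the source $P(a,1)$ and then classify the possible target generators $\Lambda'$; Theorem~\ref{thm:convex} gives no such statement, and the ``main obstacle'' you describe --- ruling out all target generators of large action --- is not the problem one actually faces. The real task is the opposite: having fixed $\Lambda'$, bound from below $A_{P(a,1)}(\Lambda)$ over \emph{all} admissible source generators $\Lambda$ and factorizations.

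The paper's proof does exactly this with a single, finite choice: by Lemma~\ref{lem:action}(a), $e_{b,1}^d$ is minimal for $E(bc,c)$ for every positive integer $d$, and one takes $\Lambda'=e_{b,1}^2$. One first shows (Step~1) that $\Lambda\le_{P(a,1),E(bc,c)} e_{b,1}$ forces $\Lambda=e_{1,0}^{b+1}$ under the contradiction hypothesis $a+b>bc$; since $I\big(e_{1,0}^{2(b+1)}\big)=4b+4\neq 6b+4=I(e_{b,1}^2)$, any nontrivial factorization of $\Lambda'=e_{b,1}^2$ is excluded, so $n=1$ and $\Lambda\le e_{b,1}^2$. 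Condition~(iii) then gives $x(\Lambda)+y(\Lambda)\ge 2b+3$, and a two-case analysis ($y(\Lambda)>0$ versus $y(\Lambda)=0$) combined with $a+b>bc$ yields $a>2$. No limit $d\to\infty$ is needed, and the relevant target generators are the single-edge paths $e_{b,1}^d$ parallel to the ellipsoid's defining line, not the ``product'' generators $\gamma_1^m\gamma_2^n$ you describe. The integrality of $b$ enters only to make $e_{b,1}$ a lattice edge, not through multiplicities of $\{bi+j\}$.
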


We also obtain a sharp obstruction to certain symplectic embeddings of one polydisk into another. Suppose that $a\ge b>0$ and $a'\ge b'>0$. Observe that $P(a,b)$ trivially symplectically embeds into $P(a',b')$ if $a\le a'$ and $b\le b'$. One can ask when the converse holds.
The first ECH capacity of $P(a,b)$, along with many other symplectic capacities, equals $b$. Thus if $P(a,b)$ symplectically embeds into $P(a',b')$, then $b \le b'$. In some cases we can also show that $a\le a'$:

\begin{theorem}
\label{thm:polydisk}
Let $a,b,c$ be real numbers with $a,b\ge 1$ and $c > 0$. Suppose that $P(a,1)$ symplectically embeds into $P(bc,c)$. Assume that
\begin{equation}
\label{eqn:polydiskassumption}
\frac{2b}{a}\ge 1 + \frac{b-1}{4\ceil{b}-1}.
\end{equation}
Then $a\le bc$.
\end{theorem}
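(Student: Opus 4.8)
The plan is to derive Theorem~\ref{thm:polydisk} as a special case of the main combinatorial obstruction, Theorem~\ref{thm:convex}, applied to the convex toric domains $P(a,1)$ and $P(bc,c)$. Since the precise statement of that obstruction is postponed to \S\ref{sec:statement}, I would organize the argument around what such an obstruction must look like: it assigns to the target convex toric domain a collection of ``ECH-type'' functionals, indexed by some combinatorial data (essentially lattice paths, or sequences of edge vectors, encoding generators of embedded contact homology of the boundary), and asserts that for each admissible index the corresponding action on the domain is bounded by the action on the target. Concretely, for polydisks the relevant ECH generators correspond to lattice points and the associated quantities are the familiar ones appearing in \eqref{eqn:ckPab}, but refined by keeping track of the full sequence of elliptic/hyperbolic orbit multiplicities rather than just the total ECH index. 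So the first step is to specialize Theorem~\ref{thm:convex} to the pair $(P(a,1), P(bc,c))$ and extract an explicit family of inequalities in the parameters $a,b,c$.

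The second step is to choose the combinatorial index cleverly to produce the sharpest inequality. Guided by the shape of assumption \eqref{eqn:polydiskassumption} — the appearance of $\lceil b\rceil$ and of the fraction $(b-1)/(4\lceil b\rceil - 1)$ strongly suggests a lattice path on the target whose length scales with $\lceil b\rceil$ — I would test the obstruction on a path that winds roughly $\lceil b\rceil$ times in the ``short'' direction of $P(bc,c)$ for each pass in the ``long'' direction, i.e. an index whose defining integer vector has slope tied to $b$. The expected outcome is an inequality of the schematic form
\[
\alpha(k)\, a \;\le\; \beta(k)\, bc \;+\; \gamma(k)\, c
\]
for a well-chosen integer $k$ depending on $\lceil b\rceil$, where $\alpha,\beta,\gamma$ are explicit (the $\beta bc$ term coming from the ``$P(bc,\cdot)$'' factor and the $\gamma c$ term from the ``$\cdot\,c$'' factor and from the correction terms in the refined ECH index). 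I would then verify that the hypothesis \eqref{eqn:polydiskassumption} is exactly the condition under which this particular inequality forces $a\le bc$ rather than some weaker bound — in other words, \eqref{eqn:polydiskassumption} should be precisely the regime where the chosen combinatorial index dominates both the crude volume bound and the competing indices.

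The third step is bookkeeping: one must check that the chosen index is actually admissible in the sense of Theorem~\ref{thm:convex} (the lattice-path convexity/length conditions, and whatever parity or ECH-index normalization enters), compute the two action numbers exactly, and then do the elementary algebra converting the resulting inequality, together with \eqref{eqn:polydiskassumption}, into the conclusion $a\le bc$. It may be cleanest to split into cases according to whether $b$ is an integer, since $\lceil b\rceil = b$ simplifies the correction term and probably gives the borderline case of the inequality.

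The main obstacle I anticipate is the second step: identifying the single combinatorial index that yields the bound with the stated sharp constant. The refined ECH obstruction produces an infinite family of inequalities, and most of them are far from optimal; pinning down which lattice path on $P(bc,c)$ is extremal — and proving it is extremal in the range \eqref{eqn:polydiskassumption} rather than just producing \emph{some} improvement over ECH capacities — is where the real work lies, and it presumably requires the same kind of asymptotic/combinatorial estimate (comparing sums like $\sum \lfloor \text{something}\rfloor$ against quadratic volume-type quantities) that drives the proofs of Theorems~\ref{thm:polydiskball} and~\ref{thm:folding}. Once the right index is in hand, verifying admissibility and finishing the algebra should be routine.
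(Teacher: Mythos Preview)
Your proposal is a plan rather than a proof, and the plan underestimates the structure of Theorem~\ref{thm:convex} in a way that would leave a genuine gap. The obstruction is not a family of inequalities indexed by a ``combinatorial index'' that you get to choose; rather, you must first find a convex generator $\Lambda'$ that is \emph{minimal} for the target $P(bc,c)$ (Definition~1.13 and Lemma~\ref{lem:action}(b)), and then Theorem~\ref{thm:convex} hands you not a single $\Lambda$ with $\Lambda\le_{\Omega,\Omega'}\Lambda'$ but a \emph{factorization} $\Lambda'=\Lambda_1'\cdots\Lambda_n'$ and $\Lambda=\Lambda_1\cdots\Lambda_n$ with $\Lambda_i\le_{\Omega,\Omega'}\Lambda_i'$ for each $i$. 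Most of the work in the paper's proof is ruling out $n>1$, and your plan does not anticipate this at all.

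Concretely, the paper takes $\Lambda'=e_{1,0}^{d}e_{0,1}^{2}$ with $d=4\ceil{b}-2$ (an L-shaped path, not a ``winding'' path), checks it is minimal for $P(bc,c)$, and then spends two preliminary steps showing: (1) if $\Lambda\le e_{1,0}^{d'}$ or $\Lambda\le e_{0,1}^{d'}$ then $d'=1$ and $\Lambda=e_{1,0}$; (2) if $\Lambda\le e_{1,0}^{d'}e_{0,1}^{k}$ with $d',k>0$ then $y(\Lambda)<k$. These use condition~(iii) of Definition~\ref{def:le} (the $x+y$ inequality coming from $J_0$ and nonnegativity of genus), which is the whole point of going ``beyond ECH capacities'' and which your sketch does not invoke. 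With (1) and (2) in hand, one argues that any nontrivial factorization of $\Lambda'$ forces all $y(\Lambda_i)=0$, hence $I(\Lambda)=\sum I(\Lambda_i)=\sum I(\Lambda_i')<I(\Lambda')$, a contradiction; so $n=1$. Only then does one get a single relation $\Lambda\le e_{1,0}^{d}e_{0,1}^{2}$, and combining the action inequality with $x(\Lambda)+y(\Lambda)\ge 2d+3$ yields $c(d+2b)\ge\min(3d+2,\,2d+2+a)$; plugging in $d=4\ceil{b}-2$ and the assumption $c<a/b$ contradicts~\eqref{eqn:polydiskassumption}. Your schematic inequality $\alpha(k)a\le\beta(k)bc+\gamma(k)c$ does not capture this, because the action inequality alone (condition~(ii)) is the ECH-capacity information and is too weak; the extra leverage comes from pairing it with condition~(iii), and from first eliminating factorizations.
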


For example, when $b=1$, we obtain:

\begin{corollary}
\label{cor:schlenkold}
If $1\le a \le 2$ and if $P(a,1)$ symplectically embeds into $P(c,c)$, then $a\le c$.
\end{corollary}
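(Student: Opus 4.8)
The plan is to obtain Corollary~\ref{cor:schlenkold} as the special case $b=1$ of Theorem~\ref{thm:polydisk}, so that the entire proof consists of checking that the hypotheses of that theorem are met. First I would record that the standing assumptions $a,b\ge 1$ and $c>0$ of Theorem~\ref{thm:polydisk} hold: we are given $a\ge 1$ and $c>0$, and we are taking $b=1\ge 1$. Also $P(bc,c)=P(1\cdot c,c)=P(c,c)$, so the hypothesis ``$P(a,1)$ symplectically embeds into $P(bc,c)$'' is exactly our hypothesis that $P(a,1)$ symplectically embeds into $P(c,c)$.

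The only substantive point is the numerical inequality \eqref{eqn:polydiskassumption}. Substituting $b=1$, its right-hand side becomes $1+\tfrac{1-1}{4\ceil{1}-1}=1+0=1$, so \eqref{eqn:polydiskassumption} reduces to $2/a\ge 1$, that is, $a\le 2$. This is precisely the upper bound in the hypothesis $1\le a\le 2$ of the corollary. Hence Theorem~\ref{thm:polydisk} applies and gives $a\le bc=c$, which is the desired conclusion. (Note there is no hidden constraint on the size of $c$: once we know $a\le c$ and $a\ge 1$ we automatically get $c\ge 1$, so the statement is consistent and needs no separate lower bound on $c$.)

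I do not expect any genuine obstacle in this argument: the corollary is a direct specialization, and all of the real work is carried out in Theorem~\ref{thm:polydisk}, which in turn relies on the combinatorial criterion of Theorem~\ref{thm:convex}. The one thing to be careful about is simply evaluating the fraction in \eqref{eqn:polydiskassumption} correctly at $b=1$ so that it collapses to the already-assumed bound $a\le 2$. I would also add a sentence of context: since $P(a,1)$ embeds into $P(c,c)$ by inclusion whenever $a\le c$, the bound $a\le c$ is sharp, so for $1\le a\le 2$ one obtains the clean criterion that $P(a,1)$ symplectically embeds into $P(c,c)$ if and only if $a\le c$ — the four-dimensional case of Schlenk's conjecture mentioned in the abstract.
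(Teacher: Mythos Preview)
Your proposal is correct and matches the paper's approach exactly: the paper simply remarks that the corollary is the case $b=1$ of Theorem~\ref{thm:polydisk}, and your verification that \eqref{eqn:polydiskassumption} reduces to $a\le 2$ when $b=1$ is the only thing to check.
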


Corollary~\ref{cor:schlenkold} is the four-dimensional case of \cite[Conj.\ 3.10]{schlenkold}. Note that if $a>2$, then one can use symplectic folding to show that $P(a,1)$ symplectically embeds into $P(c,c)$ whenever $c>1+a/2$, see \cite[Prop.\ 4.4.4]{schlenk}. For $a>4$, better symplectic embeddings are possible, see \cite[Fig.\ 7.2]{schlenk}.

\begin{remark}
Assuming a technical conjecture, we can weaken the assumption \eqref{eqn:polydiskassumption} to $a\le 2b$, see Remark~\ref{rem:improve2}. The resulting improved version of Theorem~\ref{thm:polydisk} can then be restated as follows: If $P(a,b)$ symplectically embeds into $P(a',b')$ with $a\ge b$ and $a'\ge b'$, and if
\[
\frac{a}{b}\le 2 \frac{a'}{b'},
\]
then
$a\le a'$.
\end{remark}

The above theorems are just a few simple applications of Theorem~\ref{thm:convex}. Much remains to be explored to see what Theorem~\ref{thm:convex} implies about other symplectic embeddings.

\subsection{Symplectic embeddings of convex toric domains}
\label{sec:statement}

We now prepare to state Theorem~\ref{thm:convex}, which gives a general obstruction to symplectically embedding one four-dimensional convex toric domain into another. Some of the following definitions are analogues of related definitions in \cite[\S1.6]{concave} for concave toric domains.

\begin{definition}
\label{def:cip}
A {\em convex integral path\/} is a path $\Lambda$ in the plane such that:
\begin{itemize}
\item The endpoints of $\Lambda$ are $(0,y(\Lambda))$ and $(x(\Lambda),0)$ where $x(\Lambda)$ and $y(\Lambda)$ are nonnegative integers.
\item
$\Lambda$ is the graph of a piecewise linear concave function $f:[0,x(\Lambda)]\to[0,y(\Lambda)]$ with $f'(0)\le 0$, possibly together with a vertical line segment at the right.
\item The vertices of $\Lambda$ (the points at which its slope changes) are lattice points.
\end{itemize}
\end{definition}

\begin{definition}
A {\em convex generator\/} is a convex integral path $\Lambda$ such that:
\begin{itemize}
\item Each edge of $\Lambda$ (line segment between vertices) is labeled `$e$' or `$h$'.
\item Horizontal and vertical edges can only be labeled `$e$'.
\end{itemize}
\end{definition}

The following notation for convex generators is useful: If $a$ and $b$ are relatively prime nonnegative integers, and if $m$ is a positive integer, then $e_{a,b}^m$ denotes an edge whose displacement vector is $(ma,-mb)$, labeled `$e$'; $h_{a,b}$ denotes an edge with displacement vector $(a,-b)$, labeled `$h$'; and if $m>1$ then $e_{a,b}^{m-1}h_{a,b}$ denotes an edge with displacement vector $(ma,-mb)$, labeled `$h$'. A convex generator is then equivalent to a commutative formal product of symbols $e_{a,b}$ and $h_{a,b}$, where no factor $h_{a,b}$ may be repeated, and the symbols $h_{1,0}$ and $h_{0,1}$ may not be used. The equivalence sends a convex generator to the product over all of its edges of the corresponding factors.

The idea of the above definition is that, as we will see in \S\ref{sec:prove}, the boundary of any convex toric domain can be perturbed such that for its induced contact form, up to large symplectic action, the generators of the ECH chain complex correspond to convex generators. (The relevant notions from ECH are reviewed in \S\ref{sec:reviewECH} below.) Their ECH index and (approximate) symplectic action are then described as follows:

\begin{definition}
If $\Lambda$ is a convex generator, define its {\em ECH index\/} by
\begin{equation}
\label{eqn:ILambda}
I(\Lambda) = 2(L(\Lambda) -1) - h(\Lambda),
\end{equation}
where $L(\Lambda)$ denotes the number of lattice points in the region enclosed by $\Lambda$ and the axes (including lattice points on the boundary); and 
$h(\Lambda)$ denotes the number of edges of $\Lambda$ that are labeled `$h$'.
\end{definition}

\begin{example}
Every convex generator $\Lambda$ has $I(\Lambda)\ge 0$. The following are all the convex generators with $I\le 6$:
\begin{itemize}
\item The unique convex generator with $I=0$ is the formal product $1$. (The path $\Lambda$ has no edges and starts and ends at $(0,0)$.) There are no convex generators with $I=1$.
\item $I=2$: $e_{1,0}$ and $e_{0,1}$.
\item $I=3$: $h_{1,1}$.
\item $I=4$: $e_{1,0}^2$, $e_{1,1}$, and $e_{0,1}^2$.
\item $I=5$: $h_{2,1}$ and $h_{1,2}$.
\item $I=6$: $e_{1,0}^3$, $e_{2,1}$, $e_{1,0}e_{0,1}$, $e_{1,2}$, and $e_{0,1}^3$.
\end{itemize}
\end{example}

\begin{definition}
If $\Lambda$ is a convex generator and $X_\Omega$ is a convex toric domain, define the {\em symplectic action\/} of $\Lambda$ with respect to $X_\Omega$ by
\begin{equation}
\label{eqn:ALambda}
A_\Omega(\Lambda) = A_{X_\Omega}(\Lambda) = \sum_{\nu\in \op{Edges}(\Lambda)}\vec{\nu}\times p_{\Omega,\nu}.
\end{equation}
Here, if $\nu$ is an edge of $\Lambda$, then $\vec{\nu}$ denotes the vector given by the lower right endpoint of $\nu$ minus the upper left endpoint.
Also $p_{\Omega,\nu}$ denotes a point on the tangent line\footnote{Here a ``tangent line'' to $\partial\Omega$ is a line $\eta$ touching $\partial\Omega$ such that all of $\Omega$ is in the closed half plane to the lower left of $\eta$.} to $\partial\Omega$ parallel to $\nu$. Finally, $\times$ denotes the determinant of a pair of 2-component vectors.
\end{definition}

\begin{example}
\label{ex:action}
\begin{itemize}
\item
If $X_\Omega$ is the polydisk $P(a,b)$, then
\[
A_{P(a,b)}(\Lambda) = bx(\Lambda) + ay(\Lambda).
\]
\item
If $X_\Omega$ is the ellipsoid $E(a,b)$, then $A_{E(a,b)}(\Lambda)=c$, where the line $bx+ay=c$ is tangent to $\Lambda$.
\end{itemize}
\end{example}

\begin{definition}
Let $X_\Omega$ be a convex toric domain. We say that a convex generator $\Lambda$ with $I(\Lambda)=2k$ is {\em minimal\/} for $X_\Omega$ if:
\begin{description}
\item{(a)}
All edges of $\Lambda$ are labeled `$e$'.
\item{(b)}
 $\Lambda$ uniquely minimizes $A_\Omega$ among convex generators with $I=2k$ and all edges labeled `$e$' (or equivalently among convex integral paths with $L=k+1$).
\end{description}
\end{definition}

\begin{remark}
It follows from Proposition~\ref{prop:dan} below that if $I(\Lambda)=2k$ and $\Lambda$ is minimal for $X_\Omega$, then $A_\Omega(\Lambda)=c_k(X_\Omega)$.
\end{remark}

If $\Lambda$ is a convex generator, let $m(\Lambda)$ denote the total multiplicity of all edges of $\Lambda$, i.e.\ the number of lattice points on the path $\Lambda$ minus one, or equivalently the total exponent of all factors $e_{a,b}$ and $h_{a,b}$ in the corresponding formal product.

\begin{definition}
\label{def:le}
Let $\Lambda,\Lambda'$ be convex generators such that all edges of $\Lambda'$ are labeled `$e$', and let $X_\Omega,X_{\Omega'}$ be convex toric domains. We write $\Lambda\le_{X_\Omega,X_{\Omega'}}\Lambda'$, or $\Lambda\le_{\Omega,\Omega'}\Lambda'$ for short, if the following three conditions hold:
\begin{description}
\item{(i)} $I(\Lambda) = I(\Lambda')$.
\item{(ii)} $A_\Omega(\Lambda)\le A_{\Omega'}(\Lambda')$.
\item{(iii)} $x(\Lambda) + y(\Lambda) - h(\Lambda)/2 \ge x(\Lambda') + y(\Lambda') + m(\Lambda') - 1$.
\end{description}
\end{definition}

The idea of this definition is that, as we will see in \S\ref{sec:prove}, if $X_\Omega$ symplectically embeds into $X_{\Omega'}$, then in the resulting cobordism between their (perturbed) boundaries, $\Lambda\le_{\Omega,\Omega'}\Lambda'$ is a necessary condition for the existence of an embedded irreducible holomorphic curve with ECH index zero between the ECH generators corresponding to $\Lambda$ and $\Lambda'$. The inequality (iii) is the key ingredient that allows us to go beyond ECH capacities, and arises from the fact that every holomorphic curve must have nonnegative genus, cf.\ Proposition~\ref{prop:J0}.

\begin{definition}
  Let $\Lambda_1$ and $\Lambda_2$ be convex generators. We say that $\Lambda_1$ and $\Lambda_2$ ``have no elliptic orbit in common'' if, when we write $\Lambda_1$ and $\Lambda_2$ as formal products, no factor $e_{a,b}$ appears in both $\Lambda_1$ and $\Lambda_2$.  Similarly, we say that $\Lambda_1$ and $\Lambda_2$ ``have no hyperbolic orbit in common'' if no factor $h_{a,b}$ appears in both of the formal products corresponding to $\Lambda_1$ and $\Lambda_2$.  \end{definition}

If $\Lambda_1$ and $\Lambda_2$ are convex generators with no hyperbolic orbit in common, then we define their ``product'' $\Lambda_1\Lambda_2$ 
by concatenating the formal products of symbols $e_{a,b}$ and $h_{a,b}$ corresponding to $\Lambda_1$ and $\Lambda_2$. This product operation on convex generators is associative (when defined).

\begin{theorem}
\label{thm:convex}
Let $X_\Omega$ and $X_{\Omega'}$ be convex toric domains. Suppose there exists a symplectic embedding $X_\Omega\to X_{\Omega'}$. Let $\Lambda'$ be a convex generator which is minimal for $X_{\Omega'}$.
Then there exist a convex generator $\Lambda$ with $I(\Lambda)=I(\Lambda')$, a nonnegative integer $n$, and product decompositions $\Lambda=\Lambda_1\cdots\Lambda_n$ and $\Lambda'=\Lambda_1'\cdots\Lambda_n'$, such that:
\begin{itemize}
\item
$
\Lambda_i\le_{\Omega,\Omega'}\Lambda_i'
$
for each $i=1,\ldots,n$.
\item
Given $i,j\in\{1,\ldots,n\}$, if $\Lambda_i\neq \Lambda_j$ or $\Lambda_i'\neq\Lambda_j'$, then $\Lambda_i$ and $\Lambda_j$ have no elliptic orbit in common.
\item
If $S$ is any subset of $\{1,\ldots,n\}$, then $I\left(\prod_{i\in S}\Lambda_i\right) = I\left(\prod_{i\in S}\Lambda_i'\right)$.
\end{itemize}
\end{theorem}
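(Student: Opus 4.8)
The plan is to deduce Theorem~\ref{thm:convex} from the ECH machinery applied to the weakly exact cobordism determined by the symplectic embedding $X_\Omega\to X_{\Omega'}$, together with the structural properties of $J_0$-index zero holomorphic curves. First I would perturb the boundaries $\partial X_\Omega$ and $\partial X_{\Omega'}$ to nondegenerate contact forms whose Reeb orbits (up to large symplectic action) are indexed by convex generators, with ECH index given by \eqref{eqn:ILambda} and symplectic action approximated by \eqref{eqn:ALambda}; this is the reduction promised in \S\ref{sec:prove} and I would invoke it as a black box. Since $\Lambda'$ is minimal for $X_{\Omega'}$, its corresponding ECH generator is the unique one of minimal action with $I=2k$, hence (after the perturbation) represents a nonzero class in $ECH$ that is detected by the $U$-map: it is the ``distinguished'' generator at that index level, and in particular the ECH cobordism map, or more precisely Cristofaro-Gardiner's refinement using the fact that ECH capacities are sharp for concave-into-convex together with the structure of the cobordism chain map, forces the existence of a (broken) holomorphic current from the ECH generator of $X_\Omega$ at index $2k$ to $\Lambda'$.

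Next I would extract from this a genuine embedded irreducible ECH-index-zero holomorphic curve, or rather a disjoint union of them, in the completed cobordism. Here the key point is that a holomorphic current of ECH index zero between ECH generators in an exact cobordism decomposes into embedded irreducible pieces each of ECH index zero, and for each such piece the relative adjunction formula plus the nonnegativity of the genus gives the inequality $J_0\ge 0$; writing this out in terms of the combinatorial data of the convex generators at the two ends yields precisely condition~(iii) of Definition~\ref{def:le}, while conditions (i) and (ii) are automatic from $I=0$ of the cobordism curve and from the action-decreasing property of holomorphic curves in an exact cobordism. The components of the current give the product decomposition $\Lambda=\Lambda_1\cdots\Lambda_n$, $\Lambda'=\Lambda_1'\cdots\Lambda_n'$: each component $i$ is asymptotic at its positive end to the orbit set $\Lambda_i'$ and at its negative end to $\Lambda_i$, and distinctness of two components forces their positive ends to share no elliptic Reeb orbit (two distinct embedded curves can share a hyperbolic orbit covered once but not an elliptic orbit, since multiply-covered elliptic ends are forced by positivity of intersection to belong to a single component) — this is the second bullet. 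The third bullet, additivity of $I$ over subsets, follows because the ECH index is additive under taking unions of currents asymptotic to disjoint orbit sets and the index-zero condition is preserved on each sub-union.

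The main obstacle I expect is the passage from the algebraic input (nontriviality of the ECH class of $\Lambda'$ and the cobordism map) to the geometric conclusion that an \emph{honest, embedded, index-zero} curve — not merely a broken building or a current with multiply covered components — connects the two specified generators, and moreover that \emph{all} of its pieces are index zero rather than some having positive and some negative index (the latter being a priori possible only if the total is zero). Controlling this requires the sharpness results of McDuff and Cristofaro-Gardiner to pin down the ECH generator at the lower end of the cobordism as exactly the minimal convex generator $\Lambda$ for $X_\Omega$ with $I=2k$, so that the action bound \eqref{eqn:ALambda} is saturated and no room is left for intermediate levels in a broken configuration; combined with the index inequality $I\ge 2\cdot(\text{ind})$ for somewhere-injective curves, this squeezes every piece to index zero. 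I would also need to handle the perturbation carefully so that the combinatorial quantities $L$, $h$, $x$, $y$, $m$ appearing in \eqref{eqn:ILambda} and Definition~\ref{def:le} genuinely match the Conley–Zehnder and writhe contributions computed from the perturbed Reeb dynamics; this bookkeeping, while routine in spirit, is where the inequality~(iii) acquires its exact form and is the technical heart of going ``beyond ECH capacities.''
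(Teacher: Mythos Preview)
Your outline has the right architecture (perturb, cobordism map, extract a current, read off the combinatorics from $J_0$), but it skips the one genuinely new ingredient in the paper and in its place inserts a claim that is false in general.

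The gap is in your second paragraph, where you assert that ``a holomorphic current of ECH index zero between ECH generators in an exact cobordism decomposes into embedded irreducible pieces each of ECH index zero.'' This is exactly what fails for cobordisms: unlike in a symplectization (Proposition~\ref{prop:lowI}), a cobordism can support multiply covered components with $I<0$, so an $I=0$ current need not split into $I=0$ pieces, and one cannot even rule out extra symplectization levels in the broken building. The paper's entire \S\ref{sec:newECH} is devoted to this: it introduces the notion of an \emph{$L$-tame} cobordism (Definition~\ref{def:Ltame}) and proves Proposition~\ref{prop:key}, which says that in an $L$-tame cobordism the desired decomposition \emph{does} hold. Step~2 of the actual proof then verifies $L$-tameness for this specific cobordism, using crucially that after the perturbation of Lemma~\ref{lem:perturbation} every elliptic orbit on the \emph{concave} (negative) end is $L$-positive, which forces $e_L(C)>0$ unless $\alpha_-=\emptyset$, and then $I(dC)>0$ by the combinatorial index formula. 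Your proposal never mentions $L$-positivity or tameness, and your fallback --- invoking McDuff/Cristofaro-Gardiner sharpness to ``pin down the ECH generator at the lower end'' and thereby leave ``no room'' for breaking --- does not work: $\Lambda$ is not asserted to be minimal for $X_\Omega$, the sharpness results go in the wrong direction (concave into convex), and in any case action bounds alone cannot prevent negative-index covers.

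A second, related gap is your justification of the second bullet. It is not true that two distinct embedded index-zero curves in a cobordism are automatically forbidden from sharing an elliptic asymptote; the paper obtains this from Proposition~\ref{prop:5.1}(b), which again depends on the $L$-positive/$L$-negative dichotomy through linking and writhe estimates. Likewise, the third bullet is not mere additivity of $I$ (which fails for currents whose asymptotics overlap) but comes from the third bullet of Proposition~\ref{prop:key}(b). Finally, a small orientation slip: the cobordism goes from $\partial X_{\Omega'}$ to $\partial X_\Omega$, so the current runs from $\imath(\Lambda')$ to $\imath(\Lambda)$, not the reverse.
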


\begin{remark}
\label{rem:improve}
We expect that in Theorem~\ref{thm:convex}, instead of assuming that $\Lambda'$ is minimal for $X_{\Omega'}$, it is enough to assume only that all edges of $\Lambda'$ are labeled `$e$'. As explained in Appendix~\ref{app:conjecture}, this would follow from a conjectural description of the differential on the ECH chain complex for the (perturbed) boundaries of convex toric domains.
\end{remark}

\begin{remark}
For most of the applications in this paper (Theorems~\ref{thm:polydiskball}, \ref{thm:polydiskellipsoid} and \ref{thm:polydisk}), we do not need the last two bullets in Theorem~\ref{thm:convex}.
\end{remark}

\begin{remark}
A corollary of Theorem~\ref{thm:convex} is that under the assumptions of the theorem, there exists a convex generator $\Lambda$ with $I(\Lambda)=I(\Lambda')$, $A_\Omega(\Lambda)\le A_{\Omega'}(\Lambda')$, and $x(\Lambda)+y(\Lambda) \ge x(\Lambda') + y(\Lambda')$. The resulting symplectic embedding obstructions are sometimes stronger than those given by ECH capacities, but are generally weaker than the obstructions obtained using the full force of Theorem~\ref{thm:convex}.
\end{remark}

\begin{remark}
\label{rem:variants}
Variants of Theorem~\ref{thm:convex} hold for symplectic embeddings of convex or concave toric domains into concave toric domains. We do not have a version of Theorem~\ref{thm:convex} for symplectic embeddings of concave toric domains into convex toric domains, but this is exactly the case where Cristofaro-Gardiner has shown that ECH capacities already give sharp symplectic embedding obstructions.
\end{remark}

The idea of the proof of Theorem~\ref{thm:convex} is that if $X_\Omega$ symplectically embeds into the interior of $X_{\Omega'}$, then there is a ``weakly exact'' symplectic cobordism from the boundary of $X_{\Omega'}$ to the boundary of $X_\Omega$ (when these boundaries are smooth). After perturbing these boundaries to nondegenerate contact three-manifolds, there is a cobordism map on embedded contact homology (ECH). Nontriviality of this cobordism map implies the existence of certain holomorphic curves in the (completed) cobordism. The key new ingredient here is that the particular cobordism above satisfies a ``tameness'' condition which allows us to control the holomorphic curves that might arise and rule out certain troublesome multiple covers. Carefully studying these holomorphic curves and encoding relevant topological information about them combinatorially then leads to the conclusions of Theorem~\ref{thm:convex}.

The main reason why the above cobordism satisfies the tameness condition is that the short elliptic Reeb orbits in the perturbed boundary of $X_\Omega$ have slightly positive rotation angle, see \S\ref{sec:deftame}. We would also have tameness if the short elliptic orbits in the perturbed boundary of $X_{\Omega'}$ had slightly negative rotation angle, which is why Theorem~\ref{thm:convex} has analogues for symplectic embeddings into concave toric domains, as mentioned in Remark~\ref{rem:variants}.

The rest of this paper is organized as follows. In \S\ref{sec:applying} we use Theorem~\ref{thm:convex} and some combinatorial calculations to deduce the applications in Theorems~\ref{thm:polydiskball}, \ref{thm:folding}, \ref{thm:polydiskellipsoid}, and \ref{thm:polydisk}. After this, it just remains to prove Theorem~\ref{thm:convex}. In \S\ref{sec:reviewECH} we review the relevant aspects of ECH. In \S\ref{sec:newECH} we introduce the crucial notion of ``$L$-tame'' symplectic cobordisms, and we prove a new result about ECH (Proposition~\ref{prop:key}) which controls the behavior of multiply covered holomorphic curves in such cobordisms. In \S\ref{sec:perturb} we study the ECH of the boundary of a (perturbed) convex domain in detail. In \S\ref{sec:prove} we put all of the above together to prove Theorem~\ref{thm:convex}. Finally, Appendix~\ref{app:conjecture} briefly discusses a conjectural improvement of Theorem~\ref{thm:convex}.

\section{Calculations using the main theorem}
\label{sec:applying}

We now use Theorem~\ref{thm:convex} as a ``black box'' to deduce Theorems~\ref{thm:polydiskball}, \ref{thm:folding}, \ref{thm:polydiskellipsoid}, and \ref{thm:polydisk} on symplectic embeddings of polydisks. The proof of Theorem~\ref{thm:convex} begins in \S\ref{sec:reviewECH}.

\subsection{Minimal convex generators for ellipsoids and polydisks}

We first need to clarify when the hypotheses of Theorem~\ref{thm:convex} are satisfied.

\begin{lemma}
\label{lem:action}
Fix $a,b>0$.
\begin{description}
\item{(a)} Let $\eta$ be a line of slope $-b/a$ which goes through a lattice point $(x,y)$ in the first quadrant. Let $\Lambda$ be the maximal convex integral path for which $\eta$ is a tangent line, with all edges labeled `$e$'. Then $\Lambda$ is minimal for $E(a,b)$.
\item{(b)}
Let $x,y$ be nonnegative integers. Suppose that
\[
bx'+ay'>bx+ay
\]
whenever $(x',y')$ is a different pair of nonnegative integers with 
\[
(x'+1)(y'+1)\ge (x+1)(y+1).
\]
Then $\Lambda = e_{1,0}^xe_{0,1}^y$ is minimal for $P(a,b)$.
\end{description}
\end{lemma}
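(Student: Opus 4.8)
The plan is to unwind the definition of ``minimal'' in both parts by translating everything into elementary lattice-point combinatorics, using the action formulas from Example~\ref{ex:action} and the identity $I(\Lambda)=2(L(\Lambda)-1)$ for convex generators all of whose edges are labeled `$e$'. Throughout, recall that a convex generator with all edges `$e$' is just a convex integral path $\Lambda$, and $I(\Lambda)=2k$ is equivalent to $L(\Lambda)=k+1$, where $L(\Lambda)$ counts the lattice points in the region bounded by $\Lambda$ and the two axes. So in both parts the task is: among all convex integral paths enclosing exactly $k+1$ lattice points, show that the specified $\Lambda$ is the unique minimizer of the relevant action functional.

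For part~(a): by Example~\ref{ex:action}, $A_{E(a,b)}(\Lambda)$ is the value $c$ for which the line $bx+ay=c$ is tangent to $\Lambda$ (i.e.\ the smallest $c$ with $\Lambda$ weakly below $bx+ay=c$). First I would observe that $\Lambda$ as defined — the maximal convex integral path having the given line $\eta\colon bx+ay=c_0$ (where $c_0=bx+ay$ for the given lattice point) as a tangent line — has $A_{E(a,b)}(\Lambda)=c_0$ by construction. Next, the set of lattice points enclosed by $\Lambda$ and the axes is exactly $\{(i,j)\in\N^2 : bi+aj\le c_0\}$; this is what ``maximal with tangent line $\eta$'' means. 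So if $\Lambda''$ is any other convex integral path with $A_{E(a,b)}(\Lambda'')\le c_0$, then every lattice point enclosed by $\Lambda''$ satisfies $bi+aj\le c_0$, hence is enclosed by $\Lambda$, giving $L(\Lambda'')\le L(\Lambda)$; and if also $A_{E(a,b)}(\Lambda'')\le c_0-\epsilon$ for the minimal such path at level $L=L(\Lambda)$, one derives a contradiction since shrinking the tangent line strictly would drop a lattice point (because $(x,y)$ itself lies on $\eta$). Uniqueness follows because $\Lambda$ is the unique maximal convex integral path whose enclosed lattice-point set is a given ``lower-left-closed'' finite set: the upper boundary of the convex hull of such a set, truncated appropriately, is determined. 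I expect this to be mostly a matter of writing the definitions carefully rather than a genuine obstacle.

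For part~(b): by Example~\ref{ex:action}, $A_{P(a,b)}(\Lambda)=bx(\Lambda)+ay(\Lambda)$, and for $\Lambda=e_{1,0}^xe_{0,1}^y$ we have $x(\Lambda)=x$, $y(\Lambda)=y$, and $L(\Lambda)=(x+1)(y+1)$. We must show that among convex integral paths $\Lambda'$ with $L(\Lambda')\ge (x+1)(y+1)$ — equivalently $I(\Lambda')\ge 2k$ where $k+1=(x+1)(y+1)$ — the path $e_{1,0}^xe_{0,1}^y$ uniquely minimizes $bx(\Lambda')+ay(\Lambda')$ among those with $I(\Lambda')=2k$; but by monotonicity it suffices to compare against all convex integral paths with $L\ge(x+1)(y+1)$. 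The key geometric input is that any convex integral path $\Lambda'$ with corner coordinates $x(\Lambda')=x'$, $y(\Lambda')=y'$ encloses at most $(x'+1)(y'+1)$ lattice points — indeed the enclosed lattice points all lie in the rectangle $[0,x']\times[0,y']$. Hence $L(\Lambda')\ge (x+1)(y+1)$ forces $(x'+1)(y'+1)\ge (x+1)(y+1)$, and then the hypothesis on $(x,y)$ gives $bx'+ay' > bx+ay$ unless $(x',y')=(x,y)$. The remaining point is that when $(x',y')=(x,y)$, the \emph{only} convex integral path with those endpoint coordinates and enclosing all $(x+1)(y+1)$ lattice points of the rectangle is the ``staircase'' $e_{1,0}^xe_{0,1}^y$ itself — any path that cuts a corner of the rectangle loses a lattice point — which gives uniqueness. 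I expect the main obstacle, such as it is, to be the bookkeeping in this last uniqueness step: carefully arguing that a convex integral path from $(0,y)$ to $(x,0)$ enclosing the full rectangle must have exactly the two axis-parallel edges, and ruling out the degenerate possibility that some path with strictly larger $x'$ or $y'$ but the same action ties the minimum (excluded precisely by the strict inequality in the hypothesis).
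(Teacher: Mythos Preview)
Your proposal is correct and follows essentially the same approach as the paper's proof: for (a) you compare tangent lines of slope $-b/a$ and use that $\Lambda$ encloses exactly the lattice points weakly below $\eta$, and for (b) you use the rectangle bound $L(\Lambda')\le (x(\Lambda')+1)(y(\Lambda')+1)$ together with the hypothesis, with uniqueness coming from the fact that equality forces $\Lambda'=e_{1,0}^{x'}e_{0,1}^{y'}$. The paper's write-up is slightly terser (it phrases (a) directly rather than contrapositively, and dispatches the uniqueness in (b) in one clause), but the substance is the same.
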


\begin{proof}
(a) This is similar to \cite[Ex.\ 1.23]{concave}. 
If $\Lambda'$ is any other convex generator with $L(\Lambda')=L(\Lambda)$ and all edges labeled `$e$', let $\eta'$ be the tangent line to $\Lambda'$ of slope $-b/a$.
Then $\eta'$ must be to the upper right of (and not equal to) $\eta$, since otherwise we would have $L(\Lambda)\le L(\Lambda')$, with equality only if $\Lambda'=\Lambda$. It now follows from Example~\ref{ex:action} that $A_{E(a,b)}(\Lambda')>A_{E(a,b)}(\Lambda)$.

(b) We have $L(\Lambda)=k+1$ where $k=(x+1)(y+1)-1$. If $\Lambda'$ is any other convex generator with all edges labeled `$e$' and $L(\Lambda')=k+1$, then since $\Lambda'$ is contained in the rectangle $[0,x(\Lambda')]\times [0,y(\Lambda')]$, it follows that $(x(\Lambda')+1)(y(\Lambda')+1) \ge k+1$, with equality only if $\Lambda'=e_{1,0}^{x(\Lambda')}e_{0,1}^{y(\Lambda')}$. So by the hypothesis and Example~\ref{ex:action}, we have $A_{P(a,b)}(\Lambda') \ge A_{P(a,b)}(\Lambda)$, with equality only if $(x,y)=(x',y')$, in which case $\Lambda'=\Lambda$.
\end{proof}

\subsection{Symplectic embeddings of polydisks into balls}

We now prove Theorem~\ref{thm:polydiskball}.

\begin{lemma}
Let $a\ge 1$ and suppose that $P(a,1)$ symplectically embeds into $B(c)$. Then for every positive integer $d$ we have
\begin{equation}
\label{eqn:y1bound}
c \ge \min\left(1+a,\frac{3d-2+a}{d},\frac{d+3}{2}\right).
\end{equation}
\end{lemma}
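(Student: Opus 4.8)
The plan is to apply Theorem~\ref{thm:convex} with $X_\Omega = P(a,1)$ and $X_{\Omega'} = B(c)$, choosing a well-chosen minimal convex generator $\Lambda'$ for $B(c)$ depending on the parameter $d$, and then extracting the inequality \eqref{eqn:y1bound} from the three conditions (i)--(iii) defining $\Lambda\le_{\Omega,\Omega'}\Lambda_i'$ together with the bookkeeping about ECH indices. Since $B(c)=E(c,c)$ is an ellipsoid, Lemma~\ref{lem:action}(a) tells us exactly which convex generators are minimal: take a line of slope $-1$ through a lattice point $(x,y)$ in the first quadrant, and the maximal convex integral path tangent to it, with all edges labeled `$e$'. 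The natural candidate to produce a bound involving $d$ is the ``staircase'' generator associated to the line $x+y=d$; concretely I would take $\Lambda' = e_{1,0}^d$ or a triangular staircase with $x(\Lambda')+y(\Lambda') \approx d$ and $L(\Lambda') \approx$ (a quadratic in $d$), and compute $I(\Lambda') = 2(L(\Lambda')-1)$ and $A_{B(c)}(\Lambda') = cd$ via Example~\ref{ex:action}.

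Next, for the domain side, I would use Example~\ref{ex:action} for the polydisk: $A_{P(a,1)}(\Lambda) = x(\Lambda) + a\,y(\Lambda)$. The three possible ``outcomes'' in the min correspond to the three natural possibilities for how the product decomposition $\Lambda = \Lambda_1\cdots\Lambda_n$ and $\Lambda'=\Lambda_1'\cdots\Lambda_n'$ can look. Roughly: (1) the trivial inclusion bound $c\ge 1+a$ (which corresponds to a degenerate or $n$-large case); (2) the case where one factor $\Lambda_i$ has a single `$h$' edge or a controlled amount of $h$-labeling, forcing the genus/index correction in (iii) to give $\frac{3d-2+a}{d}$; (3) the case where one factor is essentially the whole generator and (iii) reads $x(\Lambda) + y(\Lambda) - h(\Lambda)/2 \ge x(\Lambda') + y(\Lambda') + m(\Lambda') - 1 \approx 2d-1$, which combined with $I(\Lambda)=I(\Lambda')$ and $A_{P(a,1)}(\Lambda)\le cd$ and the constraint $a\ge 1$ yields $\frac{d+3}{2}$. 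In each branch, the index equality (i) pins down $L(\Lambda)$, hence (being inside a rectangle by Lemma~\ref{lem:action}(b)-type reasoning, or by the isoperimetric-type estimate on convex paths) constrains $x(\Lambda)$, $y(\Lambda)$, $h(\Lambda)$; then (ii) gives the action inequality and (iii) the key extra inequality. Combining these three scalar inequalities and minimizing/casing over how $\Lambda$'s ``shape parameters'' can be distributed produces exactly the three-term minimum.

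The main obstacle I anticipate is the combinatorial optimization in condition (iii): one must show that \emph{no} convex generator $\Lambda$ satisfying (i) and (ii) can violate all three lower bounds for $c$ simultaneously. This requires an argument that for a convex integral path with a prescribed number $L$ of enclosed lattice points, the quantity $x(\Lambda)+y(\Lambda)-h(\Lambda)/2$ cannot be too large relative to its $P(a,1)$-action $x(\Lambda)+a\,y(\Lambda)$ — essentially a discrete isoperimetric/area-versus-perimeter estimate for lattice polygons, complicated by the $h$-edge correction term. The cleanest route is probably: first reduce to the all-`$e$' case by noting that replacing an `$h$' edge by `$e$' changes $I$ by $+1$ and $x+y-h/2$ by $+1/2$ in a controlled way, then handle the all-`$e$' case by comparing $\Lambda$ against the minimal (staircase) generator of the same $L$ for $P(a,1)$ as identified in Lemma~\ref{lem:action}(b), and finally track how the product-decomposition constraints (the second and third bullets of Theorem~\ref{thm:convex}) restrict which mixed-$h$ configurations are actually allowed. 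I would expect the $\frac{3d-2+a}{d}$ term to be the delicate one, arising from a factor $\Lambda_i$ of the form $e_{1,0}^{d-1}h_{1,1}$ or similar against a matching piece of the staircase $\Lambda_i'$, and verifying that (i)--(iii) hold exactly with this choice — and that alternatives do no better — is where the real work lies.
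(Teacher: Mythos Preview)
There is a genuine gap: your choice of $\Lambda'$ is wrong. The generator $e_{1,0}^d$ is \emph{not} minimal for $B(c)$ --- for instance $e_{1,1}$ and $e_{1,0}^2$ both have $L=3$, but $A_{B(c)}(e_{1,1})=c<2c=A_{B(c)}(e_{1,0}^2)$. Lemma~\ref{lem:action}(a) says that the minimal generator associated to the line $x+y=d$ is the \emph{maximal} convex integral path tangent to it, namely the full diagonal segment $\Lambda'=e_{1,1}^d$. For this $\Lambda'$ one has $x(\Lambda')=y(\Lambda')=m(\Lambda')=d$, so $I(\Lambda')=d(d+3)$, $A_{B(c)}(\Lambda')=dc$, and condition (iii) reads $x(\Lambda)+y(\Lambda)-h(\Lambda)/2\ge 3d-1$, not $2d-1$.

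With the correct $\Lambda'$, the three terms in the minimum fall out much more simply than you anticipate, and the $h$-label bookkeeping and isoperimetric estimates you worry about are not needed (nor are the second and third bullets of Theorem~\ref{thm:convex}). If $\Lambda\le_{P(a,1),B(c)} e_{1,1}^d$ then, dropping the $-h(\Lambda)/2$ term in (iii), one has $x(\Lambda)+y(\Lambda)\ge 3d-1$. The case split is on $y(\Lambda)$: if $y(\Lambda)\ge 1$ then $A_{P(a,1)}(\Lambda)=x(\Lambda)+ay(\Lambda)\ge (3d-1)+(a-1)=3d-2+a$, and (ii) gives $c\ge(3d-2+a)/d$; if $y(\Lambda)=0$ then the index constraint forces $\Lambda=e_{1,0}^{d(d+3)/2}$, giving $c\ge(d+3)/2$. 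The term $1+a$ does not come from a ``trivial inclusion'' --- it comes from the factorization: if $n>1$ then each $\Lambda_i'=e_{1,1}^{d_i}$ with $d_i<d$, and one shows (by an ECH index count) that not all $\Lambda_i$ can have $y(\Lambda_i)=0$, so some factor yields $c\ge(3d_i-2+a)/d_i$; the minimum of $(3d'-2+a)/d'$ over $d'\in\{1,\dots,d\}$ is achieved at $d'=1$ or $d'=d$ (the functions agree at $a=2$), and $d'=1$ gives $1+a$. Your guess that $(3d-2+a)/d$ comes from a factor like $e_{1,0}^{d-1}h_{1,1}$ is off the mark.
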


\begin{proof}
By Lemma~\ref{lem:action}(a), if $d$ is a positive integer then $e_{1,1}^d$ is minimal for $B(c)$. We can then apply Theorem~\ref{thm:convex} to $\Lambda'=e_{1,1}^d$. We do so in two steps. Below, the symbol `$\le$' between convex generators means `$\le_{P(a,1),B(c)}$'.

{\em Step 1.\/} Suppose that $\Lambda\le e_{1,1}^d$. Then by Definition~\ref{def:le} we have $I(\Lambda)=d(d+3)$ and $x(\Lambda)+y(\Lambda) \ge 3d-1$. If $y(\Lambda)>0$, then since $a\ge 1$, we have
\begin{equation}
\label{eqn:3d-2+a}
3d-2+a \le x(\Lambda) + ay(\Lambda) = A_{P(a,1)}(\Lambda) \le A_{B(c)}(e_{1,1}^d) = dc.
\end{equation}
If $y(\Lambda)=0$, then the only possibility is that $\Lambda=e_{1,0}^{d(d+3)/2}$, so
\[
\frac{d(d+3)}{2} = A_{P(a,1)}(\Lambda) \le A_{B(c)}(e_{1,1}^d) = dc.
\]
We conclude that if $\Lambda\le e_{1,1}^d$, then
\begin{equation}
\label{eqn:step1c}
c\ge \min\left(\frac{3d-2+a}{d},\frac{d+3}{2}\right),
\end{equation}
and if also $c < (3d-2+a)/d$ then $\Lambda = e_{1,0}^{d(d+3)/2}$.

{\em Step 2.\/} Let $d$ be any positive integer. We now apply Theorem~\ref{thm:convex} to $\Lambda'=e_{1,1}^d$ to obtain a convex generator $\Lambda$ with $I(\Lambda)=d(d+3)$, a positive integer $n$, and factorizations $\Lambda'=\Lambda_1'\cdots\Lambda_n'$ and $\Lambda=\Lambda_1\cdots\Lambda_n$, with $A_{P(a,1)}(\Lambda_i)\le A_{B(c)}(\Lambda_i')$ for each $i$. Since $\Lambda'$ has only one edge, it follows that $\Lambda_i'=e_{1,1}^{d_i}$ where $d_1,\ldots,d_n$ are positive integers with $\sum_{i=1}^nd_i=d$.

If we suppose that
\begin{equation}
\label{eqn:c3}
c < \frac{3d'-2+a}{d'} \quad\quad \forall d'\in\{1,\ldots,d-1\},
\end{equation}
then we must have $n=1$, so that $\Lambda\le e_{1,1}^{d}$. Otherwise, Step 1 implies that $\Lambda_i=e_{1,0}^{d_i(d_i+3)/2}$ for each $i=1,\ldots,n$. Then
\[
I(\Lambda) = \sum_{i=1}^n d_i(d_i+3) = 3d+\sum_{i=1}^nd_i^2 < 3d+ d^2,
\]
which is a contradiction.

So by Step 1, the inequalities \eqref{eqn:c3} imply \eqref{eqn:step1c}. Equivalently,
\begin{equation}
\label{eqn:step2clong}
c \ge \min\left(\min_{d'=1,\ldots,d}\frac{3d'-2+a}{d'},\frac{d+3}{2}
\right).
\end{equation}
Since the linear functions $a\mapsto (3d'-2+a)/d'$ for different $d'$ agree at $a=2$, and their slope is a decreasing function of $d'$, it follows that in the minimum in \eqref{eqn:step2clong} we can restrict attention to $d'=1,d$. Thus \eqref{eqn:step2clong} implies \eqref{eqn:y1bound}.
\end{proof}

\begin{proof}[Proof of Theorem~\ref{thm:polydiskball}.]
 We can read off the conclusions of Theorem~\ref{thm:convex} from the inequality \eqref{eqn:y1bound}. The case $d=4$ implies the first three conclusions. The case $d=5$ implies the fourth and fifth conclusions\footnote{The first conclusion of Theorem~\ref{thm:polydiskball} also follows from the inequality \eqref{eqn:y1bound} for $d=3$. The inequality \eqref{eqn:y1bound} for $d>5$ gives lower bounds on $c$ which are weaker than the volume constraint $c\ge \sqrt{2a}$.}.
\end{proof}

\subsection{Symplectic folding is sometimes best}

\begin{proof}[Proof of Theorem~\ref{thm:folding}.]
Assume that $2\le a \le 12/5$ and that $P(a,1)$ symplectically embeds into $B(c)$. Suppose that
\begin{equation}
\label{eqn:ca2}
c < 2 + a/2.
\end{equation}
We will obtain a contradiction in four steps. Below, the symbol `$\le$' between convex generators means `$\le_{P(a,1),B(c)}$'.

{\em Step 1.\/} We first show that if $\Lambda\le e_{1,1}^d$ with $d\le 9$, then $y(\Lambda)\le 1$.

If $y(\Lambda)\ge 2$, then as in \eqref{eqn:3d-2+a} we have
\[
3d-3+2a\le dc.
\]
Combining this with \eqref{eqn:ca2} gives
\[
2d - 6 < (d-4)a.
\]
If $1\le d\le 3$ then it follows that $a<4/3$; if $d=4$ then it follows that $2<0$; and if $5\le d \le 9$ then it follows that $a>12/5$. Either way, this contradicts our hypothesis that $2\le a \le 12/5$.

{\em Step 2.\/} We now show that if $d$ is a positive integer, if $\Lambda\le e_{1,1}^d$, and $y(\Lambda)\le 1$, then $\Lambda$ includes a factor of $e_{1,0}$.

If not, then the only possibility for $\Lambda$ with the correct ECH index is
\[
\Lambda = e_{(d^2+3d-2)/2,1}.
\]
The action inequality in the definition of $\le_{P(a,1),B(c)}$ then implies that
\[
\frac{d^2+3d-2}{2} + a \le dc.
\]
Combining this with \eqref{eqn:ca2} gives
\[
d^2-d-2 < (d-2)a.
\]
Similarly to Step 1, it follows that $a<2$ or $a>4$, contradicting our hypothesis.

{\em Step 3.\/} We now show that there does not exist any convex generator $\Lambda$ with $\Lambda\le e_{1,1}^9$.

If $\Lambda$ is such a generator, then we know from Step 1 that $y(\Lambda)\le 1$.

If $y(\Lambda)=0$, then the only possibility for $\Lambda$ with the correct ECH index is $\Lambda=e_{1,0}^{54}$. Then $54\le 9c$, which combined with \eqref{eqn:ca2} implies that $a>8$, contradicting our hypothesis.

If $y(\Lambda)=1$, then we must have $x(\Lambda)\ge 27$, or else we would have $I(\Lambda)\le 106$, contradicting the fact that $I(\Lambda)=108$. Since $x(\Lambda)\ge 27$, it follows that
\[
27 + a \le 9c.
\]
Combining this with \eqref{eqn:ca2} gives $a>18/7$, contradicting our hypothesis that $a\le 12/5$.

{\em Step 4.\/}
We now apply Theorem~\ref{thm:convex} to $\Lambda'=e_{1,1}^9$ to obtain a convex generator $\Lambda$, a positive integer $n$, and factorizations $\Lambda=\Lambda_1\cdots\Lambda_n$ and $\Lambda'=\Lambda_1'\cdots\Lambda_n'$ satisfying the three bullets in Theorem~\ref{thm:convex}.

By Step 3 and the first bullet, we must have $n>1$.

By Steps 1 and 2 and the first bullet, each $\Lambda_i$ contains a factor of $e_{1,0}$. Then by the second bullet, all of the $\Lambda_i$ must be equal, and all of the $\Lambda_i'$ must be equal. Thus either $n=9$ and $\Lambda_i'=e_{1,1}$ for each $i$, or $n=3$ and $\Lambda_i'=e_{1,1}^3$ for each $i$.

If $n=9$, then by Steps 1 and 2 we have $\Lambda=e_{1,0}^2$ for each $i$. But then $I(\Lambda)=36$, contradicting the fact that $I(\Lambda)=108$.

If $n=3$, then by Steps 1 and 2, and the facts that $I(\Lambda_i)=18$ and $x(\Lambda_i)+y(\Lambda_i)\ge 8$, the only possibilities are that $\Lambda_i=e_{1,0}^9$ for each $i$, or $\Lambda_i=e_{1,0}e_{6,1}$ for each $i$. In the former case we have $I(\Lambda)=54$, and in the latter case we have $I(\Lambda)=102$. Either way, this contradicts the fact that $I(\Lambda)=108$.
\end{proof}

\subsection{Symplectic embeddings of polydisks into ellipsoids}

\begin{proof}[Proof of Theorem~\ref{thm:polydiskellipsoid}.]
Let $a\ge 1$, let $b$ be a positive integer, suppose that $P(a,1)$ symplectically embeds into $E(bc,c)$, and assume that
\begin{equation}
\label{eqn:abbc}
a + b > bc.
\end{equation}
We need to show that $a>2$. If $b=1$ then this follows from Theorem~\ref{thm:polydiskball}, so we assume below that $b\ge 2$.

{\em Step 1.\/} We first show that if $\Lambda \le_{P(a,1),E(bc,c)} e_{b,1}$ then $\Lambda=e_{1,0}^{b+1}$.

We have $I(\Lambda) = 2b+2$ and $x(\Lambda) + y(\Lambda) \ge b+1$. If $y(\Lambda)>0$, then since $a\ge 1$ we have
\[
a+b \le ay(\Lambda) + x(\Lambda) = A_{P(a,1)}(\Lambda) \le A_{E(bc,c)}(e_{b,1}) = bc,
\]
contradicting \eqref{eqn:abbc}. Thus $y(\Lambda)=0$, so the only possibility is $\Lambda = e_{1,0}^{b+1}$.

{\em Step 2.\/} By Lemma~\ref{lem:action}, if $d$ is a positive integer then $e_{b,1}^d$ is minimal for $E(bc,c)$. We now apply Theorem~\ref{thm:convex} to $\Lambda'=e_{b,1}^2$ to obtain $\Lambda$ with $I(\Lambda) = 6b+4$, a positive integer $n$, and factorizations of $\Lambda$ and $\Lambda'$ into $n$ factors satisfying the first bullet of Theorem~\ref{thm:convex}. We must have $n=1$, since otherwise $n=2$ and $\Lambda_1'=\Lambda_2'=e_{b,1}$, and then Step 1 implies that $\Lambda_1=\Lambda_2 = e_{1,0}^{b+1}$, so $I(\Lambda)=4b+4$, contradicting the fact that $I(\Lambda) = 6b+4$.
Thus $\Lambda\le e_{b,1}^2$, and in particular $x(\Lambda) + y(\Lambda) \ge 2b+3$.

If $y(\Lambda)>0$, then
\[
2b+2+a \le A_{P(a,1)}(\Lambda) \le A_{E(bc,c)}(e_{b,1}^2)=2bc.
\]
It follows from this and \eqref{eqn:abbc} that $a>2$.

It $y(\Lambda)=0$, then $\Lambda=e_{1,0}^{3b+2}$, so
\[
3b+2 = A_{P(a,1)}(\Lambda) \le A_{E(bc,c)}(e_{b,1}^2) \le 2bc.
\]
By \eqref{eqn:abbc} it follows that $b+2<2a$. Since we are assuming that $b\ge 2$, we get $a>2$ again.
\end{proof}

\subsection{Symplectic embeddings of polydisks into polydisks}

\begin{proof}[Proof of Theorem~\ref{thm:polydisk}.]
Suppose that $P(a,1)$ symplectically embeds into $P(bc,c)$ where $a,b\ge 1$. Assume that \eqref{eqn:polydiskassumption} holds; in particular, $a\le 2b$. We want to show that $a\le bc$. Assume to get a contradiction that $a>bc$. We proceed in four steps. Below, the symbol `$\le$' between convex generators means `$\le_{P(a,1),P(bc,c)}$'.

{\em Step 1.\/} We first show that if $\Lambda'=e_{1,0}^d$ or $\Lambda'=e_{0,1}^d$ with $d>0$, and if $\Lambda\le \Lambda'$, then $d=1$ and $\Lambda=e_{1,0}$.

We have $x(\Lambda)+y(\Lambda) \ge 2d-1$, so $I(\Lambda)\ge 4d-2$. Since $I(\Lambda)=I(\Lambda')=2d$, this forces $d=1$. Then $I(\Lambda)=2$, so the only options are $\Lambda=e_{1,0}$ and $\Lambda=e_{0,1}$. The latter case is not possible because then $A_{P(a,1)}(\Lambda)=a$, but $A_{P(bc,c)}(\Lambda')\in\{c,bc\}$, and by assumption $a>bc\ge c$.

{\em Step 2.\/} We next show that if $\Lambda\le e_{1,0}^de_{0,1}^k$ with $d,k>0$, then $y(\Lambda) < k$.

We know that $x(\Lambda) + y(\Lambda) \ge 2d+2k-1$. If $y(\Lambda) \ge k$, then
\[
2d + k-1 + ka \le x(\Lambda) + ay(\Lambda) = A_{P(a,1)}(\Lambda) \le A_{P(bc,c)}(e_{1,0}^de_{0,1}^k) = c(d+kb).
\]
Since we are assuming that $c<a/b$, we deduce that $a > 2b + (k-1)b/d$, which contradicts our assumption that $a\le 2b$.

{\em Step 3.\/} A calculation using Lemma~\ref{lem:action}(b) shows that $\Lambda'=e_{1,0}^de_{0,1}^2$ is minimal for $P(bc,c)$ when $d=4\ceil{b}-2$. We now apply Theorem~\ref{thm:convex} to this $\Lambda'$ to obtain a convex generator $\Lambda$ with $I(\Lambda)=6d+4$, a positive integer $n$, and factorizations $\Lambda'=\Lambda'_1\cdots\Lambda'_n$ and $\Lambda=\Lambda_1\cdots\Lambda_n$, satisfying $\Lambda_i\le\Lambda_i'$ for each $i$. We claim that $n=1$.

Suppose to get a contradiction that $n>1$.

We first show that $y(\Lambda_i')\le 1$ for each $i$. Suppose to the contrary that for some $i=1,\ldots,n$ we have $\Lambda_i'=e_{1,0}^{d'}e_{0,1}^2$ with $d'<d$. By Step 1 we have $\Lambda_j = e_{1,0}$ for $j\neq i$. Then
\[
I(\Lambda) = 2(d-d')(y(\Lambda_i)+1) + I(\Lambda_i)
\]
Now $I(\Lambda_i)=I(\Lambda_i')=6d'+4$, and $y(\Lambda_i)\le 1$ by Step 2. Thus
\[
I(\Lambda) \le 4d+2d'+4 < 6d+4 = I(\Lambda),
\]
a contradiction.

Since $y(\Lambda_i')\le 1$ for each $i$, it follows from Steps 1 and 2 that $y(\Lambda_i)=0$ for each $i$. Thus
\[
I(\Lambda)=\sum_{i=1}^nI(\Lambda_i) = \sum_{i=1}^nI(\Lambda_i').
\]
On the other hand, if $X\in\{0,1,\ldots,d\}$ denotes the sum of $x$ of the two factors $\Lambda_i'$ that contain $e_{0,1}$, then
\[
\sum_{i=1}^n I(\Lambda_i') = 2d+2X+4 < 6d+4.
\]
Combining the above two lines gives a contradiction.

{\em Step 4.\/} We now complete the proof. Since $n=1$ in Step 3, we have $\Lambda\le \Lambda'=e_{1,0}^de_{0,1}^2$.

If $y(\Lambda)=0$, then $\Lambda=e_{1,0}^{3d+2}$, so
\[
3d+2 = A_{P(a,1)}(\Lambda) \le A_{P(bc,c)}(\Lambda') = c(d+2b).
\]
If $y(\Lambda)>0$, then since $x(\Lambda)+y(\Lambda) \ge 2d+3$, we have
\[
2d+2+a \le x(\Lambda) + ay(\Lambda) = A_{P(a,1)}(\Lambda) \le A_{P(bc,c)}(\Lambda') = c(d+2b).
\]
Either way, we have
\[
c(d+2b) \ge \min(3d+2,2d+2+a).
\]
Since $d=4\ceil{b}-2\ge 2b\ge a$, the above minimum is $2d+2+a$. Thus
\[
c\ge \frac{2d+2+a}{d+2b}.
\]
Since we are assuming that $c<a/b$, we obtain
\begin{equation}
\label{eqn:ad}
a > \frac{2b(d+1)}{d+b}.
\end{equation}
Plugging in $d=4\ceil{b}-2$ gives a contradiction to \eqref{eqn:polydiskassumption}.
\end{proof}

\begin{remark}
\label{rem:improve2}
If Theorem~\ref{thm:convex} can be improved as conjectured in Remark~\ref{rem:improve}, then in Steps 3 and 4 above we can take $d$ to be arbitrarily large. Then \eqref{eqn:ad} implies that $a\ge 2b$. This would allow the assumption \eqref{eqn:polydiskassumption} in Theorem~\ref{thm:polydisk} to be weakened to $a < 2b$ (and then to $a\le 2b$ by a simple argument).
\end{remark}

\section{Review of embedded contact homology}
\label{sec:reviewECH}

We now review those aspects of embedded contact homology that are needed in the proof of Theorem~\ref{thm:convex}. More details can be found in the survey \cite{bn}. The reader familiar with ECH may wish to skip ahead to \S\ref{sec:newECH}.

\subsection{Chain complex generators}

Let $Y$ be a closed oriented three-manifold, let $\lambda$ be a contact form on $Y$, and let $\Gamma\in H_1(Y)$. We now review how to define the embedded contact homology $ECH(Y,\lambda,\Gamma)$ with $\Z/2$ coefficients\footnote{One can also define ECH with integer coefficients, as explained in \cite[\S9]{obg2}, but that is not necessary for the applications here.}.

Let $\xi=\Ker(\lambda)$ denote the contact structure determined by $\lambda$, and let $R$ denote the Reeb vector field associated to $\lambda$.
A {\em Reeb orbit\/} is a map $\gamma:\R/T\Z\to Y$ for some $T>0$, modulo precomposition with translations, such that $\gamma'(t)=R(\gamma(t))$. Given a Reeb orbit $\gamma$, the linearization of the Reeb flow around $\gamma$ determines a symplectic linear map
\[
P_\gamma:(\xi_{\gamma(0)},d\lambda) \longrightarrow (\xi_{\gamma(0)},d\lambda).
\]
The Reeb orbit $\gamma$ is {\em nondegenerate\/} if $1\notin\op{Spec}(P_\gamma)$. We then say that $\gamma$ is {\em elliptic\/} if the eigenvalues of $P_\gamma$ are on the unit circle, {\em positive hyperbolic\/} if the eigenvalues of $P_\gamma$ are positive, and {\em negative hyperbolic\/} if the eigenvalues of $P_\gamma$ are negative.
Assume that $\lambda$ is nondegenerate, i.e.\ that all Reeb orbits are nondegenerate.

An {\em orbit set\/} is a finite set of pairs $\alpha=\{(\alpha_i,m_i)\}$ where the $\alpha_i$ are distinct embedded Reeb orbits and the $m_i$ are positive integers. We call $m_i$ the ``multiplicity'' of $\alpha_i$ in $\alpha$. We sometimes write an orbit set using the multiplicative notation $\alpha=\prod_i\alpha_i^{m_i}$. The homology class of the orbit set $\alpha$ is defined by
\[
[\alpha] = \sum_im_i[\alpha_i] \in H_1(Y).
\]
The orbit set $\alpha$ is {\em admissible\/} if $m_i=1$ whenever $\alpha_i$ is (positive or negative) hyperbolic. 

The embedded contact homology $ECH(Y,\lambda,\Gamma)$ is the homology of a chain complex $ECC(Y,\lambda,\Gamma,J)$, which is the free $\Z/2$-module generated by admissible orbit sets $\alpha$ with $[\alpha]=\Gamma$. The differential depends on a suitable almost complex structure $J$ on $\R\times Y$, and to define it we first need some more preliminaries.

\subsection{The ECH index}

If $\beta=\{(\beta_j,n_j)\}$ is another orbit set with $[\alpha]=[\beta]$, let $H_2(Y,\alpha,\beta)$ denote the set of $2$-chains $Z$ in $Y$ with $\partial Z = \sum_im_i\alpha_i - \sum_jn_j\beta_j$, modulo boundaries of $3$-chains. The set $H_2(Y,\alpha,\beta)$ is an affine space over $H_2(Y)$.

Given $Z\in H_2(Y,\alpha,\beta)$, the {\em ECH index\/} is an integer defined by
\begin{equation}
\label{eqn:defI}
I(\alpha,\beta,Z) = c_\tau(Z) + Q_\tau(Z) + CZ_\tau^I(\alpha) - CZ_\tau^I(\beta).
\end{equation}
Here $\tau$ is a homotopy class of trivializations of $\xi$ over the Reeb orbits $\alpha_i$ and $\beta_j$; $c_\tau(Z)$ denotes the relative first Chern class of $\xi$ over $Z$ with respect to $\tau$, see \cite[\S3.2]{bn}; $Q_\tau(Z)$ denotes the relative self-intersection number of $Z$ with respect to $\tau$, see \cite[\S3.3]{bn}; and
\[
CZ_\tau^I(\alpha) = \sum_i\sum_{k=1}^{m_i}CZ_\tau(\alpha_i^k),
\]
where $CZ_\tau$ denotes the Conley-Zehnder index with respect to $\tau$, and $\alpha_i^k$ denotes the $k$-fold cover of $\alpha_i$. The ECH index does not depend on the choice of trivialization $\tau$.


\subsection{Holomorphic curves and the index inequality}

We say that an almost complex structure $J$ on $\R\times Y$ is ``$\lambda$-compatible'' if $J(\partial_s)=R$, where $s$ denotes the $\R$ coordinate; $J(\xi)=\xi$; $d\lambda(v,Jv)\ge 0$ for $v\in\xi$; and $J$ is $\R$-invariant. Fix a $\lambda$-compatible $J$.
A ``$J$-holomorphic curve from $\alpha$ to $\beta$'' is a $J$-holomorphic curve in $\R\times Y$, where the domain is a possibly disconnected punctured compact Riemann surface, with positive ends asymptotic to covers $\alpha_i^{q_{i,k}}$ with total multiplicity $\sum_kq_{i,k}= m_i$, and negative ends asymptotic to covers $\beta_j^{q_{j,l}}$ with total multiplicity $\sum_lq_{j,l}= n_j$, see \cite[\S3.1]{bn}. A holomorphic curve $u$ as above determines a homology class $[u]\in H_2(Y,\alpha,\beta)$.

The {\em Fredholm index\/} of $u$ is defined by
\begin{equation}
\label{eqn:ind}
\op{ind}(u) = -\chi(u) + 2c_\tau(u) + CZ_\tau^{\op{ind}}(u).
\end{equation}
Here $\chi(u)$ denotes the Euler characteristic of the domain of $u$; $\tau$ is a homotopy class of trivialization of $\xi$ over $\alpha_i$ and $\beta_j$ as before; $c_\tau(u)$ is shorthand for $c_\tau([u])$;  and
\[
CZ_\tau^{\op{ind}}(u) = \sum_i\sum_kCZ_\tau(\alpha_i^{q_{i,k}}) - \sum_j\sum_lCZ_\tau(\beta_j^{q_{j,l}}).
\]
If $J$ is generic and $u$ has no multiply covered components, then the moduli space of $J$-holomorphic curves from $\alpha$ to $\beta$ is a manifold near $u$ of dimension $\op{ind}(u)$. Also, if $u$ has no multiply covered components, then without any genericity assumption on $J$, we have the index inequality
\begin{equation}
\label{eqn:indineq}
\op{ind}(u) \le I(u),
\end{equation}
where $I(u)$ is shorthand for $I(\alpha,\beta,[u])$, see \cite[\S3.4]{bn}.

\subsection{Holomorphic currents}

A ``$J$-holomorphic current from $\alpha$ to $\beta$'' is a finite formal sum $\current=\sum_kd_kC_k$ where the $C_k$ are distinct, irreducible (i.e.\ connected domain), somewhere injective $J$-holomorphic curves, such that if $C_k$ is a holomorphic curve from the orbit set $\alpha(k)$ to the orbit set $\beta(k)$, then $\alpha=\prod_k\alpha(k)^{d_k}$ and $\beta=\prod_k\beta(k)^{d_k}$. Here the ``product'' of two orbit sets is defined by adding the multiplicities of all Reeb orbits involved. The curves $C_k$ are the ``components'' of the holomorphic current $\current$, and the integers $d_k$ are the ``multiplicities'' of the components.

Let $\mc{M}^J(\alpha,\beta)$ denote the set of $J$-holomorphic currrents from $\alpha$ to $\beta$. Observe that $\R$ acts on this set by translation of the $\R$ coordinate on $\R\times Y$. Also, each $\current\in\mc{M}^J(\alpha,\beta)$ determines a homology class $[\current]\in H_2(Y,\alpha,\beta)$. Define the ECH index $I(\current) = I(\alpha,\beta,[\current])$.

The index inequality \eqref{eqn:indineq} can be used to show the following. Below, a {\em trivial cylinder\/} is a cylinder $\R\times\gamma\subset\R\times Y$ where $\gamma$ is an embedded Reeb orbit; this is automatically $J$-holomorphic.

\begin{proposition}
\label{prop:lowI}
(part of \cite[Prop.\ 3.7]{bn})
If $J$ is generic, then each $J$-holomorphic current $\current=\sum_id_iC_i$ in $\R\times Y$ has the following properties:
\begin{itemize}
\item $I(\current)\ge 0$.
\item $I(\current)=0$ if and only if each $C_i$ is a trivial cylinder.
\item If $I(\current)=1$, then one $C_i$ is embedded and has $\op{ind}=I=1$; and the curves $C_j$ for $j\neq i$ are trivial cylinders disjoint from $C_i$.
\end{itemize}
\end{proposition}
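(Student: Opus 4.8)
The plan is to deduce everything from the index inequality \eqref{eqn:indineq} for somewhere injective curves, together with its refinement (recalled in \cite[\S3.4]{bn}) describing when equality holds, the Fredholm index formula \eqref{eqn:ind}, and the behaviour of the ECH index under unions of holomorphic currents. Since the components $C_i$ of $\current$ are by definition irreducible and somewhere injective, the first step is to understand a single such curve $C$. If $C$ is a trivial cylinder then $I(C)=\op{ind}(C)=0$. Otherwise $\R$ acts freely on the moduli space of somewhere injective curves near $C$, and for generic $J$ this is a manifold of dimension $\op{ind}(C)$, whose quotient by $\R$ is nonempty; hence $\op{ind}(C)\ge 1$, and then \eqref{eqn:indineq} gives $I(C)\ge\op{ind}(C)\ge 1$. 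More generally I would record that for a positive integer $d$ the current $dC$ has $I(dC)=0$ when $C$ is a trivial cylinder, and $I(dC)\ge 1$ for generic $J$ when it is not, with $I(dC)\ge 2$ when moreover $d\ge 2$.

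Next I would assemble the current $\current=\sum_i d_iC_i$. Iterating the additivity of the ECH index under unions of currents with no common component yields
\[
I(\current)\;\ge\;\sum_i I(d_iC_i)\;+\;2\sum_{i<j}d_id_j\,(C_i\cdot C_j),
\]
where each $C_i\cdot C_j\ge 0$ by positivity of intersections (interpreted through relative intersection theory, so as to account for ends asymptotic to covers of a common Reeb orbit). Every term on the right is a nonnegative integer, so $I(\current)\ge 0$; and if $I(\current)=0$ then every $I(d_iC_i)=0$, forcing each $C_i$ to be a trivial cylinder, while conversely a union of trivial cylinders has $I=0$ by a direct computation from \eqref{eqn:defI}. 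This gives the first two bullets.

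For the third bullet, suppose $I(\current)=1$. Then $\sum_i I(d_iC_i)\le 1$; since a union of trivial cylinders has $I=0\ne 1$, exactly one summand $d_iC_i$ has $I(d_iC_i)=1$ while the rest are trivial cylinders, and all cross terms $C_i\cdot C_j$ vanish, so the components are pairwise disjoint. The distinguished summand cannot have $d_i\ge 2$, so $d_i=1$; then $1=I(C_i)\ge\op{ind}(C_i)\ge 1$ forces $\op{ind}(C_i)=I(C_i)=1$, and equality in the refined index inequality forces $C_i$ to be embedded. The one genuinely nonformal ingredient, and the main obstacle, is the lower bound $I(dC)\ge 1$ (and $\ge 2$ for $d\ge 2$) for multiple covers of non-trivial somewhere injective curves, together with the positivity $C_i\cdot C_j\ge 0$ near the asymptotic ends: both rely on the relative adjunction formula and on bounds for the writhe of the braids traced out by the ends of holomorphic curves, which form the technical core of the ECH index machinery. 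Everything else is bookkeeping with \eqref{eqn:defI}, \eqref{eqn:ind}, and \eqref{eqn:indineq}.
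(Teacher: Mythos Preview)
The paper does not prove this proposition; it is quoted from \cite[Prop.~3.7]{bn} and used as a black box. Your outline is essentially the argument given there (built on \cite{ir}): one combines the index inequality \eqref{eqn:indineq}, the genericity bound $\op{ind}(C)\ge 1$ for non-trivial somewhere injective curves in a symplectization, and the inequality $I(\current+\current')\ge I(\current)+I(\current')+2\,\current\cdot\current'$ from \cite{ir} (the symplectization precursor of Proposition~\ref{prop:5.1}(a) in this paper), where the cross term for distinct components is the geometric intersection count and hence nonnegative. Your identification of the writhe bounds and the relative adjunction formula as the technical core is accurate; in particular the step $I(dC)\ge 2$ for $d\ge 2$ and $C$ non-trivial follows because the self-pairing analogous to \eqref{eqn:CstarC} is nonnegative once $\op{ind}(C)\ge 1$, via the parity relation between $\op{ind}(C)$ and the number of ends at positive hyperbolic orbits.
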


\subsection{The differential}

Given a generic $\lambda$-compatible $J$, we define the differential $\partial$ by
\[
\partial\alpha = \sum_\beta\sum_{Z\in H_2(Y,\alpha,\beta) : I(Z)=1}\#\frac{\mc{M}^J(\alpha,\beta,Z)}{\R}\cdot\beta
\]
where `$\#$' denotes the mod 2 count. It is shown for example in \cite[\S5.3]{bn} that $\partial$ is well-defined, and in \cite[\S7]{obg1} that $\partial^2=0$. We denote the homology of the chain complex $ECC(Y,\lambda,\Gamma,J)$ by $ECH(Y,\lambda,\Gamma,J)$.

If $\Gamma=0$ and $H_2(Y)=0$, then the chain complex $ECC(Y,\lambda,0,J)$ has a canonical $\Z$-grading, in which the grading of an admissible orbit set $\alpha$ is defined by
\begin{equation}
\label{eqn:absoluteZgrading}
I(\alpha) = I(\alpha,\emptyset,Z)
\end{equation}
where $Z$ is the unique element of $H_2(Y,\alpha,\emptyset)$. (For the grading in more general cases see \cite[\S3.6]{bn}.)

It follows from a theorem of Taubes \cite{e1}, identifying ECH with a version of Seiberg-Witten Floer cohomology \cite{km}, that $ECH(Y,\lambda,\Gamma,J)$ depends only on $Y$, the contact structure\footnote{In a certain sense, ECH does not depend on the contact structure either; see \cite[Rem.\ 1.7]{bn}.} $\xi=\Ker(\lambda)$, and $\Gamma$. This invariance of ECH currently cannot be proved directly by counting holomorphic curves, see \cite[\S5.5]{bn} and \S\ref{sec:newECH}.

An important example is that if $Y$ is diffeomorphic to $S^3$, and if $\Ker(\lambda)$ is the standard (fillable) contact structure\footnote{For a different contact structure, if one still uses the absolute grading \eqref{eqn:absoluteZgrading}, then \eqref{eqn:echs3} will hold with a grading shift.}, then in terms of the absolute grading \eqref{eqn:absoluteZgrading}, we have
\begin{equation}
\label{eqn:echs3}
ECH_*(Y,\lambda,0,J) = \left\{\begin{array}{cl}
\Z/2, & *=0,2,4,\ldots,\\
0, & \mbox{otherwise}.
\end{array}\right.
\end{equation}
Given the invariance of ECH, this can be proved by computing the ECH for a specific contact form as in \cite[\S3.7]{bn}.

\subsection{Topological complexity of holomorphic curves}

The index inequality \eqref{eqn:indineq} shows that the ECH index bounds the Fredholm index of holomorphic curves. Related to the ECH index is another topological quantity, denoted by $J_0$, which controls a certain sort of topological complexity of holomorphic curves. It is defined as follows: If $\alpha=\{(\alpha_i,m_i)\}$ and $\beta=\{(\beta_j,n_j)\}$ are orbit sets with $[\alpha]=[\beta]\in H_1(Y)$, and if $Z\in H_2(Y,\alpha,\beta)$, then
\begin{equation}
\label{eqn:defJ0}
J_0(\alpha,\beta,Z) = -c_\tau(Z) + Q_\tau(C) + CZ_\tau^J(\alpha) - CZ_\tau^J(\beta)
\end{equation}
where
\[
CZ_\tau^J(\alpha) = \sum_i\sum_{k=1}^{m_i-1}CZ_\tau(\alpha_i^k).
\]
The difference between the definition of $I$ in \eqref{eqn:defI}, and the definition of $J_0$ in \eqref{eqn:defJ0}, is that the sign of the Chern class term is switched, and the Conley-Zehnder term is slightly different. If $\current\in\mc{M}^J(\alpha,\beta)$, we write $J_0(\current)=J_0(\alpha,\beta,[\current])$.

The quantity $J_0$ controls topological complexity as follows. Suppose that $C\in\mc{M}^J(\alpha,\beta)$ is somewhere injective and irreducible. Let $g(C)$ denote the genus of $C$, let $n_i^+$ denote the number of positive ends of $C$ at covers of $\alpha_i$, and let $n_j^-$ denote the number of negative ends of $C$ at covers of $\beta_j$.

\begin{proposition}
\label{prop:J0}
Let $\alpha$ and $\beta$ be admissible orbit sets. Suppose that $C\in\mc{M}^J(\alpha,\beta)$ is somewhere injective and irreducible. Then
\begin{equation}
\label{eqn:tcb}
2g(C)-2 + \sum_i(2n_i^+-1) + \sum_j(2n_j^--1) \le J_0(C).
\end{equation}
\end{proposition}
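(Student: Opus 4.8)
The strategy is to mimic the derivation of the ECH index inequality \eqref{eqn:indineq}, but using $J_0$ in place of $I$. The key point is that the quantity $J_0$ is built from the same relative intersection-theoretic ingredients as $I$ — namely $c_\tau$, $Q_\tau$, and a Conley--Zehnder sum — but with the sign of $c_\tau$ reversed and with the ``trivial-cylinder-corrected'' Conley--Zehnder sum $CZ_\tau^J$ (omitting the top term for each Reeb orbit) rather than the full sum $CZ_\tau^I$. Thus the analytic input we need is exactly the relative adjunction formula together with the writhe bound for the asymptotics of a somewhere injective holomorphic curve, both of which underlie the proof of \eqref{eqn:indineq}. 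I would first recall the relative adjunction formula: for a somewhere injective $J$-holomorphic curve $C$ with relative homology class $Z = [C]$,
\[
c_\tau(Z) = \chi(C) + Q_\tau(Z) + w_\tau(C) - 2\delta(C),
\]
where $w_\tau(C)$ is the total writhe of the (positive and negative) asymptotic braids of $C$ with respect to $\tau$, and $\delta(C)\ge 0$ is the count of singularities (with $\delta(C)=0$ if $C$ is embedded). Rearranging,
\[
-c_\tau(Z) + Q_\tau(Z) = -\chi(C) - w_\tau(C) + 2\delta(C) \ge -\chi(C) - w_\tau(C).
\]

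Next I would plug this into the definition \eqref{eqn:defJ0} of $J_0(C)$ to obtain
\[
J_0(C) \ge -\chi(C) - w_\tau(C) + CZ_\tau^J(\alpha) - CZ_\tau^J(\beta),
\]
and then bound $-w_\tau(C)$ from below using the asymptotic writhe inequalities. For a somewhere injective curve, at each positive end the writhe of the braid formed by the ends of $C$ at covers of a fixed $\alpha_i$ is bounded below in terms of the Conley--Zehnder indices of the multiple covers of $\alpha_i$; the precise statement (from Hutchings' index-inequality paper, as recalled in \cite[\S3.4]{bn}) gives, after summing over all ends and combining with the writhe of the individual braids, an inequality of the shape
\[
-w_\tau(C) \;\ge\; \sum_i\Bigl(CZ_\tau^J(\alpha)\text{-type correction}\Bigr) + \sum_i(n_i^+ - \#\{\text{something}\}) - \cdots,
\]
and the bookkeeping is engineered precisely so that the Conley--Zehnder contributions combine with $CZ_\tau^J(\alpha) - CZ_\tau^J(\beta)$ and the leftover combinatorial terms produce exactly $\sum_i(2n_i^+ - 1) + \sum_j(2n_j^- - 1)$. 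Combining this with $-\chi(C) = 2g(C) - 2 + (\text{number of ends})$ and reorganizing the count of ends yields \eqref{eqn:tcb}. The admissibility hypothesis on $\alpha$ and $\beta$ enters to guarantee that hyperbolic orbits appear with multiplicity one, which keeps the Conley--Zehnder bookkeeping in the ``generic'' regime where these writhe bounds are sharp enough.

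The main obstacle is the writhe estimate and its exact combinatorial packaging: one must verify that the lower bound on $-w_\tau(C)$, when written in terms of Conley--Zehnder indices of iterates, differs from $CZ_\tau^J(\alpha) - CZ_\tau^J(\beta)$ by precisely $\sum_i(2n_i^+ - 1) + \sum_j(2n_j^- - 1)$, up to the $-\chi(C)$ contribution. This is a purely combinatorial computation with the braid/linking data at the ends, of the same flavor as (but distinct from) the one in the proof of the ECH index inequality, and it is where the factor $2$ and the $-1$'s in \eqref{eqn:tcb} are pinned down. I would also need to be slightly careful that $C$ being irreducible and somewhere injective is exactly what makes the relative adjunction formula and writhe bounds applicable with the nonnegative error term $2\delta(C)$; dropping the somewhere-injective hypothesis would break the writhe bound. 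Everything else — the relative adjunction formula, the $\R$-invariance irrelevance, and the definition of $J_0$ — is already in place, so the proof is essentially a bookkeeping exercise once the writhe inequality is invoked in the correct form.
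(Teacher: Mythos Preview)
Your outline is correct in spirit, but note that the paper itself gives no proof here at all: its entire argument is the single sentence ``This is a special case of \cite[Prop.~6.9]{ir}.'' What you have sketched is essentially the proof of that proposition in \cite{ir}, namely the relative adjunction formula $c_\tau = \chi(C) + Q_\tau + w_\tau(C) - 2\delta(C)$ combined with the writhe bounds at each end, reorganized around $J_0$ rather than $I$. So your approach matches the underlying argument that the paper is citing, but it supplies more than the paper does.

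One small caution: your description of the writhe step is deliberately vague (``bookkeeping is engineered precisely so that\ldots''), and this is exactly where the content lies. The precise inequality one needs at a positive end at $\alpha_i$ is that the writhe of the asymptotic braid is bounded above by $\sum_{k=1}^{m_i-1}CZ_\tau(\alpha_i^k) - (n_i^+ - 1)$ (and the reverse inequality at negative ends), which is what produces the $(2n_i^+-1)$ terms after combining with $-\chi(C) = 2g(C)-2 + \sum_i n_i^+ + \sum_j n_j^-$. The admissibility hypothesis is used to ensure the ``partition conditions'' on the end multiplicities are the expected ones, so that the writhe bound holds in this form. If you intend to actually write out the proof rather than cite \cite{ir}, you should state this writhe bound explicitly rather than leave it as a bookkeeping remark.
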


\begin{proof}
This is a special case of \cite[Prop.\ 6.9]{ir}.
\end{proof}

\subsection{Filtered ECH}

If $\alpha=\{(\alpha_i,m_i)\}$ is an orbit set for $\lambda$, define its {\em symplectic action\/} $\mc{A}(\alpha)\in\R$ by
\[
\mc{A}(\alpha) = \sum_im_i\int_{\alpha_i}\lambda.
\]
It follows from our assumptions on the almost complex structure $J$ that if $\mc{M}^J(\alpha,\beta)$ is nonempty, then $\mc{A}(\alpha) \ge \mc{A}(\beta)$.

Given $L\in\R$, define
\[
ECC^L(Y,\lambda,\Gamma,J) \subset ECC(Y,\lambda,\Gamma,J)
\]
to be the span of the admissible orbit sets $\alpha$ satisfying $[\alpha]=\Gamma$ and $\mc{A}(\alpha)<L$. It follows from the above that this is a subcomplex. Its homology is the {\em filtered ECH\/}, denoted by $ECH^L(Y,\lambda,\Gamma)$. It is shown in \cite[Thm.\ 1.3]{cc2} that filtered ECH does not depend on the choice of almost complex structure $J$. However, unlike the usual (unfiltered) ECH, it does depend strongly on the contact form $\lambda$.

Inclusion of chain complexes induces maps
\[
ECH^L(Y,\lambda,\Gamma) \longrightarrow ECH(Y,\lambda,\Gamma)
\]
and
\[
ECH^L(Y,\lambda,\Gamma) \longrightarrow ECH^{L'}(Y,\lambda,\Gamma),
\]
for $L<L'$. It is shown in \cite[Thm.\ 1.3]{cc2} that these maps also do not depend on $J$.

\subsection{ECH capacities}

We now review how to define ECH capacities in the special case we need, namely for a compact smooth star-shaped domain $X\subset\R^4$, equipped with the standard symplectic form $\omega$ in \eqref{eqn:omegastd}. Here ``star-shaped'' means that the boundary $Y$ of $X$ is transverse to the radial vector field
\begin{equation}
\label{eqn:radialvf}
\rho = \frac{1}{2}\sum_{i=1}^2\left(x_i\partial_{x_i}+y_i\partial_{y_i}\right).
\end{equation}

The three-manifold $Y$ is diffeomorphic to $S^3$, and the $1$-form
\begin{equation}
\label{eqn:lambdastd}
\lambda_{std} = \frac{1}{2}\sum_{i=1}^2\left(x_idy_i - y_idx_i\right)
\end{equation}
restricts to a contact form on $Y$. For a nonnegative integer $k$, the $k^{th}$ ECH capacity of $X$ is defined by $c_k(X)=c_k(Y,\lambda)$, where $c_k(Y,\lambda)$ is defined as follows.

If $\lambda$ is a nondegenerate contact form on a three-manifold $Y$ diffeomorphic to $S^3$ whose kernel is the fillable contact structure, then $c_k(Y,\lambda)$ is the infimum over $L$ such that the degree $2k$ generator of $ECH(Y,\lambda,0)$ is contained in the image of the inclusion-induced map $ECH^L(Y,\lambda,0)\to ECH(Y,\lambda,0)$. Equivalently, if we pick a generic $\lambda$-compatible $J$, then $c_k(Y,\lambda)$ is the smallest $L$ such that the degree $2k$ generator of $ECH(Y,\lambda,0,J)$ can be represented in the chain complex $ECC(Y,\lambda,0,J)$ by a linear combination of admissible orbit sets each of which has action at most $L$. If $\lambda$ is possibly degenerate, then
\begin{equation}
\label{eqn:cklimit}
c_k(Y,\lambda) = \lim_{n\to\infty}c_k(Y,f_n\lambda)
\end{equation}
where $f_n:Y\to\R^{>0}$ are smooth functions such that $f_n\lambda$ is nondegenerate and $\lim_{n\to\infty}f_n=1$ in the $C^0$ topology.

\subsection{Cobordisms}
\label{sec:cob}

Let $(Y_+,\lambda_+)$ and $(Y_-,\lambda_-)$ be closed three-manifolds with contact forms. A {\em strong symplectic cobordism\/} ``from'' $(Y_+,\lambda_+)$ ``to'' $(Y_-,\lambda_-)$ is a compact symplectic four-manifold $(X,\omega)$ with boundary $\partial X = Y_+ - Y_-$ such that $\omega|_{Y_\pm}=d\lambda_\pm$.

Given $(X,\omega)$ as above, one can choose a neighborhood $N_-$ of $Y_-$ in $X$, identified with $[0,\epsilon)\times Y_-$ for some $\epsilon>0$, on which $\omega=e^s\lambda_-$, where $s$ denotes the $[0,\epsilon)$ coordinate. Likewise one can choose a neighborhood $N_+$ of $Y_+$ in $X$, identified with $(-\epsilon,0]\times Y_+$, on which $\omega=e^s\lambda_+$.
Given these choices, we define the ``completion'' of $(X,\omega)$ to be the four-manifold
\[
\overline{X} = ((-\infty,0]\times Y_-)\cup_{Y_-}X\cup_{Y_+} ([0,\infty)\times Y_+),
\]
glued using the above neighborhood identifications.

An almost complex structure $J$ on $\overline{X}$ is ``cobordism-admissible'' if it is $\omega$-compatible on $X$, and if on $[0,\infty)\times Y_+$ and $(-\infty,0]\times Y_-$ it agrees with $\lambda_\pm$-compatible almost complex structures $J_\pm$. Fix a cobordism-admissible $J$. If $\alpha_\pm$ are orbit sets in $Y_\pm$, we define the set $\mc{M}^J(\alpha_+,\alpha_-)$ of $J$-holomorphic currents in $\overline{X}$ analogously to the previous case of holomorphic currents in $\R\times Y$. The index inequality \eqref{eqn:indineq} and the topological complexity bound \eqref{eqn:tcb} carry over to $J$-holomorphic currents in the completed cobordism $\overline{X}$ without multiply covered components. The only difference is that in the definitions of $\op{ind}$, $I$, and $J_0$, now $c_\tau$ indicates the relative first Chern class of $T\overline{X}$.

In connection with cobordism maps on ECH, we will also need to consider ``broken holomorphic currents'':

\begin{definition}
Fix a cobordism-admissible almost complex structure $J$ on $\overline{X}$ which restricts to $\lambda_\pm$-compatible almost complex structures $J_\pm$ on the ends. Fix orbit sets $\alpha_\pm$ in $Y_\pm$. A {\em broken $J$-holomorphic current\/} from $\alpha_+$ to $\alpha_-$ is a tuple $\mc{B} = (\current(N_-),\current(N_-+1),\ldots,\current(N_+))$ where $N_-\le 0\le N_+$, for which there exist orbit sets $\alpha_-=\alpha_-(N_-),\ldots,\alpha_-(0)$ in $Y_-$, and orbit sets $\alpha_+(0),\ldots\alpha_+(N_+)=\alpha_+$ in $Y_+$,  such that:
\begin{itemize}
\item $\current_i\in\mc{M}^{J_-}(\alpha_-(i+1),\alpha_-(i))/\R$ for $i=N_-,\ldots,-1$.
\item $\current_0\in\mc{M}^J(\alpha_+(0),\alpha_-(0))$.
\item $\current_i\in\mc{M}^{J_+}(\alpha_+(i),\alpha_+(i-1))/\R$ for $i=1,\ldots,N_+$.
\item If $i\neq 0$, then not every component of $\current_i$ is a trivial cylinder.
\end{itemize}
\end{definition}

The holomorphic currents $\current(i)$ are called the ``levels'' of the broken holomorphic current $\mc{B}$. The ECH index of the broken $J$-holomorphic current $\mc{B}$ is defined to be the sum of the ECH indices of its levels:
\[
I(\mc{B}) = \sum_{i=N_-}^{N_+}I(\current(i)).
\]

\subsection{Cobordism maps}

We now consider maps on ECH induced by ``weakly exact'' symplectic cobordisms. For maps on ECH induced by exact symplectic cobordisms, see \cite[Thm.\ 1.9]{cc2}, and for general strong symplectic cobordisms see \cite{field}.

\begin{definition}
\label{def:we}
We call the strong symplectic cobordism $(X,\omega)$ {\em weakly exact\/} if there exists a $1$-form $\lambda$ on $X$ such that $d\lambda=\omega$.
\end{definition}

\begin{theorem}
\label{thm:sw}
Let $(Y_+,\lambda_+)$ and $(Y_-,\lambda_-)$ be closed oriented three-manifolds with contact forms, and let $(X,\omega)$ be a weakly exact symplectic cobordism from $(Y_+,\lambda_+)$ to $(Y_-,\lambda_-)$. Then there are canonical maps
\[
\Phi^L(X,\omega): ECH^L(Y_+,\lambda_+,0) \longrightarrow ECH^L(Y_-,\lambda_-,0)
\]
for each $L\in\R$ with the following properties:
\begin{description}
\item{(a)}
If $L<L'$, then the diagram
\[
\begin{CD}
ECH^L(Y_+,\lambda_+,0) @>{\Phi^L(X,\omega)}>> ECH^L(Y_-,\lambda_-,0)\\
@VVV  @VVV\\
ECH^{L'}(Y_+,\lambda_+,0) @>{\Phi^{L'}(X,\omega)}>> ECH^{L'}(Y_-,\lambda_-,0)
\end{CD}
\]
commutes. In particular,
\[
\Phi(X,\omega) = \lim_{L\to\infty}\Phi^L(X,\omega):ECH(Y_+,\lambda_+,0) \longrightarrow ECH(Y_-,\lambda_-,0)
\]
is well-defined.
\item{(b)}
If $X$ is diffeomorphic to a product $[0,1]\times Y$, then $\Phi(X,\omega)$ is an isomorphism.
\item{(c)}
If $J$ is any cobordism-admissible almost complex structure on $\overline{X}$, restricting to generic $\lambda_\pm$-compatible almost complex structures $J_\pm$ on the ends, then $\Phi^L(X,\omega)$ is induced by a (noncanonical) chain map
\[
\phi: ECC^L(Y_+,\lambda_+,0,J_+) \longrightarrow ECC^L(Y_-,\lambda_-,0,J_-)
\]
such that:
\begin{description}
\item{(i)} If $\alpha_\pm$ are admissible orbit sets for $\lambda_\pm$ with $[\alpha_\pm]=0$ and $\mc{A}(\alpha_\pm)<L$, and if the coefficient $\langle\phi\alpha_+,\alpha_-\rangle\neq 0$, then there exists a broken $J$-holomorphic current $\mc{B}$ from $\alpha_+$ to $\alpha_-$.
\item{(ii)} The broken $J$-holomorphic current $\mc{B}$ in (i) satisfies $I(\mc{B})=0$.
\end{description}
\end{description}
\end{theorem}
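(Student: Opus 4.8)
The plan is to construct the maps $\Phi^L(X,\omega)$ by the same route that Hutchings--Taubes use for \emph{exact} symplectic cobordisms in \cite{cc2}: via Taubes' isomorphism between ECH and Seiberg--Witten Floer cohomology \cite{e1,km}, the completed cobordism $\overline X$ induces a map on Seiberg--Witten Floer cohomology and hence on ECH, and the content of \cite{cc2} is that, in the exact case, this map respects the action filtration up to an error that disappears in a limit, producing maps $\Phi^L$ with the stated properties. In the situations needed in this paper $Y_\pm$ is a homology sphere, so $\lambda|_{Y_\pm}-\lambda_\pm$ is automatically exact and one may subtract a cutoff primitive from $\lambda$ to turn a weakly exact cobordism into an exact one, reducing directly to \cite{cc2}; for the statement in general, one must check that weak exactness already supplies the single ingredient of that construction that genuinely used exactness, namely an a priori action and energy bound for holomorphic currents in $\overline X$. (One could equally well take the unfiltered map $\Phi(X,\omega)$ of \cite{field} for strong cobordisms as given and refine it to the filtered level; either way the filtered estimate below is the crux.)

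For the estimate, fix a cobordism-admissible $J$ on $\overline X$ restricting to $\lambda_\pm$-compatible $J_\pm$, and a $1$-form $\lambda$ on $X$ with $d\lambda=\omega$. If $\current$ is a $J$-holomorphic current in $\overline X$ from $\alpha_+$ to $\alpha_-$ with $[\alpha_+]=[\alpha_-]=0$, then the $1$-cycle $\current\cap Y_\pm$ is null-homologous in $Y_\pm$, since it represents $[\alpha_\pm]=0$, so the closed form $\lambda|_{Y_\pm}-\lambda_\pm$ has zero period over it. A Stokes-type computation on $\current\cap X$, using $d\lambda_\pm(v,J_\pm v)\ge 0$ on the cylindrical ends, then gives $\mc{A}(\alpha_+)\ge \mc{A}(\alpha_-)$ together with a uniform bound on the energy of $\current$ in terms of $\mc{A}(\alpha_+)$; the only term here that is not already present in the exact case is the vanishing period. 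The same bounds hold for broken holomorphic currents by summing over levels, and exactness of $\omega$ on $X$ forbids closed holomorphic components. With these bounds in place the compactness theory for holomorphic currents in $\overline X$ applies, and the Hutchings--Taubes construction yields a (noncanonical) chain map $\phi:ECC^L(Y_+,\lambda_+,0,J_+)\to ECC^L(Y_-,\lambda_-,0,J_-)$ each of whose nonzero matrix entries is accounted for by a broken $J$-holomorphic current $\mc{B}$ from $\alpha_+$ to $\alpha_-$, which establishes (c)(i); and the fact that the relevant count is of broken currents with $I(\mc{B})=0$ is built into that construction, reflecting the dimension count on the Seiberg--Witten side together with the additivity of the ECH index under breaking and its nonnegativity on symplectization levels (Proposition~\ref{prop:lowI}), which gives (c)(ii).

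For (a), the chain maps $\phi$ for $L<L'$ can be chosen compatible with the inclusions of subcomplexes, so the squares commute on homology and the direct limit as $L\to\infty$ is the unfiltered cobordism map. For (b), a cobordism diffeomorphic to $[0,1]\times Y$ induces the canonical isomorphism on Seiberg--Witten Floer cohomology, regardless of whether $\lambda_+$ and $\lambda_-$ agree, so $\Phi(X,\omega)$ is an isomorphism; alternatively one glues on a collar and neck-stretches to reduce to a product cobordism and the identity map. That the maps are \emph{canonical}, in particular independent of $J$, of the abstract perturbations used for transversality on the Seiberg--Witten side, of the primitive $\lambda$, and of the collar identifications $N_\pm$, again comes from the Seiberg--Witten side, using \cite[Thm.\ 1.3]{cc2} for the $J$-independence of filtered ECH and of the inclusion-induced maps.

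The main obstacle is precisely this well-definedness and canonicity: as noted after \eqref{eqn:echs3}, invariance of ECH, and of these cobordism maps, cannot at present be proved by directly counting holomorphic curves, so the argument must pass through Seiberg--Witten Floer theory and its functoriality, and the substantive task is to verify that the weakly exact hypothesis furnishes exactly the energy control that that construction requires, in particular that the discrepancy $\lambda|_{Y_\pm}-\lambda_\pm$ does no harm, which is where the hypothesis $\Gamma=0$ enters. By comparison, the holomorphic-curve conclusions (c)(i)--(ii) are relatively soft once the Seiberg--Witten map and the compactness package are in hand.
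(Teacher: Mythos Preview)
Your approach is essentially the same as the paper's: the paper simply cites \cite[Thm.\ 2.3]{qech} for (a) and (b), the ``Holomorphic Curves axiom'' of \cite[Thm.\ 1.9]{cc2} adapted to the weakly exact case as in \cite[Thm.\ 2.3]{qech} for (c)(i), and \cite[Thm.\ 5.1]{gradings} for (c)(ii). Your sketch unpacks what those references contain, and your observation that in the cases used here $Y_\pm$ is a homology sphere (so one can modify $\lambda$ to make the cobordism genuinely exact and reduce directly to \cite{cc2}) is a clean shortcut not made explicit in the paper.

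The one place where your justification diverges is (c)(ii). You argue that $I(\mc{B})=0$ via ``the dimension count on the Seiberg--Witten side together with the additivity of the ECH index under breaking and its nonnegativity on symplectization levels (Proposition~\ref{prop:lowI}).'' The nonnegativity of $I$ on symplectization levels is not the relevant input here: the cobordism level $\current(0)$ may well have negative ECH index, so Proposition~\ref{prop:lowI} alone cannot force $I(\mc{B})=0$. What is actually needed is that the Seiberg--Witten cobordism map preserves the absolute grading, \emph{and} that Taubes' isomorphism intertwines the absolute grading on ECH with that on Seiberg--Witten Floer cohomology. The latter is precisely Cristofaro-Gardiner's theorem \cite[Thm.\ 5.1]{gradings}, which the paper cites directly. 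With that in hand, the chain map $\phi$ preserves the ECH index, so $I(\alpha_+)=I(\alpha_-)$ and hence $I(\mc{B})=0$ by additivity. Your ``dimension count on the Seiberg--Witten side'' is the right intuition, but the substantive step is the grading comparison of \cite{gradings}, not Proposition~\ref{prop:lowI}.
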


\begin{proof}
Assertions (a) and (b) are contained in \cite[Thm.\ 2.3]{qech}. Assertion (c)(i) is proved as in the ``Holomorphic Curves axiom'' in \cite[Thm.\ 1.9]{cc2}, modified for the weakly exact case as in \cite[Thm.\ 2.3]{qech}. Assertion (c)(ii) then follows from \cite[Thm.\ 5.1]{gradings}.
\end{proof}

\section{Multiply covered curves in cobordisms}
\label{sec:newECH}

The ECH cobordism maps in Theorem~\ref{thm:sw} are constructed using Seiberg-Witten theory. A technical difficulty with understanding ECH cobordism maps more directly in terms of holomorphic curves is that Proposition~\ref{prop:lowI} does not carry over to cobordisms. If $X$ is a strong symplectic cobordism, and if $J$ is a generic cobordism-admissible almost complex structure on $X$, then there may exist $J$-holomorphic currents $\current$ in $\overline{X}$ with multiply covered components such that $I(\current)<0$. Consequently, the broken holomorphic currents that arise in Theorem~\ref{thm:sw}(c) may be very complicated.

This section introduces a special kind of cobordism, called ``$L$-tame'', to which Proposition~\ref{prop:lowI} does carry over in a certain sense, stated in Proposition~\ref{prop:key} below.

\subsection{$L$-tame cobordisms}
\label{sec:deftame}

Let $(Y,\lambda)$ be a nondegenerate contact three-manifold.
If $\gamma$ is an elliptic Reeb orbit, then the linearized return map $P_\gamma$ is conjugate to rotation by angle $2\pi\theta$ for some $\theta\in\R/\Z$. Our assumption that $\lambda$ is nondegenerate implies that $\theta$ is irrational. We call $\theta$ the {\em rotation angle\/} of $\gamma$.

\begin{definition}
\label{def:lp}
Let $L>0$ and let $\gamma$ be an embedded elliptic Reeb orbit with action $\mc{A}(\gamma)<L$.
\begin{itemize}
\item
We say that $\gamma$ is {\em $L$-positive\/} if its rotation angle $\theta\in(0,\mc{A}(\gamma)/L)\mod 1$.
\item
We say that $\gamma$ is {\em $L$-negative\/} if its rotation angle $\theta\in(-\mc{A}(\gamma)/L,0)\mod 1$.
\end{itemize}
\end{definition}

Now let $(X,\omega)$ be a strong symplectic cobordism from $(Y_+,\lambda_+)$ to $(Y_-,\lambda_-)$ where $(Y_\pm,\lambda_\pm)$ are nondegenerate contact three-manifolds. 

\begin{definition}
If $\alpha_\pm$ are orbit sets for $\lambda_\pm$, define $e_L(\alpha_+,\alpha_-)$ to be the total multiplicity of all elliptic orbits in $\alpha_+$ that are $L$-negative, plus the total multiplicity of all elliptic orbits in $\alpha_-$ that are $L$-positive\footnote{That is, if $\alpha_+=\{(\alpha_i^+,m_i^+)\}$ and $\alpha_-=\{(\alpha_j^-,m_j^-)\}$, then we are to take the sum of $m_i^+$ over all $i$ such that $\alpha_i^+$ is elliptic and $L$-negative, plus the sum of $m_j^-$ over all $j$ such that $\alpha_j^-$ is elliptic and $L$-positive.}. If $\current\in\mc{M}^J(\alpha_+,\alpha_-)$, write $e_L(\current) = e_L(\alpha_+,\alpha_-)$.
\end{definition}

Let $J$ be a cobordism-admissible almost complex structure on $\overline{X}$, and let $L\in\R$. If $C$ is an irreducible $J$-holomorphic curve from $\alpha_+$ to $\alpha_-$, let $g(C)$ denote the genus of $C$, and let $h(C)$ denote the number of ends of $C$ at hyperbolic Reeb orbits.

\begin{definition}
\label{def:Ltame}
$(X,\omega,J)$ is {\em $L$-tame\/} if whenever $C$ is an embedded irreducible $J$-holomorphic curve in $\mc{M}^J(\alpha_+,\alpha_-)$, if there exists a positive integer $d$ such that $\mc{A}(\alpha_\pm)<L/d$ and $I(dC)\le 0$, then 
\begin{equation}
\label{eqn:Ltame}
2g(C)-2+\op{ind}(C) + h(C) + 2e_L(C)\ge 0.
\end{equation}
\end{definition}

\begin{example}
Suppose $(X,\omega)$ is a closed symplectic four-manifold (regarded as a cobordism from the empty set to itself). If $X$ contains no symplectically embedded sphere of self-intersection $-1$, then $(X,\omega,J)$ is $L$-tame for any $L$ and any generic $\omega$-compatible $J$.
\end{example}

\begin{example}
Suppose $(X,\omega)$ is a closed symplectic four-manifold and $T\subset X$ is an embedded Lagrangian torus such that $X\setminus T$ contains no symplectically embedded sphere of self-intersection $-1$. One can choose a tubular neighborhood $N$ of $T$ such that $X\setminus N$ is a strong symplectic cobordism from the empty set to the unit cotangent bundle of $T^2$. For any $L$, one can perturb $X\setminus N$ to $X'$ so that all embedded Reeb orbits on $\partial X'$ with symplectic action less than $L$ are positive hyperbolic or elliptic and $L$-positive. Then $(X',\omega,J)$ is $L$-tame for any generic cobordism-admissible $J$.
\end{example}

For the present paper, we will actually be interested in different examples of $L$-tame cobordisms which arise in \S\ref{sec:prove}.

The significance of the $L$-tameness condition is the following counterpart of Proposition~\ref{prop:lowI}:

\begin{proposition}
\label{prop:key}
Suppose that $J$ is generic and $(X,\omega,J)$ is $L$-tame. Let
\[
\current=\sum_kd_kC_k\in\mc{M}^J(\alpha_+,\alpha_-)
\]
be a $J$-holomorphic current in $\overline{X}$ with $\mc{A}(\alpha_\pm)<L$. Then:
\begin{description}
\item{(a)}
$I(\current)\ge 0$.
\item{(b)}
If $I(\current)=0$, then:
\begin{itemize}
\item
 $I(C_k)=0$ for each $k$.
\item
If $i\neq j$, then $C_i$ and $C_j$ do not both have positive ends at covers of the same $L$-negative elliptic orbit, and $C_i$ and $C_j$ do not both have negative ends at covers of the same $L$-positive elliptic orbit.
\item If $d_k'$ are integers with $0\le d_k' \le d_k$, then
\[
I\left(\sum_kd_k'C_k\right) = 0.
\]
\end{itemize}
\end{description}
\end{proposition}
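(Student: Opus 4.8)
The plan is to deduce Proposition~\ref{prop:key} by combining the additivity of the ECH index under ``gluing'' of currents with the $L$-tameness hypothesis, which forces every component (with any multiplicity up to $d_k$) to be ECH-index-nonnegative. The overall strategy mirrors the proof of Proposition~\ref{prop:lowI}, but instead of the index inequality $\op{ind}(C)\le I(C)$ for somewhere injective curves together with generic transversality, we use the topological complexity inequality from Proposition~\ref{prop:J0} together with the combinatorial relation between $I$, $J_0$, $\op{ind}$, $h$, and the number of ends, all packaged into the $L$-tameness estimate \eqref{eqn:Ltame}.

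First I would record the key algebraic fact: for orbit sets $\alpha_\pm$ with a chosen $Z\in H_2(\overline X,\alpha_+,\alpha_-)$, and for any decomposition $Z = Z_1 + Z_2$ with $Z_i\in H_2(\overline X,\alpha_\pm^{(i)},\cdot)$ arising from writing the current as a sum of sub-currents, one has an ``index ambiguity'' formula expressing $I(Z)$ in terms of $I(Z_1)$, $I(Z_2)$, and a linking/intersection correction term that is manifestly nonnegative (this is the relative intersection pairing $Q_\tau$ being subadditive, plus the Conley--Zehnder writhe terms; it is the same bookkeeping used in \cite{obg1} to prove $\partial^2=0$). Concretely, for a single somewhere injective curve $C$ and a positive integer $d$, one gets an inequality of the shape $I(dC) \ge d\,I(C) + (\text{something})$, but what I actually want is the reverse-flavored statement that if $I(dC)\le 0$ and the action is small enough, then $C$ itself is forced to be ``index zero'' once $L$-tameness is applied. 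So Step~1 is: isolate the precise additivity/superadditivity statement for $I$ under taking unions and multiple covers of components of a holomorphic current, and the parallel statement for $J_0$.

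Step~2 is the local-to-global argument for part (a). Given $\current = \sum_k d_k C_k$ with $\mc A(\alpha_\pm)<L$, suppose $I(\current)<0$. Using the superadditivity from Step~1, there must be some component $C_k$, together with some multiplicity $d \le d_k$, for which $I(dC_k)\le 0$ (one peels off components one at a time; the correction terms only help). Since $\mc A(\alpha_\pm) < L$ and $C_k$ has ends among these orbits, and since the action of the ends of $C_k$ is bounded by $\mc A(\alpha_\pm)$, the hypothesis $\mc A(\alpha_\pm) < L/d$ of Definition~\ref{def:Ltame} holds after possibly shrinking — here I need to be careful: the definition requires $\mc A(\alpha_\pm)<L/d$ for the relevant sub-orbit-sets, so I must track that the ends of $dC_k$ have total action at most $d\cdot\mc A(\text{ends of }C_k)$, which is $\le \mc A(\alpha_\pm) < L$, giving exactly the $L/d$ bound for the underlying embedded curve. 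Then $L$-tameness gives $2g(C_k)-2+\op{ind}(C_k)+h(C_k)+2e_L(C_k)\ge 0$. On the other hand, Proposition~\ref{prop:J0} applied to the somewhere injective $C_k$ gives $2g(C_k)-2+\sum(2n_i^+-1)+\sum(2n_j^--1)\le J_0(C_k)$; combining these with the identity relating $I$, $J_0$, $\op{ind}$ and the end-count (the ``partition conditions'' bookkeeping, $I = J_0 + \text{ind-type terms}$, cf.\ \cite{ir}) shows $I(C_k)\ge 0$, and more: genericity of $J$ forces $\op{ind}(C_k)\ge 0$ for somewhere injective curves, and the whole current then has $I(\current)\ge \sum_k d_k I(C_k)\ge 0$, a contradiction. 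This proves (a).

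Step~3 is the rigidity in part (b), which I would run exactly as in Proposition~\ref{prop:lowI}: if $I(\current)=0$, then every inequality above is an equality, so each $I(C_k)=0$; the correction terms in the superadditivity formula all vanish, and vanishing of those linking/intersection terms is precisely the statement that distinct components cannot share positive ends at a common $L$-negative elliptic orbit nor negative ends at a common $L$-positive elliptic orbit (this is where the $e_L$ term in \eqref{eqn:Ltame} does its job — the ``bad'' multiple covers are exactly the ones ruled out); and vanishing of all corrections for every sub-sum $\sum_k d_k' C_k$ with $0\le d_k'\le d_k$ gives the last bullet. The main obstacle I anticipate is Step~1, i.e.\ pinning down the exact superadditivity inequality for the ECH index under decomposing a current together with the sign of the correction term, and verifying that the action bound $\mc A(\alpha_\pm)<L$ propagates correctly to the $\mc A < L/d$ hypothesis of $L$-tameness when one passes from $dC_k$ to the embedded curve $C_k$; everything after that is a recombination of the standard ECH index identities plus Proposition~\ref{prop:J0}.
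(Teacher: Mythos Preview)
Your overall strategy is close to the paper's, but there are two genuine gaps.

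First, the ``superadditivity'' you invoke in Step~1 is precisely the paper's Proposition~\ref{prop:5.1}: $I(\current+\current') \ge I(\current) + I(\current') + 2\,\current\star_L\current'$, where the self-pairing is defined by $C\star_L C = \tfrac{1}{2}\bigl(2g(C)-2+\op{ind}(C)+h(C)+2e_L(C)+4\delta(C)\bigr)$. The correction term is \emph{not} ``manifestly nonnegative'': for distinct components $C_i\star_L C_j\ge 0$ by intersection positivity, but the self-term $C\star_L C$ can be negative, and making it nonnegative is exactly what $L$-tameness does (for embedded $C$). So the $e_L$ contribution is not an add-on to an already-nonnegative correction; it is built into the definition of $\star_L$, and Proposition~\ref{prop:5.1} with this enhanced pairing is a new ingredient requiring the writhe/linking estimates at $L$-positive and $L$-negative elliptic orbits carried out in \S\ref{sec:5.1}. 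The second bullet of (b) comes from that analysis (it is Proposition~\ref{prop:5.1}(b), equality forcing the relevant linking terms to vanish), not from the $L$-tameness inequality \eqref{eqn:Ltame} for a single component.

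Second, your Step~2 has a logical slip at the end: from $I(C_k)\ge 0$ you jump to $I(\current)\ge\sum_k d_k I(C_k)$, but that inequality requires $C_k\star_L C_k\ge 0$ for \emph{every} $k$, whereas your contradiction argument only establishes it for the single component with $I(d_kC_k)\le 0$. The paper instead argues directly: for each $k$, either $I(d_kC_k)>0$, or $I(d_kC_k)\le 0$, in which case $L$-tameness gives $C_k\star_L C_k\ge 0$ and then Proposition~\ref{prop:5.1}(a) yields $I(d_kC_k)\ge d_k I(C_k)\ge 0$. Here $I(C_k)\ge 0$ follows immediately from genericity ($\op{ind}(C_k)\ge 0$) and the index inequality \eqref{eqn:indineq}; your detour through Proposition~\ref{prop:J0} and an ``$I=J_0+\cdots$'' identity is unnecessary and does not obviously produce this bound. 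Once $I(d_kC_k)\ge 0$ for all $k$, summing together with the nonnegative cross-terms gives (a); equality then forces every $I(d_kC_k)=0$ and all cross-terms to vanish, from which the three bullets of (b) follow via Proposition~\ref{prop:5.1}(a),(b).
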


\subsection{The ECH index of multiple covers and unions}
\label{sec:5.1}

The proof of Proposition~\ref{prop:key} uses a lower bound on the ECH index of multiply covered holomorphic curves and unions thereof in cobordisms, which we now state in Proposition~\ref{prop:5.1} below. Continue to fix a strong symplectic cobordism $(X,\omega)$ and a cobordism-admissible almost complex structure $J$ on $\overline{X}$.

\begin{definition}
\label{def:starL}
If $L>0$, and if $\current=\sum_id_iC_i\in\mc{M}^J(\alpha_+,\alpha_-)$ and $\current'=\sum_jd_j'C_j'\in\mc{M}^J(\alpha_+',\alpha_-')$ are $J$-holomorphic currents with $\mc{A}(\alpha_+\alpha_+')<L$ and $\mc{A}(\alpha_-\alpha_-') < L$, define an integer
\[
\current\star_L\current' = \sum_i\sum_jd_id_j'C_i\star_L C_j',
\]
where the integer $C_i\star_L C_j'$ is defined as follows:
\begin{itemize}
\item
If $C,C'$ are somewhere injective, irreducible, and distinct, then $C\star_L C'$ is the algebraic count of intersections of $C$ and $C'$. Note that there are only finitely many such intersections by \cite[Cor.\ 2.5]{siefring}. By intersection positivity, $C\star_L C'\ge 0$, with equality if and only if $C$ and $C'$ do not intersect.
\item
If $C$ is somewhere injective and irreducible, then
\begin{equation}
\label{eqn:CstarC}
C\star_L C = \frac{1}{2}\left(2g(C) - 2 + \op{ind}(C) + h(C) + 2e_L(C) + 4\delta(C)\right)
\end{equation}
where $\delta(C)$ is the count of singularities of $C$ with positive integer weights that appears in the relative adjunction formula, see \cite[\S3.3]{bn}. In particular, $\delta(C)\ge 0$, with equality if and only if $C$ is embedded.
\end{itemize}
\end{definition}

\begin{proposition}
\label{prop:5.1}
If $\current=\sum_id_iC_i$ and $\current'=\sum_jd_j'C_j'$ are $J$-holomorphic currents as in Definition~\ref{def:starL}, then:
\begin{description}
\item{(a)}
\[
I(\current + \current') \ge I(\current) + I(\current') + 2\current \star_L \current'.
\]
\item{(b)}
If equality holds in (a), and if $C_i\neq C_j'$, then $C_i$ and $C_j'$ do not both have positive ends at covers of the same $L$-negative elliptic orbit, and $C_i$ and $C_j'$ do not both have negative ends at covers of the same $L$-positive elliptic orbit.
\end{description}
\end{proposition}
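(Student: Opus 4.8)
The plan is to deduce Proposition~\ref{prop:5.1} from a more fundamental relative intersection theory identity together with the adjunction formula, much as in the closed and symplectization settings. First I would recall the relative intersection pairing for holomorphic currents in the completed cobordism $\overline X$: for currents $\current,\current'$ representing classes $Z,Z'\in H_2(\overline X,\alpha,\beta)$ one has the $\tau$-dependent intersection number $Q_\tau(Z,Z')$, and for somewhere injective distinct irreducible curves $C,C'$ the quantity $Q_\tau(C,C')$ differs from the honest algebraic intersection count $C\cdot C'$ by a sum of ``asymptotic linking'' correction terms $\ell_\tau$ at the common ends, coming from Siefring's asymptotics (\cite{siefring}). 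The key point is that these correction terms are controlled by the writhe/linking bounds: at a positive end the relevant term is bounded above by the contribution a trivial cylinder would make, with the sign of the discrepancy governed by whether the orbit is elliptic with small positive or small negative rotation angle — this is precisely where the $L$-positive/$L$-negative dichotomy and the hypothesis $\mc{A}(\alpha_\pm\alpha_\pm')<L$ enter, since $L$-positivity of $\gamma$ forces $\lfloor m\theta\rfloor$-type quantities to behave like those for rotation angle $0^+$ for all multiplicities $m$ with $m\mc A(\gamma)<L$. Organizing these inequalities gives, for the cross terms, $Q_\tau(C_i,C_j')$ compared with $C_i\cdot C_j'$ plus an error absorbed into the ECH-index Conley--Zehnder terms, and for a diagonal term $Q_\tau(C_i,C_i)$ compared with the adjunction-type expression.

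The main steps, in order, would be: (1) expand $I(\current+\current')$ using bilinearity of $c_\tau$ and $Q_\tau$ and the fact that $CZ^I_\tau$ of a sum of orbit sets is \emph{super}additive — i.e. $CZ^I_\tau(\alpha\alpha') \ge CZ^I_\tau(\alpha) + CZ^I_\tau(\alpha')$ up to exactly the error terms one wants — obtaining $I(\current+\current') = I(\current)+I(\current') + 2\sum_{i,j}d_id_j' Q_\tau(C_i,C_j') + (\text{CZ cross terms})$; (2) for $i,j$ with $C_i\neq C_j'$, bound $Q_\tau(C_i,C_j')$ below by $C_i\star_L C_j' = C_i\cdot C_j'$ using Siefring's intersection theory plus the $L$-tameness-flavored linking estimates, tracking that the CZ cross terms exactly make up the difference, with equality iff the asymptotic linking contributions all vanish — which forces $C_i,C_j'$ to share no covers of $L$-negative elliptic orbits on the positive side nor $L$-positive elliptic orbits on the negative side (this is part (b)); (3) for $i=j$ (the case $C_i=C_j'$, contributing $d_id_i' (C_i\star_L C_i)$), apply the relative adjunction formula of \cite[\S3.3]{bn} together with Proposition~\ref{prop:J0}'s underlying writhe bounds to identify $Q_\tau(C_i,C_i) + (\text{self CZ term})$ with $2g(C_i)-2+\op{ind}(C_i)+h(C_i)+2e_L(C_i)+4\delta(C_i)$, i.e. with $2(C_i\star_L C_i)$; (4) assemble (2) and (3) into the stated inequality $I(\current+\current')\ge I(\current)+I(\current')+2\current\star_L\current'$, and read off the equality characterization for (b).

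The hard part will be Step 2 and Step 3: making the asymptotic-linking correction terms rigorous and checking they have the right sign under the $L$-positive/$L$-negative hypothesis for \emph{all} the multiplicities that can actually occur, given only the action bound $\mc{A}(\alpha_\pm\alpha_\pm') < L$. Concretely, if a component $C_i$ has an end at the $k$-fold cover of an embedded elliptic orbit $\gamma$, then $k\mc{A}(\gamma) < L$, so $L$-positivity of $\gamma$ (rotation angle in $(0,\mc A(\gamma)/L)$) guarantees $k\theta \in (0,1) \bmod 1$, which pins down the relevant floor functions and hence the sign of each linking correction; the bookkeeping that every such correction across all pairs and all ends is nonnegative (and the identification of exactly when it vanishes) is the delicate combinatorial core. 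This is also where one must be careful that the definition of $e_L(C)$ — counting $L$-negative elliptic ends on top and $L$-positive elliptic ends on the bottom — is precisely the one that makes the diagonal formula \eqref{eqn:CstarC} an identity rather than an inequality. Everything else is a routine repackaging of the closed-manifold and symplectization arguments from \cite{ir,siefring,bn} into the cobordism setting, which has already been set up in \S\ref{sec:cob}.
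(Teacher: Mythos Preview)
Your proposal is essentially the paper's own argument: the paper says it follows \cite[Thm.~5.1]{ir} with minor modifications, reducing everything to a local inequality \eqref{eqn:5.6} at each Reeb orbit, and then checks the new $L$-negative elliptic case by choosing $\tau$ so that $CZ_\tau(\gamma^k)=-1$ and invoking exactly the writhe bound $w_\tau(\zeta_a)\le -m_a+n_a$ and linking bound $\ell_\tau(\zeta_a,\zeta_b)\le -\sum_{i,j}\min(q_{a,i},q_{b,j})$ that your Steps~2--3 describe. One small correction: in Step~3 the diagonal contribution is an \emph{inequality}, not an identity --- the writhe bound is strict unless the curve has a single end at that orbit --- so the $e_L$ term in \eqref{eqn:CstarC} is sized to make the inequality go through, not to force equality; this is harmless for (a), and for (b) only the off-diagonal linking terms need to be tracked for equality, exactly as you say.
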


\begin{remark}
Proposition~\ref{prop:5.1}(a) is a slight enhancement of \cite[Thm.\ 5.1]{ir}. The new element here is the $e_L(C)$ term in \eqref{eqn:CstarC}.
\end{remark}

\subsection{Proof of Proposition~\ref{prop:key}}

(a) For each $k$, since $C_k$ is somewhere injective and $J$ is generic, it follows that $\op{ind}(C_k)\ge 0$. By the index inequality \eqref{eqn:indineq}, we then have
\begin{equation}
\label{eqn:Ick}
I(C_k)\ge 0.
\end{equation}

Since $(X,\omega,J)$ is $L$-tame, if $I(d_kC_k)\le 0$ then
\begin{equation}
\label{eqn:gCk}
C_k \star_L C_k \ge 0.
\end{equation}
(When $C_k$ is embedded this follows from the definition of $L$-tame; and when $C_k$ is not embedded, $C_k \star_L C_k > 0$ because of the $\delta(C)$ term in \eqref{eqn:CstarC}.) Together with \eqref{eqn:Ick} and Proposition~\ref{prop:5.1}(a), this implies that $I(d_kC_k)\ge 0$. Thus
\begin{equation}
\label{eqn:regardless}
I(d_kC_k)\ge 0
\end{equation}
regardless.

By Proposition~\ref{prop:5.1}(a) again, we have
\begin{equation}
\label{eqn:5.1again}
I(\current) \ge \sum_kI(d_kC_k) + \sum_{i\neq j} d_id_j C_i\star_L C_j.
\end{equation}
Since $C_i\star_L C_j\ge 0$ for $i\neq j$, we conclude that $I(\current)\ge 0$.

(b) Suppose that $I(\current)=0$. Then by \eqref{eqn:regardless} and \eqref{eqn:5.1again} we have
\begin{equation}
\label{eqn:IdC0}
I(d_kC_k)=0
\end{equation}
for each $k$. Also, since equality holds in \eqref{eqn:5.1again}, it follows from Proposition~\ref{prop:5.1}(b) that the second bullet in Proposition~\ref{prop:key}(b) holds.

To prove the first bullet in Proposition~\ref{prop:key}(b), note that for each $k$, it follows from equation \eqref{eqn:IdC0} and the definition of $L$-tame that the inequality \eqref{eqn:gCk} holds.  This, together with equation \eqref{eqn:IdC0} and Proposition~\ref{prop:5.1}(a), implies that $I(C_k)\le 0$. Hence by \eqref{eqn:Ick} we have $I(C_k)=0$.

The third bullet in Proposition~\ref{prop:key}(b) holds because, since we have seen above that \eqref{eqn:gCk} holds for each $k$, it follows from Proposition~\ref{prop:5.1}(a) that
\[
0=I(\current) \ge I\left(\sum_kd_k'C_k\right) + I\left(\sum_k(d_k-d_k')C_k\right),
\]
and both of the sums on the right are nonnegative.

\subsection{Proof of Proposition~\ref{prop:5.1}}

We follow the proof of \cite[Thm.\ 5.1]{ir} with minor modifications.

Let $\{C_a\}$ denote the union of the sets of components of $\current$ and $\current'$.
Let $d_a$ denote the multiplicity of $C_a$ in $\current$, and let $d_a'$ denote the multiplicity of $C_a$ in $\current'$.

Let $\gamma$ be an embedded elliptic Reeb orbit such that some $C_a$ has a positive end at a cover of $\gamma$. To prove (a), as in \cite[Eq.\ (5.6)]{ir}, it is enough to prove the following inequality (one also needs an analogous inequality for the negative ends which follows by symmetry):
\begin{gather}
\label{eqn:5.6}
\left(\sum_{k=1}^{M+M'} - \sum_{k=1}^M - \sum_{k=1}^{M'}\right) CZ_\tau(\gamma^k)
\quad\quad\quad\quad\quad\quad\quad\quad\quad\quad
\quad\quad\quad\quad\quad\quad\quad\quad\quad\quad
\\
\nonumber
\quad\quad\quad
\ge 2\sum_{a\neq b}d_ad_b'\ell_\tau(\zeta_a,\zeta_b) + \sum_a d_ad_a'\left(-\epsilon n_a + 2\epsilon_Lm_a + \sum_{i=1}^{n_a}CZ_\tau(\gamma^{q_{a,i}}) + 2w_\tau(\zeta_a)\right).
\end{gather}
Here the notation is as follows. For each $a$, let $n_a$ denote the number of positive ends of $C_a$ at covers of $\gamma$, and denote their covering multiplicities by $q_{a,1},\ldots,q_{a,n_a}$. Let $m_a=\sum_{i=1}^{n_a}q_{a,i}$ denote the total covering multiplicity of the positive ends of $C_a$ at covers of $\gamma$. Let $M=\sum_ad_am_a$ and $M'=\sum_ad_a'm_a$. Let $\zeta_a$ denote the braid in a neighborhood of $\gamma$ determined by the asymptotics of the positive ends of $C_a$, see \cite[\S3.3]{bn}. Let $w_\tau(\zeta_a)$ denote writhe of this braid with respect to a trivialization $\tau$ of $\xi|_\gamma$, see \cite[\S3.3]{bn}. For $a\neq b$, let $\ell_\tau(\zeta_a,\zeta_b)$ denote the linking number of the braids $\zeta_a$, $\zeta_b$ with respect to $\tau$, see \cite[\S5.1]{bn}. Let $\epsilon$ denote $1$ if $\gamma$ is elliptic and $0$ otherwise. Finally, let $\epsilon_L$ denote $1$ if $\gamma$ is elliptic and $L$-negative, and $0$ otherwise.

The inequality \eqref{eqn:5.6} is the same as \cite[Eq.\ (5.6)]{ir} except for the new $\epsilon_L$ term. In particular, if $\gamma$ is not both elliptic and $L$-negative, then \eqref{eqn:5.6} is proved in \cite{ir}. So we just need to prove \eqref{eqn:5.6} when $\gamma$ is elliptic and $L$-negative.

In this case, we can choose the trivialization $\tau$ so that $CZ_\tau(\gamma^k)=-1$ for all $k<L/\mc{A}(\gamma)$, see \cite[Eq.\ (3.2)]{bn}. It follows that the left hand side of \eqref{eqn:5.6} equals zero.
The linking bound in \cite[Lem.\ 5.5(b)]{bn} implies that if $a\neq b$ then
\[
\ell_\tau(\zeta_a,\zeta_b) \le -\sum_{i=1}^{n_a}\sum_{j=1}^{n_b} \min(q_{a,i},q_{b,j}).
\]
In particular, $\ell_\tau(\zeta_a,\zeta_b)<0$ whenever $n_an_b\neq 0$. Thus the first sum on the right hand side of \eqref{eqn:5.6} is less than or equal to zero, with equality only if $d_ad_b'=0$ whenever $a\neq b$ and $n_an_b\neq 0$. Finally, the writhe bound in \cite[Eq. (3.9)]{bn} implies that 
\[
w_\tau(\zeta_a) \le -m_a+n_a
\]
(with equality only if $n_a=1$). Since $CZ_\tau(\gamma^{q_{a,i}})=-1$ for each $i$, it follows that the second sum on the right hand side of \eqref{eqn:5.6} is less than or equal to zero. This completes the proof of \eqref{eqn:5.6} and hence of part (a).

The above paragraph also shows that if $\gamma$ is elliptic and $L$-negative, then equality holds in \eqref{eqn:5.6} only if
$d_ad_b'=0$ whenever $a\neq b$ and $n_an_b\neq 0$. This (together with its analogue for negative ends) implies (b).

\section{The boundary of a convex toric domain}
\label{sec:perturb}

With the above generalities about ECH out of the way, we now return to the specific situation in Theorem~\ref{thm:convex}. In this section
we study the ECH chain complex of the boundary of a suitably perturbed convex toric domain. The information we need for Theorem~\ref{thm:convex} is extracted in Lemmas~\ref{lem:perturbation} and \ref{lem:supplement} below.

\subsection{The perturbation}
\label{sec:perturbation}

If $\nu$ is an edge of a convex integral path $\Lambda$, let $m(\nu)\in\{1,\ldots\}$ denote the multiplicity of $\nu$, namely one plus the number of lattice points in the interior of $\nu$.

\begin{definition}
An {\em extended convex generator\/} is a convex integral path $\Lambda$ such that:
\begin{itemize}
\item Each edge $\nu$ of $\Lambda$ is labeled by an integer $l(\nu)\in\{0,\ldots,m(\nu)\}$.
\item Horizontal and vertical edges are labeled by $0$.
\end{itemize}
\end{definition}

Note that a convex generator determines an extended convex generator, in which each `$e$' label is replaced by $0$, and each `$h$' label is replaced by $1$. Like a convex generator, an extended convex generator can be represented by a commutative formal product of symbols $e_{a,b}$ and $h_{a,b}$. Now, however, the exponent of $h_{a,b}$ can be greater than one. If $a$ and $b$ are relatively prime nonnegative integers, then an edge with displacement vector $(ma,-mb)$ labeled by the integer $l$ corresponds to the formal product $e_{a,b}^{m-l}h_{a,b}^l$.

If $\Lambda$ is an extended convex generator, define its ECH index $I(\Lambda)$ by \eqref{eqn:ILambda}, where now $h(\Lambda)$ is defined as follows:

\begin{definition}
If $\Lambda$ is an extended convex generator, define $h(\Lambda)$ to be the sum of the integer labels of all of its edges. Define $e(\Lambda)$ to be the number of edges $\nu$ such that $l(\nu)<m(\nu)$. In terms of formal products, $h(\Lambda)$ is the total exponent of all $h_{a,b}$ factors, and $e(\Lambda)$ is the number of distinct factors $e_{a,b}$ that appear in $\Lambda$.
\end{definition}

If $X_\Omega$ is a convex toric domain and $\Lambda$ is an extended convex generator, define the action $A_\Omega(\Lambda)$ by \eqref{eqn:ALambda} as before.

\begin{definition}
\label{def:smooth}
A convex toric domain $X_\Omega$ is {\em smooth\/} if the function $f$ defining $\Omega$ in \eqref{eqn:Omega} is smooth, $f(A)=0$, and $f'$ is constant near $0$ and $A$.
\end{definition}

If $X_\Omega$ is a smooth convex toric domain, then $Y=\partial X_\Omega$ is a smooth star-shaped hypersurface in $\R^4$, so $Y$ is diffeomorphic to $S^3$ and the 1-form $\lambda_{std}$ in \eqref{eqn:lambdastd} restricts to a contact form on $Y$. The following lemma will be proved in \S\ref{sec:ppl}:

\begin{lemma}
\label{lem:perturbation}
Let $X_\Omega$ be a smooth convex toric domain with boundary $Y$. Then for every $\epsilon,L>0$, there is a contact form $\lambda$ on $Y$ with the following properties:
\begin{description}
\item{(a)}
$\lambda$ is nondegenerate.
\item{(b)}
$\lambda = f\left(\lambda_{std}|_Y\right)$ where $f:Y\to \R^{>0}$ is a smooth function with $\|f-1\|_{C^0}<\epsilon$.
\item{(c)}
All hyperbolic orbits with action less than $L$ are positive hyperbolic, and all embedded elliptic orbits with action less than $L$ are $L$-positive, see Definition~\ref{def:lp}.
\item{(d)}
There is a bijection $\imath$, from the set of extended convex generators $\Lambda$ with $A_\Omega(\Lambda)<L$ to the set of orbit sets $\alpha$ for $\lambda$ with $\mc{A}(\alpha) < L$, such that if $\alpha=\imath(\Lambda)$, then:
\begin{description}
\item{(i)}
$\alpha$ is admissible if and only if $\Lambda$ is a convex generator.
\item{(ii)}
$|\mc{A}(\alpha) - A_\Omega(\Lambda)| < \epsilon$.
\item{(iii)}
$I(\alpha) = I(\Lambda)$.
\item{(iv)}
$J_0(\alpha) = I(\Lambda) - 2x(\Lambda) - 2y(\Lambda) - e(\Lambda)$.
\end{description}
\end{description}
\end{lemma}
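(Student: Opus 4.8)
The plan is to construct $\lambda$ explicitly as a Morse--Bott perturbation of $\lambda_{std}|_Y$ and then read off the resulting chain complex, following closely the analogous computation for concave toric domains in \cite{concave}. For the unperturbed contact form $\lambda_{std}|_Y$, the Reeb flow is degenerate but Morse--Bott: apart from the two ``axis'' Reeb orbits lying over the points where $\partial\Omega$ meets the axes, every closed Reeb orbit lies in a Morse--Bott torus $T_v$, one for each point of $\partial\Omega$ at which the outward normal is rational, indexed by the primitive integer vector $v=(a,b)$ with $a,b\ge 0$, so that $T_v$ consists of embedded Reeb orbits in the homology class of $v$; the $\lambda_{std}$-action of the simple orbit in $T_v$ is exactly $\vec{\nu}\times p_{\Omega,\nu}$ for a segment $\nu$ of direction $(a,-b)$, cf.\ \eqref{eqn:ALambda}. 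Only finitely many tori $T_v$, and finitely many iterates of the axis orbits, have action below $L$.

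I would then perturb in two steps, realizing everything as $f\,\lambda_{std}|_Y$ with $\|f-1\|_{C^0}<\epsilon$. First, deform $\Omega$ near each of its two corners on the axes so that, near the corresponding axis orbit, the contact form is modeled on a very thin ellipsoid corner; this makes each axis orbit nondegenerate and elliptic with rotation angle an arbitrarily small positive number. Second, for each $T_v$ with action below $L$, apply the standard Morse--Bott perturbation associated to a perfect Morse function on the circle of simple orbits in $T_v$, replacing $T_v$ by one elliptic orbit $e_v$ with prescribed small positive rotation angle and one positive hyperbolic orbit $h_v$. Taking all perturbation parameters small compared with $\epsilon$, with $1/L$, and with the gaps between the finitely many relevant actions gives (a), (b), and (c), and arranges that \emph{every} closed Reeb orbit of action below $L$ is an iterate of an axis orbit or of some $e_v$ or $h_v$. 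Because a convex integral path uses each primitive slope on at most one edge, it then follows that an orbit set $\alpha$ with $\mc{A}(\alpha)<L$ is precisely a product, over finitely many primitive directions, of factors $e_v^{\,m-l}h_v^{\,l}$ together with iterates of the two axis orbits (which play the role of the directions $(1,0)$ and $(0,1)$), i.e.\ exactly the data of an extended convex generator $\Lambda=\imath^{-1}(\alpha)$; and $\alpha$ is admissible precisely when every such $l\le 1$, i.e.\ precisely when $\Lambda$ is a convex generator, which is (d)(i).

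What remains is index bookkeeping, and this is the main obstacle. Property (ii) is straightforward: $\mc{A}(\imath(\Lambda))$ is the sum over the edges of $\Lambda$ of the actions of the corresponding orbits, and each of these equals the contribution $\vec{\nu}\times p_{\Omega,\nu}$ to $A_\Omega(\Lambda)$ up to an error controlled by $\|f-1\|_{C^0}$ and by the size of the Morse--Bott perturbation, both of which we take small. For (iii) and (iv) one takes $Z\in H_2(Y,\alpha,\emptyset)$ to be the class projecting to the region enclosed by $\Lambda$ and the axes, together with the trivialization $\tau$ of $\xi$ coming from the toric structure; then $c_\tau(Z)$ and $Q_\tau(Z)$ reduce to elementary lattice-point counts (essentially Pick's theorem), while the Conley--Zehnder contributions of the positive hyperbolic orbits and of the iterates of the elliptic orbits are computed by the standard formulas --- and here the $L$-positivity from (c) is exactly what forces the floor terms $\lfloor k\theta_v\rfloor$ to vanish for every iterate that occurs. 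Substituting into \eqref{eqn:defI} and \eqref{eqn:defJ0} and comparing with \eqref{eqn:ILambda} then yields $I(\alpha)=2(L(\Lambda)-1)-h(\Lambda)=I(\Lambda)$ and $J_0(\alpha)=I(\Lambda)-2x(\Lambda)-2y(\Lambda)-e(\Lambda)$. The computation runs formally parallel to the concave case treated in \cite{concave}, with ``inside'' and ``outside'' the generator interchanged and the signs of the rotation angles ($L$-positive rather than $L$-negative) reversed; the real difficulty is organizational --- keeping the trivialization, the writhe and linking conventions, and the various floor functions consistent so that the lattice-point identities come out exactly in the form \eqref{eqn:ILambda}.
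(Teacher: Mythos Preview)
Your proposal is correct and follows essentially the same route as the paper: reduce to a nice $\Omega$, identify the two axis elliptic orbits and the Morse--Bott tori $T_{a,b}$, perturb each torus into a pair $(e_{a,b},h_{a,b})$, and then compute $I$ and $J_0$ via $c_\tau(\alpha)=x(\Lambda)+y(\Lambda)$, $Q_\tau(\alpha)=2\op{Area}(R)$, Pick's theorem, and the CZ formulas with all floors vanishing by $L$-positivity. One sharpening: the small positive rotation angle of $e_{a,b}$ is not really something you get to ``prescribe'' --- it is forced by the convexity of $\Omega$, since $f''<0$ makes the linearized return map of the Morse--Bott orbits a positive shear $\begin{pmatrix}1&0\\1&1\end{pmatrix}$, whose elliptic perturbations have positive rotation; this is exactly the mechanism behind the sign flip relative to the concave case that you allude to at the end.
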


Note that in part (d) above, $I(\alpha)\in\Z$ is defined by \eqref{eqn:absoluteZgrading}, and similarly $J_0(\alpha)\in\Z$ is defined by 
\begin{equation}
\label{eqn:absJ0}
J_0(\alpha)=J_0(\alpha,\emptyset,Z),
\end{equation}
where $Z$ is the unique class in $H_2(Y,\alpha,\emptyset)$.

\subsection{ECH capacities of convex toric domains}

The proof of Theorem~\ref{thm:convex} will need the following consequence of Lemma~\ref{lem:perturbation}:

\begin{lemma}
\label{lem:supplement}
Let $X_\Omega$ be a smooth convex toric domain, and let $\Lambda$ be a convex generator which is minimal for $X_\Omega$. If $L>A_\Omega(\Lambda)$, if $\epsilon>0$ is sufficiently small, and if $\lambda$ is a contact form on $Y=\partial X_\Omega$ provided by Lemma~\ref{lem:perturbation}, then for any generic $\lambda$-compatible almost complex structure $J$ on $\R\times Y$, the admissible orbit set $\imath(\Lambda)$ is a cycle in $ECC^L(Y,\lambda,0,J)$ which represents a nontrivial homology class in $ECH(Y,\lambda,0)$.
\end{lemma}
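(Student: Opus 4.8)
The plan is to identify $\imath(\Lambda)$ with the generator of $ECH_{2k}(Y,\lambda,0)\cong\Z/2$, where $2k=I(\Lambda)=I(\imath(\Lambda))$ by Lemma~\ref{lem:perturbation}(d)(iii) and the computation \eqref{eqn:echs3}. The ingredients are Lemma~\ref{lem:perturbation}, the monotonicity and conformality of ECH capacities from \cite{qech}, and the identity $c_k(X_\Omega)=A_\Omega(\Lambda)$ from the remark following the definition of ``minimal'' (a consequence of Proposition~\ref{prop:dan}). The first point is that $c_k(Y,\lambda)$ is close to $A_\Omega(\Lambda)$: since $\lambda=f(\lambda_{std}|_Y)$ with $\|f-1\|_{C^0}<\epsilon$ by Lemma~\ref{lem:perturbation}(b), the region of $\R\times Y$ between the graphs of $f$ and the constant $1+\epsilon$ (resp.\ between $1-\epsilon$ and $f$) is an exact symplectic cobordism, so monotonicity and conformality of $c_k$ give $(1-\epsilon)A_\Omega(\Lambda)\le c_k(Y,\lambda)\le(1+\epsilon)A_\Omega(\Lambda)$. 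Hence, for $\epsilon$ small, one may fix a threshold $L_0$ slightly larger than $A_\Omega(\Lambda)$ with $\mc{A}(\imath(\Lambda))<L_0<L$ and $c_k(Y,\lambda)<L_0$.

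The heart of the matter is to show, after shrinking $\epsilon$ (and adjusting $L_0$), that $\imath(\Lambda)$ is the \emph{only} generator of $ECC^{L_0}(Y,\lambda,0,J)$ of ECH index $2k$. By Lemma~\ref{lem:perturbation}(d), such generators are the $\imath(\Lambda')$ for convex generators $\Lambda'$ with $I(\Lambda')=2k$ and $A_\Omega(\Lambda')$ just below $L_0$. From $I(\Lambda')=2(L(\Lambda')-1)-h(\Lambda')$ with $h(\Lambda')\ge 0$ one gets $L(\Lambda')\ge k+1$, so by Proposition~\ref{prop:dan} and the monotonicity $c_0\le c_1\le\cdots$, $A_\Omega(\Lambda')\ge c_{L(\Lambda')-1}(X_\Omega)\ge c_k(X_\Omega)=A_\Omega(\Lambda)$. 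Because only finitely many convex generators have action below any fixed bound, taking $\epsilon$ small compared with the gap between $A_\Omega(\Lambda)$ and the next value of $A_\Omega$ on index-$2k$ convex generators forces $A_\Omega(\Lambda')=A_\Omega(\Lambda)$; and an index-$2k$ convex generator with all edges labeled $e$ and $L=k+1$ equals $\Lambda$ by the uniqueness clause in the definition of ``minimal.'' The remaining candidates for $\Lambda'$ differ from $\Lambda$ (or from some longer minimal convex integral path of the same $A_\Omega$-value, which exists only when $c_{k+1}(X_\Omega)=c_k(X_\Omega)$) by the presence of some $h$ labels; these must be kept out of the window $[0,L_0)$ using the quantitative behavior of the perturbation built in \S\ref{sec:ppl} --- the feature responsible for the $L$-positivity of the short elliptic orbits --- namely that turning an $e$ label into an $h$ label on an edge strictly increases the symplectic action of the corresponding orbit set.

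Granting this, $ECC^{L_0}_{2k}(Y,\lambda,0,J)=\Z/2\cdot\imath(\Lambda)$, so $ECH^{L_0}_{2k}(Y,\lambda,0)$ is a subquotient of $\Z/2$; and $c_k(Y,\lambda)<L_0$ means, by definition of $c_k$, that the degree-$2k$ generator of $ECH(Y,\lambda,0)$ lies in the image of the inclusion-induced map $ECH^{L_0}(Y,\lambda,0)\to ECH(Y,\lambda,0)$. Hence $ECH^{L_0}_{2k}\to ECH_{2k}\cong\Z/2$ is surjective, so $ECH^{L_0}_{2k}=\Z/2$, which in turn forces $\partial\imath(\Lambda)=0$ and that $[\imath(\Lambda)]$ maps isomorphically onto the degree-$2k$ generator of $ECH(Y,\lambda,0)$. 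Since $\mc{A}(\imath(\Lambda))<L_0<L$, this exhibits $\imath(\Lambda)$ as a cycle in $ECC^L(Y,\lambda,0,J)$ whose image in $ECH(Y,\lambda,0)$ is the nonzero degree-$2k$ generator, as desired.

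The step I expect to be the real obstacle is the uniqueness claim of the second paragraph. Reducing to $A_\Omega(\Lambda')=A_\Omega(\Lambda)$ and classifying such $\Lambda'$ is combinatorics, but excluding from the action window $[0,L_0)$ those $\Lambda'$ carrying extra $h$ labels --- which requires coordinating the size $\epsilon$ of the perturbation, the threshold $L_0$, and the resulting change of action when an $e$ label becomes an $h$ label --- uses the explicit construction in \S\ref{sec:ppl} in an essential way, rather than merely properties (a)--(d) of Lemma~\ref{lem:perturbation}; everything else is formal bookkeeping with ECH capacities and \eqref{eqn:echs3}.
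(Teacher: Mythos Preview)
Your overall strategy matches the paper's: show that $\imath(\Lambda)$ is the \emph{unique} admissible orbit set of ECH index $2k$ with action below a suitable threshold, and then use the definition of $c_k(Y,\lambda)$ to conclude. The difference, and the genuine gap, is in how you try to establish uniqueness against competitors $\Lambda'$ carrying `$h$' labels.

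Your proposed mechanism is false. In the perturbation constructed in \S\ref{sec:ppl}, the hyperbolic orbit $h_{a,b}$ has \emph{smaller} action than the elliptic orbit $e_{a,b}$, not larger (see the footnote in Step~3 of that proof: there are holomorphic cylinders from $e_{a,b}$ to $h_{a,b}$, forcing $\mc{A}(e_{a,b})>\mc{A}(h_{a,b})$). So replacing an `$e$' label by an `$h$' label \emph{decreases} the action of the corresponding orbit set, and cannot push such $\Lambda'$ above your threshold $L_0$. The ``quantitative behavior of the perturbation'' you appeal to does the opposite of what you need.

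The paper closes this gap combinatorially, not analytically, via Lemma~\ref{lem:dan'}: if $\Lambda'$ is any convex generator with $I(\Lambda')=2k$ and $\Lambda'\neq\Lambda$, then $A_\Omega(\Lambda')>A_\Omega(\Lambda)$ strictly. The point is that since $I(\Lambda')$ is even, if $h(\Lambda')>0$ then $h(\Lambda')\ge 2$, so $\Lambda'$ has at least two edges labeled `$h$' and hence at least one corner; ``rounding'' that corner decreases $L$ by one and removes two `$h$' labels, preserving $I$ while strictly decreasing $A_\Omega$. This yields a positive gap $\delta$ depending only on $\Omega$ and $k$, after which choosing $\epsilon<\delta/2$ in Lemma~\ref{lem:perturbation} finishes the argument exactly as you describe. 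No property of the perturbation beyond (a)--(d) is needed.
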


To prove Lemma~\ref{lem:supplement}, we will use the following formula for the ECH capacities of convex toric domains.

\begin{proposition}
\label{prop:dan}
Let $X_\Omega$ be a convex toric domain. Then
\begin{equation}
\label{eqn:dan}
c_k(X_\Omega) = \min\{A_{\Omega}(\Lambda)\mid L(\Lambda)= k+1\}
\end{equation}
where the minimum is over convex integral paths $\Lambda$.
\end{proposition}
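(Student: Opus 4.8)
The plan is to prove Proposition~\ref{prop:dan} by establishing the two inequalities separately, using Lemma~\ref{lem:perturbation} together with the ECH-theoretic characterization of $c_k$ in terms of filtered ECH. First I would reduce to the case where $X_\Omega$ is a \emph{smooth} convex toric domain: both sides of \eqref{eqn:dan} are continuous (indeed monotone) under exhaustion of $\Omega$ by smooth convex domains from the inside and approximation from the outside, and $c_k$ is continuous with respect to such approximations by the inclusion-induced monotonicity of ECH capacities (and the trivial inclusion embeddings), so it suffices to prove the formula for smooth $X_\Omega$. The right-hand side of \eqref{eqn:dan} is, by definition of a convex integral path, exactly $\min\{A_\Omega(\Lambda) \mid \Lambda \text{ a convex generator with all edges labeled } e \text{ and } L(\Lambda) = k+1\}$, i.e.\ $A_\Omega(\Lambda_{\min})$ where $\Lambda_{\min}$ is a minimizer; note that by the example following the definition of ECH index, a convex integral path $\Lambda$ with $L(\Lambda) = k+1$ has $I(\Lambda) = 2k$ when all edges are `$e$'.

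Next I would fix $L$ slightly larger than the minimum value $M := \min\{A_\Omega(\Lambda) \mid L(\Lambda) = k+1\}$, pick $\epsilon$ small, and invoke Lemma~\ref{lem:perturbation} to get a nondegenerate contact form $\lambda$ on $Y = \partial X_\Omega$ with $\|f-1\|_{C^0} < \epsilon$ and the bijection $\imath$ between extended convex generators of action $< L$ and orbit sets of action $< L$, preserving the ECH index and matching admissibility with the convex-generator condition. Since $c_k(Y,\lambda)$ is bounded above and below by $(1\pm\epsilon)$ times $c_k(Y,\lambda_{std}|_Y) = c_k(X_\Omega)$, it is enough to compute $c_k(Y,\lambda)$ up to error $O(\epsilon)$ and let $\epsilon \to 0$. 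Recall $c_k(Y,\lambda)$ is the infimum of $L'$ such that the degree-$2k$ generator of $ECH(Y,\lambda,0)$ — which exists and is unique by \eqref{eqn:echs3} — lies in the image of $ECH^{L'}(Y,\lambda,0) \to ECH(Y,\lambda,0)$.

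For the upper bound $c_k(X_\Omega) \le M + O(\epsilon)$: I claim the admissible orbit set $\alpha_{\min} = \imath(\Lambda_{\min})$ is a cycle representing the degree-$2k$ generator. It has ECH index $2k$ by Lemma~\ref{lem:perturbation}(d)(iii) and action $< M + \epsilon$ by (d)(ii). It is a cycle because any generator $\beta$ with $\langle \partial \alpha_{\min}, \beta\rangle \ne 0$ would have $I(\beta) = 2k-1$ and strictly smaller action; but any such $\beta = \imath(\Lambda')$ has $I(\Lambda') = 2k-1$, and an odd-index convex generator must contain an `$h$' edge, forcing $\beta$ to be non-admissible — contradiction, since the differential lands in admissible generators. (More carefully: among generators of action $< M$, there are no admissible ones at all of index $2k-1$, since $L(\Lambda) = k+1$ would be forced by index $2k$, not $2k-1$; and the minimality of $M$ means there is nothing of action $\le M$ below $\alpha_{\min}$ except possibly higher-index things irrelevant to $\partial\alpha_{\min}$.) Nontriviality of $[\alpha_{\min}]$ in $ECH$ then follows because $ECH_{2k}(Y,\lambda,0) = \Z/2$ and $\alpha_{\min}$ is the unique index-$2k$ generator of action $< M + \epsilon \le L$ (again by minimality of $M$), so it cannot be a boundary within $ECC^L$; one must check it is not a boundary in the full complex either, which follows since any generator mapping onto it has index $2k+1$, hence contains an `$h$', but the relevant count with larger action can still be zero — here I would cite that $ECH^L \to ECH$ hitting the generator is what is needed, and $\alpha_{\min}$ generates $ECH^L_{2k}$. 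This gives $c_k \le M + O(\epsilon)$.

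For the lower bound $c_k(X_\Omega) \ge M - O(\epsilon)$: any chain representing the degree-$2k$ generator in $ECC^{L'}(Y,\lambda,0,J)$ must involve at least one admissible orbit set $\alpha$ of ECH index $2k$, and by Lemma~\ref{lem:perturbation}(d) such $\alpha = \imath(\Lambda)$ for a convex generator $\Lambda$ with $I(\Lambda) = 2k$; dropping `$h$' labels can only decrease the index-contributing lattice count in a controlled way, but in fact any convex generator with $I = 2k$ has $L(\Lambda) \le k+1$ with all edges `$e$' only if $L = k+1$ — more precisely, $I(\Lambda) = 2(L(\Lambda)-1) - h(\Lambda) = 2k$ gives $L(\Lambda) = k+1 + h(\Lambda)/2 \ge k+1$, and the underlying path (forgetting labels, which only increases the enclosed region) then has $A_\Omega \ge A_\Omega(\Lambda_{\min}) = M$ since enlarging the path increases the action. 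Thus $\mc{A}(\alpha) \ge M - \epsilon$, so $L' \ge M - \epsilon$, giving $c_k(Y,\lambda) \ge M - \epsilon$ and hence $c_k(X_\Omega) \ge M$ after letting $\epsilon \to 0$. Combining the two bounds yields \eqref{eqn:dan}.

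The main obstacle I anticipate is the careful bookkeeping in the lower bound showing that an admissible index-$2k$ generator $\alpha$ necessarily has $A_\Omega(\imath^{-1}(\alpha)) \ge M$: one needs that replacing `$h$' by `$e$' labels and that passing from a convex generator to its convex hull / the associated minimal path only increases action, combined with the index identity $I(\Lambda) = 2(L(\Lambda)-1) - h(\Lambda)$, to conclude $L \ge k+1$ and then invoke that a convex integral path with more enclosed lattice points has at least as large $A_\Omega$ — this monotonicity of $A_\Omega$ under enlargement of $\Omega$-tangent-supported paths is the geometric heart and should be a short convexity argument, but it must be stated cleanly. A secondary subtlety is verifying that $\alpha_{\min}$ is genuinely a nontrivial homology class (not just a nonzero chain) — this is exactly the content that is later abstracted into Lemma~\ref{lem:supplement}, so here it is cleanest to prove the proposition by a direct filtered-ECH dimension count using \eqref{eqn:echs3}: $ECH^{L}_{2k}(Y,\lambda,0)$ maps onto the rank-one $ECH_{2k}$, and $\alpha_{\min}$ is the unique generator of its degree and sufficiently small action, pinning down the capacity.
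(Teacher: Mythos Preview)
Your approach is genuinely different from the paper's. The paper does not compute $c_k$ from the ECH chain complex of $\partial X_\Omega$ at all: it quotes \cite[Cor.~A.5]{dcg}, which already gives \eqref{eqn:dan} with the minimum taken over a larger class of ``generalized convex integral paths'' (tangent angles allowed in $(\pi/2,-\pi)$), and then performs a short combinatorial reduction---replace initial/final edges of bad angle by a single horizontal/vertical edge, then round corners as in \cite{t3}---to show the minimum is unchanged when restricted to convex integral paths. A second proof via the Traynor trick and \cite[Thm.~4.14]{bn} is also sketched. So the paper's argument is essentially by citation plus a lattice-path manipulation; yours is an attempted direct computation via Lemma~\ref{lem:perturbation}.

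Your lower bound $c_k(X_\Omega)\ge M$ is essentially right, modulo the corner-rounding monotonicity you correctly flag (that a convex integral path with $L\ge k+1$ has $A_\Omega\ge M$; this is \cite[Lem.~2.14]{t3}, also used in Lemma~\ref{lem:dan'}).

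The upper bound, however, has a real gap. Your claim that $\alpha_{\min}=\imath(\Lambda_{\min})$ is a cycle because ``an odd-index convex generator must contain an `$h$' edge, forcing $\beta$ to be non-admissible'' is simply false: a single `$h$' label is perfectly admissible---$h_{1,1}$ is an admissible convex generator with $I=3$. Concretely, whenever $\Lambda_{\min}$ has a non-axis-parallel edge, relabeling that edge `$h$' yields an admissible convex generator $\Lambda'$ with $I(\Lambda')=2k-1$ and the \emph{same} underlying path; since $h_{a,b}$ has strictly smaller action than $e_{a,b}$ (footnote in Step~3 of the proof of Lemma~\ref{lem:perturbation}), one gets $\mc{A}(\imath(\Lambda'))<\mc{A}(\alpha_{\min})$, and nothing in Lemma~\ref{lem:perturbation} forces $\langle\partial\alpha_{\min},\imath(\Lambda')\rangle=0$. (For $X_\Omega=B(c)$, $k=2$, one in fact expects $\partial\,\imath(e_{1,1})=\imath(h_{1,1})$.) Your parenthetical ``more careful'' version does not repair this: it slips from $I=2k$ to $I=2k-1$ when asserting $L=k+1$ is ``forced''.

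You cannot patch this by invoking Lemma~\ref{lem:supplement}, since its proof uses Proposition~\ref{prop:dan} (to identify $A_\Omega(\Lambda)$ with $c_k(X_\Omega)$), so that would be circular. Producing a degree-$2k$ cycle of action $\approx M$ from Lemma~\ref{lem:perturbation} alone, without control of the differential, is precisely the issue that Conjecture~\ref{conj:chainmap} and Proposition~\ref{prop:significance} are designed to handle; absent that input (or the external results the paper cites), your upper bound does not close.
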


\begin{proof}
A special case of \cite[Cor.\ A.5]{dcg} (which applies to some more general domains) asserts that \eqref{eqn:dan} holds where the minimum is taken over a larger set of paths, let us call these ``generalized convex integral paths'',  which are defined as in Definition~\ref{def:cip}, except that the angle of the tangent vector can range over the interval $(\pi/2,-\pi)$, instead of just $[0,-\pi/2]$. Consequently, it is enough to show that for our domain $X_\Omega$, the minimum over generalized convex integral paths is the same as the minimum over convex integral paths. For any generalized convex integral path, one can replace all initial edges with positive angle by a single horizontal edge with the same total horizontal displacement (which adds some lattice points to the enclosed region); and one can likewise replace all final edges with angle less than $-\pi/2$ by a single vertical edge. This does not change $A_\Omega$. One can then ``round corners'' as in \cite[\S1.3.3]{t3} to reduce the number of lattice points to what it was before; this decreases $A_\Omega$ similarly to \cite[Lem.\ 2.14]{t3}. Thus every generalized convex integral path $\Lambda$ can be replaced by a convex integral path $\Lambda'$ with $L(\Lambda')=L(\Lambda)$ and $A_\Omega(\Lambda')\le A_\Omega(\Lambda)$.

Another way to prove \eqref{eqn:dan}, pointed out to the author by K.\ Choi and V.\ Ramos, is as follows. Let $\Omega'$ be a translate of $\Omega$ in the first quadrant which does not intersect the axes. As explained in \cite{lmt}, a version of the Traynor trick from \cite{traynor} can be used to show that for any $\epsilon>0$, there exist symplectic embeddings
\[
X_{(1+\epsilon)^{-1}\Omega} \to X_{\Omega'} \to X_{(1+\epsilon)\Omega}.
\]
It then follows from the Monotonicity and Conformality axioms of ECH capacities, see \cite[\S1.2]{bn}, that
\[
c_k(X_\Omega) = c_k(X_{\Omega'})
\]
for all $k$. The ECH capacities of $X_{\Omega'}$ are computed by \cite[Thm.\ 4.14]{bn}. One can then deduce \eqref{eqn:dan} from the latter theorem similarly to the previous paragraph.
\end{proof}

We will also need the following fact about minimizers in the above formula:

\begin{lemma}
\label{lem:dan'}
Suppose that $\Lambda$ is a convex generator such that $I(\Lambda)=2k$ and $\Lambda$ is minimal for $X_\Omega$. Then $\Lambda$ uniquely minimizes $A_\Omega$ among all convex generators with $I=2k$.
\end{lemma}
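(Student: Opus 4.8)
The plan is to show that the unique minimizer of $A_\Omega$ among convex integral paths with $L = k+1$, which is $\Lambda$ by the definition of minimality plus Proposition~\ref{prop:dan}, also beats every convex \emph{generator} with $I = 2k$ that has at least one edge labeled `$h$'. So suppose $\Lambda''$ is a convex generator with $I(\Lambda'') = 2k$ and $h(\Lambda'') > 0$. Let $\Lambda'$ be the underlying convex integral path of $\Lambda''$ (i.e.\ forget the `$e$'/`$h$' labels), viewed as a convex generator with all edges labeled `$e$'. Since the action $A_\Omega$ depends only on the edges and not on their labels, $A_\Omega(\Lambda'') = A_\Omega(\Lambda')$, so it suffices to show $A_\Omega(\Lambda') > A_\Omega(\Lambda)$, or at least $A_\Omega(\Lambda') \ge A_\Omega(\Lambda)$ with equality only when $\Lambda' = \Lambda$.

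The key point is the ECH index bookkeeping. From \eqref{eqn:ILambda}, $I(\Lambda') = 2(L(\Lambda') - 1)$ because all edges of $\Lambda'$ are labeled `$e$', while $I(\Lambda'') = 2(L(\Lambda') - 1) - h(\Lambda'')$ since $\Lambda''$ and $\Lambda'$ enclose the same region. Hence $2k = I(\Lambda'') = I(\Lambda') - h(\Lambda'')$, so $L(\Lambda') = k + 1 + h(\Lambda'')/2 > k+1 = L(\Lambda)$. Now I would invoke the following monotonicity fact, essentially the content of Lemma~\ref{lem:action} and its proof: if $\Lambda'$ is a convex integral path with $L(\Lambda') > L(\Lambda)$ and $\Lambda$ uniquely minimizes $A_\Omega$ among convex integral paths with $L = L(\Lambda)$, then $A_\Omega(\Lambda') > A_\Omega(\Lambda)$. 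Indeed, by Proposition~\ref{prop:dan} we have $A_\Omega(\Lambda') \ge c_{L(\Lambda')-1}(X_\Omega) \ge c_k(X_\Omega) = A_\Omega(\Lambda)$, using that ECH capacities are nondecreasing in $k$; and one must upgrade the last $\ge$ to a strict inequality. For this one can argue directly: a convex integral path with strictly more enclosed lattice points than the minimizer $\Lambda$ must have a tangent line (in any fixed rational slope direction) strictly to the upper right of the corresponding tangent line for $\Lambda$ — otherwise it would be contained in the region enclosed by $\Lambda$ up to the relevant lattice structure and could not enclose more points — and then the action, which by \eqref{eqn:ALambda} is computed from these tangent lines, is strictly larger. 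This is exactly the mechanism used in the proof of Lemma~\ref{lem:action}(a).

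Finally I would combine the two cases: if $\Lambda''$ is a convex generator with $I(\Lambda'') = 2k$ and all edges labeled `$e$', then $L(\Lambda'') = k+1$, so by minimality $A_\Omega(\Lambda'') \ge A_\Omega(\Lambda)$ with equality iff $\Lambda'' = \Lambda$; and if $\Lambda''$ has some `$h$' edge, the paragraph above gives $A_\Omega(\Lambda'') > A_\Omega(\Lambda)$. Either way $\Lambda$ is the unique minimizer of $A_\Omega$ among all convex generators with $I = 2k$. The main obstacle is making the strict inequality $A_\Omega(\Lambda') > A_\Omega(\Lambda)$ genuinely rigorous: one needs to know that adding lattice points to the enclosed region strictly increases $A_\Omega$, which requires the geometric observation about tangent lines moving strictly outward rather than a purely combinatorial argument — but this is precisely the kind of statement already established in the proof of Lemma~\ref{lem:action}, so it should be a short step given that lemma.
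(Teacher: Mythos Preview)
Your argument has a genuine gap at the strict inequality $A_\Omega(\Lambda') > A_\Omega(\Lambda)$ when $L(\Lambda') > L(\Lambda)$. The tangent-line claim you make is not correct: it is \emph{not} true that a convex integral path enclosing strictly more lattice points has \emph{every} tangent line strictly to the upper right of the corresponding tangent line of $\Lambda$ (compare $e_{1,0}^{10}$, with horizontal tangent line $y=0$, to $e_{1,0}^2 e_{0,1}^2$, with horizontal tangent line $y=2$). What follows from your containment argument is only that \emph{some} tangent line moves outward, and that alone does not control $A_\Omega$ for a general convex toric domain. The mechanism in Lemma~\ref{lem:action}(a) works precisely because for an ellipsoid $E(a,b)$ the action is determined by a \emph{single} tangent line of fixed slope $-b/a$; this is a special feature of ellipsoids and does not transfer to arbitrary $\Omega$. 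So the appeal to Lemma~\ref{lem:action} does not close the gap.

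The paper's proof is both different and shorter: instead of forgetting labels and comparing to $\Lambda$, it starts from any convex generator with $I=2k$ having at least one `$h$' edge (hence at least two, by parity of $I$), ``rounds a corner'' to reduce $L$ by one, and simultaneously relabels two `$h$' edges to `$e$'. This produces a convex generator with the same index $2k$ and \emph{strictly} smaller action; the strict decrease of $A_\Omega$ under corner-rounding is exactly the content of \cite[Lem.~2.14]{t3}. This corner-rounding step is the ingredient you are missing. If you were to use it to repeatedly reduce $L(\Lambda')$ down to $k+1$, you would recover the strict inequality you want --- but at that point you are essentially carrying out the paper's argument with an extra detour through forgetting labels.
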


\begin{proof}
It is enough to show that if $\Lambda$ is a convex generator with $I(\Lambda)=2k$ such that not all edges of $\Lambda$ are labeled `$e$', then there exists a convex generator $\Lambda'$ with $I(\Lambda')=2k$ and $A_\Omega(\Lambda')<A_\Omega(\Lambda)$. Since $I(\Lambda)$ is even, it follows from the index formula \eqref{eqn:ILambda} that at least two edges of $\Lambda$ are labeled `$h$'. One can then ``round a corner'' of $\Lambda$ as in \cite[\S1.3.3]{t3} to reduce $L(\Lambda)$ by $1$, and one can also reduce the number of `$h$' labels by $2$. This will preserve the ECH index $I$ by \eqref{eqn:ILambda}, and decrease the symplectic action $A_\Omega$ similarly to \cite[Lem.\ 2.14]{t3}.
\end{proof}

\begin{proof}[Proof of Lemma~\ref{lem:supplement}.]
Write $I(\Lambda)=2k$. By Lemma~\ref{lem:dan'}, we can choose $\delta>0$ such that every convex generator $\Lambda'\neq\Lambda$ with $I(\Lambda')=2k$ has action
\[
A_\Omega(\Lambda') > A_\Omega(\Lambda) + \delta.
\]
We can assume that $\epsilon<\delta/2$, so that by condition (ii) in Lemma~\ref{lem:perturbation}(d), if $\Lambda'$ is any convex generator with $A_\Omega(\Lambda')<L$ then
\[
|\mc{A}(\imath(\Lambda')) - A_\Omega(\Lambda')| < \delta/2.
\]
If $\Lambda'\neq\Lambda$ is any convex generator with $A_\Omega(\Lambda')<L$ and $I(\Lambda')=2k$, then it follows from the above that
\begin{equation}
\label{eqn:adelta1}
\mc{A}(\imath(\Lambda')) > A_\Omega(\Lambda) + \delta/2.
\end{equation}

Now by Lemma~\ref{lem:perturbation}(b) and \eqref{eqn:cklimit}, if $\epsilon$ is sufficiently small then
\[
|c_k(Y,\lambda) - c_k(X_\Omega)| < \delta/2.
\]
Since $A_\Omega(\Lambda) = c_k(X_\Omega)$ by Proposition~\ref{prop:dan}, this implies that
\begin{equation}
\label{eqn:adelta2}
c_k(Y,\lambda) < A_\Omega(\Lambda) + \delta/2.
\end{equation}

By the definition of $c_k$, there is a cycle in $ECC^L(Y,\lambda,0,J)$ which is a linear combination of admissible orbit sets $\alpha$ with $I(\alpha)=2k$ and $\mc{A}(\alpha)\le c_k(Y,\lambda)$, and which represents a nontrivial homology class in $ECH(Y,\lambda,0)$. But by \eqref{eqn:adelta1} and \eqref{eqn:adelta2}, $\alpha=\imath(\Lambda)$ is the only admissible orbit set for $\lambda$ such that $I(\alpha) = 2k$ and $\mc{A}(\alpha) \le c_k(Y,\lambda)$. Hence this cycle must be $\imath(\Lambda)$.
\end{proof}

\subsection{Proof of the perturbation lemma}
\label{sec:ppl}

\begin{proof}[Proof of Lemma~\ref{lem:perturbation}.]
This is similar to the proof of \cite[Lem.\ 3.3]{concave}. We proceed in five steps.

{\em Step 1.\/}
For reasons to be explained in Step 5, we can assume without loss of generality that the function $f:[0,A]\to\R^{>0}$ defining $\Omega$ in \eqref{eqn:Omega} has the following three properties: First, $f'(0)$ is irrational and
\begin{equation}
\label{eqn:fprime0}
|f'(0)|<f(0)/L.
\end{equation}
Second, $f'(A)$ is irrational and
\begin{equation}
\label{eqn:fprimeA}
|f'(A)|>L/A.
\end{equation}
Third, $f''(t)<0$ except for $t$ in connected neighborhoods of $0$ and $A$.

{\em Step 2.\/}
We now list the embedded Reeb orbits of $\lambda_{std}|_Y$. Similarly to \cite[\S4.3]{bn}, these are described as follows. Let $\mu:\C^2\to\R^2$ denote the moment map $\mu(z)=(\pi|z_1|^2,\pi|z_2|^2)$.
\begin{itemize}
\item
The circle
\[
e_{1,0}=\mu^{-1}(0,f(0))
\]
is an elliptic Reeb orbit with action
\begin{equation}
\label{eqn:Ae10}
\mc{A}(e_{1,0})=f(0)
\end{equation}
and rotation angle
\begin{equation}
\label{eqn:theta0}
\theta = |f'(0)|\mod 1.
\end{equation}
Note that \eqref{eqn:fprime0}, \eqref{eqn:Ae10}, and \eqref{eqn:theta0} imply that $e_{1,0}$ is $L$-positive.
\item
The circle
\[
e_{0,1}=\mu^{-1}(A,0)
\]
is an elliptic Reeb orbit with action
\begin{equation}
\label{eqn:Ae01}
\mc{A}(e_{0,1})=A
\end{equation}
and rotation angle
\begin{equation}
\label{eqn:thetaA}
\theta = |1/f'(A)| \mod 1.
\end{equation}
Note that \eqref{eqn:fprimeA}, \eqref{eqn:Ae01}, and \eqref{eqn:thetaA} imply that $e_{0,1}$ is $L$-positive.
\item
For each $t\in(0,A)$ such that $f'(t)$ is rational, if we write $f'(t)=-b/a$ where $a,b$ are relatively prime positive integers, then the torus
\[
T_{a,b} = \mu^{-1}(t,f(t))
\]
is foliated by an $S^1$-family of embedded Reeb orbits, and each Reeb orbit $\gamma$ in this family has action
\[
\mc{A}(\gamma) = af(t)+bt.
\]
We denote the above quantity by $\mc{A}(T_{a,b})$.
\end{itemize}

{\em Step 3.\/}
Given $\delta>0$, we can perturb $\lambda_{std}|_Y$ to a nondegenerate contact form $\lambda=f(\lambda_{std}|_Y)$ with $\|f-1\|_{C^0}<\delta$ such that:
\begin{itemize}
\item $\lambda$ agrees with $\lambda_{std}|_Y$ near the Reeb orbits $e_{1,0}$ and $e_{0,1}$.
\item Each circle of Reeb orbits $T_{a,b}$ with $\mc{A}(T_{a,b})<L$ is replaced by an $L$-positive elliptic orbit $e_{a,b}$ and a positive hyperbolic orbit $h_{a,b}$.
\item
The orbits $e_{a,b}$ and $h_{a,b}$ both have action\footnote{The action of $e_{a,b}$ is necessarily greater than that of $h_{a,b}$, because for a suitable $\lambda$-admissible $J$ there are two $J$-holomorphic cylinders from $e_{a,b}$ to $h_{a,b}$.} less than $L$ and within $\delta$ of $\mc{A}(T_{a,b})$.
\item
$\lambda$ has no other embedded Reeb orbits of action less than $L$.
\end{itemize}
This perturbation is just like the one in the proof of \cite[Lem.\ 3.3]{concave} for concave toric domains, except that in \cite{concave} the elliptic orbits $e_{a,b}$ are $L$-negative instead of $L$-positive.

To clarify why our elliptic orbit $e_{a,b}$ arising from $T_{a,b}$ is $L$-positive, note that since $f''(t)<0$ away from $0$ and $A$, the linearized return map of each orbit in $T_{a,b}$ has the form $\begin{pmatrix}1 & 0 \\ 1 & 1\end{pmatrix}$ in some symplectic basis. The linearized return map of $e_{a,b}$ is a perturbation of this matrix, so the rotation angle of $e_{a,b}$ is positive, and can be made arbitrarily small by choosing the perturbation sufficiently small. In particular, if this rotation angle is less than $L/(\mc{A}(T_{a,b})-\delta)$, then $e_{a,b}$ will be $L$-positive.

The map $\imath$ from extended convex generators to orbit sets is now defined in the obvious way suggested by the notation. Namely, if $\Lambda$ is an extended convex generator, represented as a formal product of symbols $e_{a,b}$ and $h_{a,b}$ raised to various exponents, then $\imath(\Lambda)$ is the same formal product, now regarded as representing an orbit set involving the Reeb orbits $e_{a,b}$ and $h_{a,b}$. If $\delta$ is chosen sufficiently small with respect to $\epsilon$, then $\imath$ is a well defined bijection as in (d) satisfying conditions (i) and (ii).

{\em Step 4.\/} Under the assumptions in Step 1, the contact form $\lambda$ satisfies conditions (a), (b), and (c), and we have a bijection $\imath$ as in (d) satisfying conditions (i) and (ii). We now prove conditions (iii) and (iv).

We have
\[
I(\alpha) = c_\tau(\alpha) + Q_\tau(\alpha) + CZ_\tau^I(\alpha)
\]
where $c_\tau(\alpha)$ is short for $c_\tau(\alpha,\emptyset,Z)$ and $Q_\tau(\alpha)$ is short for $Q_\tau(\alpha,\emptyset,Z)$, where $Z$ is the unique class in $H_2(Y,\alpha,\emptyset)$. We can choose the trivialization $\tau$ so that $CZ_\tau(h_{a,b})=0$ and $CZ_\tau(e_{a,b})=1$. We saw in Steps 2 and 3 that all embedded elliptic orbits $e_{a,b}$ with action less than $L$ are $L$-positive. It follows that if $k$ is a positive integer such that $\mc{A}(e_{a,b}^k)<L$, then $CZ_\tau(e_{a,b}^k)=1$. Thus
\[
CZ_\tau^I(\alpha) = m(\Lambda) - h(\Lambda)
\]
where $m(\Lambda)$ denotes the total multiplicity of all edges of $\Lambda$.
Similarly to \cite[Eq.\ (3.14)]{concave}, we have
\begin{equation}
\label{eqn:cta}
c_\tau(\alpha) = x(\Lambda) + y(\Lambda).
\end{equation}
We also have
\[
Q_\tau(\alpha) = 2\op{Area}(R)
\]
where $R$ denotes the region bounded by $\Lambda$ and the axes. This is proved similarly\footnote{The only difference is that in our situation, in the equation above \cite[Eq.\ (3.14)]{concave}, `min' needs to be replaced by `max'.} to the equation below \cite[Eq.\ (3.14)]{concave}. Finally, Pick's formula for the area of a lattice polygon implies that
\[
2\op{Area}(R) = 2L(\Lambda) - m(\Lambda) - x(\Lambda) - y(\Lambda) - 2
\]
where $L(\Lambda)$ denotes the number of lattice points in $R$, including on the boundary. Combining the above five equations gives
\[
I(\alpha) = 2(L(\Lambda)-1)- h(\Lambda),
\]
which proves (iii).

By the definition of $J_0$ in \eqref{eqn:defJ0} and \eqref{eqn:absJ0}, we have
\[
J_0(\alpha) = I(\alpha) - 2c_\tau(\alpha) - e(\alpha).
\]
Combining this with (iii) and equation \eqref{eqn:cta} proves (iv).

{\em Step 5.\/} We now explain why we can make the assumptions in Step 1 without loss of generality.

Let $X_{\Omega}$ be any smooth convex toric domain, with boundary $Y$ and defining function $f$. For any $\delta>0$ there is a smooth convex toric domain $X_{\Omega_0}$ satisfying the assumptions in Step 1 with defining function $f_0$ such that $\|f-f_0\|_{C^0}<\delta$. The flow of the Liouville vector field \eqref{eqn:radialvf} defines a diffeomorphism $Y\simeq Y_0=\partial X_{\Omega_0}$ with respect to which
\[
\|\lambda_{std}|_Y - \lambda_{std}|_{Y_0}\|_{C^0}<\epsilon/2
\]
if $\delta$ is sufficiently small. In addition, if $\Lambda$ is a convex generator with $A_\Omega(\Lambda)<L$, then
\[
|A_\Omega(\Lambda) - A_{\Omega_0}(\Lambda)|<\epsilon/2
\]
if $\delta$ is sufficiently small, cf.\ \cite[Lem.\ 2.4]{concave}. Thus, the lemma for $\Omega$ follows from the lemma for $\Omega_0$ with $\epsilon$ replaced by $\epsilon/2$.
\end{proof}

\section{Proof of the main theorem}
\label{sec:prove}

\begin{proof}[Proof of Theorem~\ref{thm:convex}.] We proceed in five steps.

{\em Step 1:\/} We begin with some geometric setup.

We can assume, by slightly enlarging $X_{\Omega'}$ and shrinking $X_\Omega$, that $X_\Omega$ and $X_{\Omega'}$ are smooth as in Definition~\ref{def:smooth} and that there is a symplectic embedding $\varphi:X_\Omega\to\op{int}(X_{\Omega'})$. (The theorem for the original symplectic embedding then follows by a limiting argument.)

Let $Y$ and $Y'$ denote the boundaries of $X_\Omega$ and $X_{\Omega'}$ respectively. Then $X_{\Omega'}\setminus\varphi(\op{int}(X_\Omega))$ is a weakly exact symplectic cobordism from $(Y',\lambda_{std}|_{Y'})$ to $(Y,\lambda_{std}|_Y)$, see Definition~\ref{def:we}.

Choose $L>A_{\Omega'}(\Lambda')$ and $\epsilon>0$. Let $\lambda$ and $\lambda'$ be contact forms on $Y$ and $Y'$ respectively provided by Lemma~\ref{lem:perturbation}. If $\epsilon$ is sufficiently small, then there is also a weakly exact symplectic cobordism $(X,\omega)$, with $X$ diffeomorphic to $[0,1]\times S^3$, from $(Y',\lambda')$ to $(Y,\lambda)$, because multiplying the contact form $\lambda_{std}|_Y$ by a function $f$ on $Y$ is equivalent to moving $Y$ by the time $\log(f)$ flow of the Liouville vector field \eqref{eqn:radialvf}.

Let $J_+$ be a generic $\lambda'$-compatible almost complex structure on $\R\times Y'$, let $J_-$ be a generic $\lambda$-compatible almost complex structure on $\R\times Y$, and let $J$ be a generic cobordism-admissible almost complex structure on $\overline{X}$ restricting to $J_+$ and $J_-$ on the ends.

{\em Step 2:\/} We now show that the cobordism $(X,\omega,J)$ is $L$-tame, see Definition~\ref{def:Ltame}.

Let $d$ be a positive integer, let $\alpha_+$ be an orbit set for $\lambda'$ with $\mc{A}(\alpha_+)<L/d$, let $\alpha_-$ be an orbit set for $\lambda$ with $\mc{A}(\alpha_-)<L/d$, and let $C$ be an irreducible embedded curve in $\mc{M}^J(\alpha_+,\alpha_-)$ with $I(dC)\le 0$. We need to prove the inequality \eqref{eqn:Ltame}.

Suppose to get a contradiction that
\begin{equation}
\label{eqn:notLtame}
2g(C) + \op{ind}(C) + h(C) + 2e_L(C) \le 1.
\end{equation}
Since $J$ is generic, $\op{ind}(C)\ge 0$. By \eqref{eqn:ind}, the parity of $\op{ind}(C)$ equals the number of ends of $C$ at positive hyperbolic orbits, which equals $h(C)$ since $\alpha_+$ and $\alpha_-$ contain no negative hyperbolic orbits by Lemma~\ref{lem:perturbation}(c). Thus the inequality \eqref{eqn:notLtame} forces
\begin{equation}
\label{eqn:forces}
g(C) = \op{ind}(C) = h(C) = e_L(C) = 0.
\end{equation}

By Lemma~\ref{lem:perturbation}(c), every embedded elliptic orbit in $\alpha_-$ is $L$-positive, so by \eqref{eqn:forces} there are no elliptic orbits in $\alpha_-$, and thus $\alpha_-$ is the empty set. It follows that $I(dC)=I(\alpha_+^d)$. If $\Lambda_+$ is the extended convex generator with $\imath(\Lambda_+)=\alpha_+^d$, then by Lemma~\ref{lem:perturbation}(d), $I(\alpha_+^d)=I(\Lambda_+)$. But it follows from the definition of the ECH index of an extended convex generator in \S\ref{sec:perturbation} that $I(\Lambda_+)\ge 0$, with equality if and only if $\alpha_+^d$ is the empty set. But $\alpha_+^d$ is not the empty set since $C$ is nonempty. Thus $I(dC)> 0$, contradicting our hypothesis.

{\em Step 3:\/} We now show that there exists a $J$-holomorphic current
\[
\current\in\mc{M}^J(\imath(\Lambda'),\imath(\Lambda))
\]
for some convex generator $\Lambda$ with $I(\Lambda)=I(\Lambda')$ and $A_\Omega(\Lambda')<L$.

Let
\[
\phi: ECC^L(Y',\lambda',0,J_+) \longrightarrow ECC^L(Y,\lambda,0,J_-)
\]
be a chain map provided by Theorem~\ref{thm:sw}(c). By Lemma~\ref{lem:supplement}, we can choose $\epsilon$ sufficiently small so that $\alpha'=\imath(\Lambda')$ is a cycle in $ECC^L(Y',\lambda',0,J_+)$ which represents a nontrivial homology class in $ECH(Y',\lambda',0)$. Then $\phi(\alpha')\neq 0$ by Theorem~\ref{thm:sw}(b). Hence by Theorem~\ref{thm:sw}(c)(i), there is an admissible orbit set $\alpha$ for $\lambda$ with $\mc{A}(\alpha) < L$, and a broken holomorphic current
\[
\mc{B} = (\current({N_-}),\ldots,\current({N_+}))
\]
from $\alpha'$ to $\alpha$. By Theorem~\ref{thm:sw}(c)(ii), we have $I(\mc{B})=0$. Equivalently, if $\Lambda$ is the convex generator with $\alpha=\imath(\Lambda)$, then $I(\Lambda)=I(\Lambda')$.

By Proposition~\ref{prop:lowI}, we know that $I(\current(i))>0$ if $i\neq 0$. By Step 2 and Proposition~\ref{prop:key}(a), we have $I(\current(0))\ge 0$. Since $\sum_iI(\current(i))=0$, it follows that $N_-=N_+=0$ and $I(\current(0))=0$. Thus $\current=\current(0)$ is the desired $J$-holomorphic current.

{\em Step 4:\/} Write $\current=\sum_k d_kC_k$. Let $\Lambda_k'$ and $\Lambda_k$ denote the convex generators for which $C_k\in\mc{M}^J(\imath(\Lambda_k'),\imath(\Lambda_k))$. We now show that $\Lambda_k\le_{\Omega,\Omega'}\Lambda_k'$ ``up to $\epsilon$'' for each $k$, namely
\begin{align}
\label{eqn:lt1}
I(\Lambda_k) &=I(\Lambda_k'),\\
\label{eqn:lt2}
A_{\Omega}(\Lambda_k) &\le A_{\Omega'}(\Lambda_k') + 2\epsilon,\\
\label{eqn:lt3}
x(\Lambda_k) + y(\Lambda_k) - h(\Lambda_k)/2 &\ge x(\Lambda_k') + y(\Lambda_k') + m(\Lambda_k') - 1.
\end{align}

Equation \eqref{eqn:lt1} follows from the first bullet of Proposition~\ref{prop:key}(b) and assertion (iii) in Lemma~\ref{lem:perturbation}(d).

To prove \eqref{eqn:lt2}, let $\alpha_\pm(k)$ denote the admissible orbit sets such that $\imath(\Lambda_k')=\alpha_+(k)$ and $\imath(\Lambda_k)=\alpha_-(k)$. The existence of the curve $C_k\in\mc{M}^J(\alpha_+(k),\alpha_-(k))$, and the weak exactness of the cobordism, imply that $\mc{A}(\alpha_+(k)) \ge \mc{A}(\alpha_-(k))$, by a Stokes' theorem calculation in the proof of \cite[Thm.\ 2.3]{qech}. Equation \eqref{eqn:lt2} then follows from Lemma~\ref{lem:perturbation}(d)(ii).

To prove \eqref{eqn:lt3}, we apply Proposition~\ref{prop:J0} to $C_k$. By the definition of $J_0$, we have
\[
J_0(C_k) = J_0(\alpha_+(k)) - J_0(\alpha_-(k)),
\]
see \cite[Prop.\ 6.5(a)]{ir}. Then by Lemma~\ref{lem:perturbation}(d)(iv) and equation \eqref{eqn:lt1}, we have
\begin{equation}
\label{eqn:J0comb}
J_0(C_k) = -2x(\Lambda_k') - 2y(\Lambda_k') - e(\Lambda_k') + 2x(\Lambda_k) + 2y(\Lambda_k) + e(\Lambda_k).
\end{equation}
It follows from the necessary conditions for equality in the index inequality \eqref{eqn:indineq}, see \cite[\S3.9]{bn}, that the positive ends of $C_k$ are all at simple Reeb orbits, and $C_k$ has no two negative ends at covers of the same Reeb orbit.
Since all edges of $\Lambda_k'$ are labeled `$e$', it follows that in the notation of Proposition~\ref{prop:J0} applied to $C_k$, we have
\[
\sum_i(2n_i^+-1) = 2m(\Lambda_k') - e(\Lambda_k')
\]
and
\[
\sum_j(2n_j^--1) = e(\Lambda_k) + h(\Lambda_k).
\]
Thus Proposition~\ref{prop:J0} and the fact that $g(C_k)\ge 0$ imply that
\begin{equation}
\label{eqn:J0geom}
J_0(C_k) \ge -2 + 2m(\Lambda_k') - e(\Lambda_k') + e(\Lambda_k) + h(\Lambda_k).
\end{equation}
The inequality \eqref{eqn:lt3} then follows from \eqref{eqn:J0comb} and \eqref{eqn:J0geom}.

{\em Step 5:\/} We now complete the proof.

By Step 4, if we factor $\Lambda'$ as the product over $k$ of $\Lambda_k'$ repeated $d_k$ times, and if we factor $\Lambda$ as the product over $k$ of $\Lambda_k$ repeated $d_k$ times, then these factorizations satisfy the conclusions of Theorem~\ref{thm:convex}, except that there is a $2\epsilon$ error in the action inequality \eqref{eqn:lt2}. Here the second and third bullets in Theorem~\ref{thm:convex} follow from the corresponding bullets in Proposition~\ref{prop:key}(b).

Since $\epsilon>0$ can be arbitrarily small, and since there are only finitely many possibilities for $\Lambda$ and the factorizations, it follows by a limiting argument that there exist $\Lambda$ and factorizations which satisfy the conclusions of Theorem~\ref{thm:convex} without any error.
\end{proof}

\begin{appendix}

\section{Conjectural improvement of the main theorem}
\label{app:conjecture}

We now state a conjecture regarding the differential on the ECH chain complex for the boundary of a (perturbed) convex toric domain, which implies an improved version of Theorem~\ref{thm:convex}.

\begin{definition}
A {\em torus generator\/} is a closed convex polygon in $\R^2$ with vertices in $\Z^2$, modulo translation, such that each edge is labeled `$e$' or `$h$'.
\end{definition}

Let $C_*^{T^3}$ denote the chain complex $\overline{C_*}(2\pi;0)$ defined in \cite[\S1.3]{t3}, tensored with $\Z/2$. The chain complex $C_*^{T^3}$ is freely generated over $\Z/2$ by torus generators. The differential of a torus generator is the sum over all torus generators obtainable by ``rounding a corner'' and ``locally losing one `$h$''', see \cite[\S1.3]{t3}.

\begin{definition}
If $\Lambda$ is a convex generator, define a torus generator $\tilde{\Lambda}$ by attaching to $\Lambda$ a vertical segment from $(0,0)$ to the upper left endpoint of $\Lambda$, and a horizontal segment from $(0,0)$ to the lower right endpoint of $\Lambda$. The new horizontal and vertical segments are labeled `$e$'; all other edge labels are preserved.
\end{definition}

\begin{conjecture}
\label{conj:chainmap}
Let $X_\Omega$ be a smooth convex toric domain with boundary $Y$. Then one can choose the contact form $\lambda$ on $Y$ in Lemma~\ref{lem:perturbation}, and a generic $\lambda$-compatible almost complex structure $J$ on $\R\times Y$, so that the linear map
\begin{equation}
\label{eqn:chainmap}
ECC^L(Y,\lambda,0,J) \longrightarrow C_*^{T^3},
\end{equation}
sending the admissible orbit set $\imath(\Lambda)$ to the torus generator $\tilde{\Lambda}$, is a chain map.
\end{conjecture}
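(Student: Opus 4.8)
The plan is to show that the linear map $\Phi$ of \eqref{eqn:chainmap}, $\imath(\Lambda)\mapsto\tilde\Lambda$, intertwines the ECH differential $\partial$ on $ECC^L(Y,\lambda,0,J)$ with the combinatorial differential $\partial^{T^3}$ on $C_*^{T^3}$. Since both differentials are determined by their matrix coefficients on generators, it suffices to prove two things. First, a bookkeeping statement: whenever $\tilde\Lambda''$ is obtained from $\tilde\Lambda$ by rounding a single corner and locally losing one `$h$', the rounded polygon is again of the form $\widetilde{\Lambda''}$ for a convex generator $\Lambda''$, and $A_\Omega(\Lambda'')\le A_\Omega(\Lambda)<L$, so that $\partial^{T^3}\circ\Phi$ and $\Phi\circ\partial$ land in the same truncation of $C_*^{T^3}$; here one checks that a corner rounding never destroys the vertical and horizontal axis segments of $\tilde\Lambda$ and never raises the action. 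Second, and substantively, one must show that for each pair $\Lambda,\Lambda''$ with $A_\Omega(\Lambda),A_\Omega(\Lambda'')<L$ the mod $2$ count
\[
\#\frac{\mc{M}^J(\imath(\Lambda),\imath(\Lambda''))}{\R}
\]
of ECH-index-$1$ $J$-holomorphic currents equals $\langle\partial^{T^3}\tilde\Lambda,\tilde\Lambda''\rangle$.

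The first step toward this is to pin down the combinatorial support of $\partial$, i.e.\ to show that a nonempty $\mc{M}^J(\imath(\Lambda),\imath(\Lambda''))/\R$ of ECH index $1$ forces $(\tilde\Lambda,\tilde\Lambda'')$ to be related by one corner rounding. By Proposition~\ref{prop:lowI} such a current is $C\cup(\text{trivial cylinders})$ with $C$ embedded and $\op{ind}(C)=I(C)=1$, the trivial cylinders covering the orbits common to $\imath(\Lambda)$ and $\imath(\Lambda'')$. The index formula \eqref{eqn:ILambda} gives $2(L(\Lambda)-L(\Lambda''))=1+(h(\Lambda)-h(\Lambda''))$, so $h$ drops by an odd amount and $L$ by at least $1$. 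Applying Proposition~\ref{prop:J0} to $C$ together with Lemma~\ref{lem:perturbation}(d)(iv), exactly as in Step~4 of the proof of Theorem~\ref{thm:convex}, bounds $2g(C)-2$ plus the end contributions by $J_0(\imath(\Lambda))-J_0(\imath(\Lambda''))$; since $\op{ind}(C)=I(C)$, the partitions of the ends of $C$ are the ones forced in the equality case of the index inequality, see \cite[\S3.9]{bn}. Feeding these constraints into the same kind of elementary inequalities used in \S\ref{sec:applying} and \S\ref{sec:prove}, now sharpened to the regime $I=1$, should force $L$ and $h$ each to drop by exactly $1$ and force $\Lambda''$ to coincide with $\Lambda$ outside a neighborhood of a single vertex, with the local change there being precisely a corner rounding.

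The second, harder step is the actual count. The natural route is to take $\lambda$ to be a small Morse--Bott perturbation of $\lambda_{std}|_Y$ supported near the finitely many tori $T_{a,b}$ with $\mc{A}(T_{a,b})<L$, as in the proof of Lemma~\ref{lem:perturbation}, and $J$ close to an almost complex structure adapted to the toric fibration. Then the index-$1$ curve $C$ realizing a given corner rounding localizes near the one or two tori involved in that corner, and its mod $2$ count is governed by the local Morse--Bott / obstruction-bundle-gluing analysis carried out by Hutchings--Sullivan in \cite{t3} (and in the companion computation of $ECH(T^3)$): there the count of curves effecting a single corner rounding is $1\bmod 2$ for generic admissible $J$, which is exactly $\langle\partial^{T^3}\tilde\Lambda,\tilde\Lambda''\rangle$. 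Corner roundings at the two ends of $\Lambda$, where the relevant orbits are the honest circles $e_{1,0},e_{0,1}$ of tiny positive rotation angle rather than perturbed tori, must be treated separately; but the local model is the same, since the adjacent horizontal or vertical edge carries no `$h$', so the rounding loses its `$h$' on the interior edge and reduces to the standard picture with an elliptic orbit of small rotation angle.

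The main obstacle is precisely this last step: a complete classification and mod-$2$ count of the ECH-index-$1$ curves. One must control the multiply covered components that appear in the Morse--Bott limit (where obstruction bundles enter), and one must prove that no ``long'' curve wandering through several of the tori $T_{a,b}$ contributes; the $J_0$ and genus bounds of Proposition~\ref{prop:J0} should exclude such curves, but pushing them to the sharp case $I=1$ near the special orbits $e_{1,0},e_{0,1}$ has not been carried out. This is why the statement remains conjectural: a full proof amounts to extending the differential computation of \cite{t3} from $T^3$ to the perturbed boundary of an arbitrary convex toric domain, which would require either new gluing input near the two added elliptic orbits or a chain-level comparison argument identifying $ECC^L(Y,\lambda,0,J)$ with a truncation of $C_*^{T^3}$.
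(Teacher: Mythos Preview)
The statement you are addressing is a \emph{conjecture}, and the paper does not prove it; the only remark the paper makes is that one expects a proof ``by modifying the arguments in \cite[\S11]{t3} and quoting results of Taubes \cite{beast1,beast2}''. Your proposal is therefore not to be compared against a proof in the paper, but against this one-line expectation.

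Viewed in that light, your outline is reasonable and in fact spells out in more detail precisely the strategy the paper gestures at: reduce the chain-map property to a combinatorial support statement plus a mod $2$ count, localize the index-$1$ curves near the relevant tori using a Morse--Bott perturbation, and invoke the $T^3$ computation of \cite{t3} for the count. You also correctly flag the genuine obstacles---controlling multiply covered components in the Morse--Bott limit, excluding ``long'' curves, and handling the two special elliptic orbits $e_{1,0}$ and $e_{0,1}$---and you explicitly conclude that the statement remains conjectural for these reasons. That is an honest and accurate assessment.

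One small correction: in your first ``combinatorial support'' step you assert that the $J_0$ bound together with the equality conditions in the index inequality will force $L$ and $h$ each to drop by exactly $1$ and localize the change at a single vertex. This is plausible but is not actually carried out anywhere in the paper (the analogous arguments in \S\ref{sec:prove} are for $I=0$ curves in a cobordism, not $I=1$ curves in a symplectization), and it is part of what would need to be proved; you should not present it as if it were routine. Otherwise your summary of what is known and what is missing matches the paper's own stance.
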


We expect that this conjecture can be proved by modifying the arguments in \cite[\S11]{t3} and quoting results of Taubes \cite{beast1,beast2}. The significance of this conjecture is as follows:

\begin{proposition}
\label{prop:significance}
Assume Conjecture~\ref{conj:chainmap}. If $\Lambda$ is any convex generator in which all edges are labeled `$e$', and if $L>A_\Omega(\Lambda)$, then $\lambda$ and $J$ in Lemma~\ref{lem:perturbation} can be chosen so that $\imath(\Lambda)$ is a cycle in the chain complex $ECC^L(Y,\lambda,0,J)$ which represents a nontrivial homology class in $ECH(Y,\lambda,0)$.
\end{proposition}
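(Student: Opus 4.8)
The plan is to transport the statement, via Conjecture~\ref{conj:chainmap}, into the combinatorial complex $C_*^{T^3}$, where the all-`$e$' hypothesis makes the cycle property transparent and where Lemma~\ref{lem:supplement} can be imported from an auxiliary domain.

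\emph{Setup and choice of $L$.} Write $I(\Lambda)=2k$; since every edge of $\Lambda$ is labeled `$e$' we have $h(\Lambda)=0$ and $k=L(\Lambda)-1$, and the torus generator $\tilde{\Lambda}$ likewise has every edge labeled `$e$'. For a convex generator, $I=2(L-1)-h\geq h$, so both $h$ and $L$ are bounded in terms of the ECH index; hence there are only finitely many convex generators of each index, and I would fix $L$ so large that (i) $L>A_\Omega(\Lambda)$, (ii) every convex generator of ECH index $2k-1$, $2k$, or $2k+1$ has $A_\Omega$-action below $L$, and (iii) the degree-$2k$ part of $ECH^L(Y,\lambda,0)\to ECH(Y,\lambda,0)$ is an isomorphism (which holds once the relevant chain groups consist entirely of orbit sets of action $<L$; the target is $\Z/2$ by \eqref{eqn:echs3}). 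Invoke Conjecture~\ref{conj:chainmap} with this $L$ to choose $\lambda$ and $J$ making $\Psi\colon ECC^L(Y,\lambda,0,J)\to C_*^{T^3}$, $\imath(\Lambda')\mapsto\tilde{\Lambda'}$, a chain map; decreasing the perturbation parameter $\epsilon$ of Lemma~\ref{lem:perturbation} further if needed, we keep the conclusion of Lemma~\ref{lem:supplement} available. The map $\Lambda'\mapsto\tilde{\Lambda'}$ is injective (a convex generator is recovered from its torus generator as the portion of the boundary between the corner where the attached horizontal and vertical edges meet and the opposite extreme vertices), so $\Psi$ is injective, hence a chain isomorphism onto the subcomplex $C^{\mathrm{good}}_*:=\Psi\big(ECC^L(Y,\lambda,0,J)\big)\subseteq C_*^{T^3}$.

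\emph{$\imath(\Lambda)$ is a cycle.} The differential of $C_*^{T^3}$ is a sum of ``round a corner and locally lose one `$h$'\,'' operations, so it kills every torus generator without an `$h$' edge; in particular $\partial^{T^3}\tilde{\Lambda}=0$. By Lemma~\ref{lem:perturbation}(d), $\partial\,\imath(\Lambda)$ is a $\Z/2$-combination of orbit sets $\imath(\Lambda'')$ with $I(\Lambda'')=2k-1$, and $\Psi(\partial\,\imath(\Lambda))=\partial^{T^3}\tilde{\Lambda}=0$. Since the $\tilde{\Lambda''}$ are distinct torus generators, all coefficients vanish, i.e.\ $\partial\,\imath(\Lambda)=0$.

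\emph{Nontriviality.} By (iii) it is enough to show $[\imath(\Lambda)]\neq 0$ in $ECH^L(Y,\lambda,0)=H_*(C^{\mathrm{good}}_*)$, i.e.\ $[\tilde{\Lambda}]\neq 0$ in $H_*(C^{\mathrm{good}}_*)$. The point is that an arbitrary convex generator $\Lambda$ with all edges `$e$' is minimal for some smooth convex toric domain $X_{\Omega_0}$: one takes $\partial\Omega_0$ to be a smooth convex curve hugging $\Lambda$ from outside, so that the support line of each slope occurring in $\Lambda$ lies just beyond the corresponding vertices; then $A_{\Omega_0}(\Lambda)$ is the strict minimum of $A_{\Omega_0}$ over convex integral paths with $L=k+1$, because any other such path has a vertex outside $\Omega_0$ and hence strictly larger action (the monotonicity of $A$ being as in \cite[Lem.\ 2.14]{t3}). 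Applying Conjecture~\ref{conj:chainmap} and Lemma~\ref{lem:supplement} to $X_{\Omega_0}$ with the same $L$ gives a chain isomorphism $\Psi_0\colon ECC^L(Y_0,\lambda_0,0,J_0)\xrightarrow{\cong}C^{\mathrm{good}}_*$ with $\Psi_0(\imath_0(\Lambda'))=\tilde{\Lambda'}$ — the target subcomplex is unchanged in degrees $2k-1,2k,2k+1$ by (ii) applied to both domains — while Lemma~\ref{lem:supplement} gives that $\imath_0(\Lambda)$ is nonzero in $ECH(Y_0,\lambda_0,0)$, hence in $ECH^L_{2k}$. Therefore $[\tilde{\Lambda}]=\Psi_0\big([\imath_0(\Lambda)]\big)\neq 0$, which is what was needed.

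\emph{The hard part.} The substantive step will be the assertion that every all-`$e$' convex generator is the unique $A_{\Omega_0}$-minimizer for a suitable smooth convex toric domain; this needs care with the degenerate purely horizontal or vertical generators and with the precise dictionary between $A_\Omega$ and the support lines of $\partial\Omega$. An equivalent but more hands-on alternative would be to prove directly that any two all-`$e$' torus generators enclosing the same number of lattice points are homologous within $C^{\mathrm{good}}_*$, via an explicit sequence of corner-roundings — morally the statement that $H_*(C^{\mathrm{good}}_*)$ is detected by enclosed area. The remaining ingredients (vanishing of $\partial^{T^3}$ on all-`$e$' generators, injectivity of $\Lambda\mapsto\tilde{\Lambda}$, and the finiteness bookkeeping with $L$) are routine.
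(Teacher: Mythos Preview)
Your argument that $\imath(\Lambda)$ is a cycle is correct and matches the paper's: both use that $\partial^{T^3}$ annihilates every all-`$e$' torus generator, together with injectivity of $\Lambda\mapsto\tilde\Lambda$.

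For nontriviality, however, the paper takes a much shorter route. It simply invokes \cite[Prop.~8.3]{t3}, which asserts that $\tilde\Lambda$ represents a nonzero class in $H_*(C_*^{T^3})$ (the \emph{full} combinatorial complex, not a subcomplex). Since $\Psi$ is a chain map into $C_*^{T^3}$, if $\imath(\Lambda)$ were a boundary in $ECC^L$ then $\tilde\Lambda$ would be a boundary in $C_*^{T^3}$; the cited result rules this out. A short direct-limit remark then upgrades from $ECH^L$ to $ECH$. No subcomplex identification, no auxiliary domain, and no analogue of your condition~(iii) is needed.

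Your detour through an auxiliary domain $X_{\Omega_0}$ for which $\Lambda$ is minimal is logically coherent, but it rests on the unproved claim that \emph{every} all-`$e$' convex generator is the unique $A_{\Omega_0}$-minimizer for some smooth convex toric domain. You flag this as ``the hard part,'' and indeed it is not covered by Lemma~\ref{lem:action}: for a general $\Lambda$ with several edge directions one must engineer $\partial\Omega_0$ so that the support lines in each direction are positioned to make $\Lambda$ beat every other path with the same lattice-point count, and it is not obvious this can always be done (nor is your sketch via ``hugging $\Lambda$ from outside'' a proof). Even granting it, you then need the two image subcomplexes $C^{\mathrm{good}}_*$ (for $\Omega$ and $\Omega_0$) to agree in the relevant degrees, which requires choosing $L$ large relative to \emph{both} action functionals---a wrinkle you only half-address. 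Your closing ``alternative'' of showing directly that any two all-`$e$' torus generators with the same enclosed lattice count are homologous in $C_*^{T^3}$ is precisely the content of \cite[Prop.~8.3]{t3}, so you would in effect be reproving the result the paper simply cites.
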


\begin{proof}
Suppose $\lambda$ and $J$ satisfy the conclusion of Conjecture~\ref{conj:chainmap}.
It follows from the definition of the differential on $C_*^{T^3}$ that $\tilde{\Lambda}$ is a cycle in $C_*^{T^3}$. Since the chain map \eqref{eqn:chainmap} is injective, it follows that $\imath(\Lambda)$ is a cycle in $ECC^L(Y,\lambda,0,J)$. It is shown in \cite[Prop.\ 8.3]{t3} that $\tilde{\Lambda}$ represents a nontrivial homology class in the homology of the chain complex $C_*^{T^3}$. Since \eqref{eqn:chainmap} is a chain map, it follows that $\imath(\Lambda)$ represents a nontrivial homology class in $ECH^L(Y,\lambda,0)$.

A direct limit argument then shows that by increasing $L$ in the above paragraph, we can ensure that $\imath(\Lambda)$ represents a nontrivial homology class in $ECH(Y,\lambda,0)$.
\end{proof}

Consequently, if Conjecture~\ref{conj:chainmap} is true, then in the statement of Theorem~\ref{thm:convex}, the assumption that $\Lambda'$ is minimal for $X_{\Omega'}$ can be weakened to the assumption that all edges of $\Lambda'$ are labeled `$e$'; because in Step 3 of the proof of Theorem~\ref{thm:convex}, one can use Proposition~\ref{prop:significance} in place of Lemma~\ref{lem:supplement} to conclude that $\imath(\Lambda')$ represents a nontrivial homology class in $ECH(Y',\lambda',0)$.

\paragraph{Acknowledgments.} I thank Keon Choi, Dan Cristofaro-Gardiner, David Frenkel, Richard Hind, Sam Lisi, Dusa McDuff, and Vinicius Ramos for helpful and inspiring discussions related to this topic. Part of this research was carried out at the Simons Center for Geometry and Physics in June 2014.

\end{appendix}

\end{document}